\newtheorem{thm}{Theorem}[section]
\newtheorem{prop}[thm]{Proposition}
\newtheorem{cor}[thm]{Corollary}
\newtheorem{note}[thm]{Note}
\newtheorem{lemma}[thm]{Lemma}
\newtheorem{defn}[thm]{Definition}
\newtheorem{preremark}[thm]{Remark}
\newenvironment{remark}{\begin{preremark}\rm}{\medskip \end{preremark}}
\numberwithin{equation}{section}
\newcommand{\norm}[1]{\left\Vert#1\right\Vert}
\newcommand{\abs}[1]{\left\vert#1\right\vert}
\newcommand{\Indicator}{{\mathbbm{1}}}
\newcommand{\Union}{\bigcup}
\newcommand{\intersect}{\cap}
\newcommand{\set}[1]{\left\{#1\right\}}
\newcommand{\R}{\mathbb R}
\newcommand{\al}{\alpha}
\newcommand{\del}{\delta}
\newcommand{\gam}{\gamma}
\newcommand{\eps}{\varepsilon}
\newcommand{\Lam}{\Lambda}
\newcommand{\lam}{\lambda}
\newcommand{\grad} {\nabla}
\newcommand{\dx} {\; \mathrm{d} x}
\newcommand{\dd} {\; \mathrm{d}}
\DeclareMathOperator*{\osc}{osc}
\newcommand{\commentout}[1]{}
\begin{document}

\title{Regularity for parabolic integro-differential equations with very irregular kernels}

\author{Russell W. Schwab}
\author{Luis Silvestre}

\address{Department of Mathematics\\
Michigan State University}
\email{rschwab@math.msu.edu}

\address{
Department of Mathematics\\
University of Chicago
}
\email{luis@math.uchicago.edu}

\date{\today}

\begin{abstract}
We prove H\"older regularity for a general class of parabolic integro-differential equations, which (strictly) includes many previous results.  We present a proof which avoids the use of a convex envelop as well as give a new covering argument which is better suited to the fractional order setting. Our main result involves a class of kernels which may contain a singular measure, may vanish at some points, and are not required to be symmetric. This new generality of integro-differential operators opens the door to further applications of the theory, including some regularization estimates for the Boltzmann equation.
\end{abstract}

\thanks{L. Silvestre was partially supported by NSF grants DMS-1254332 and DMS-1065979. }

\maketitle

\pagestyle{headings}		
\markboth{R. Schwab and L. Silvestre, Parabolic Regularity}{R. Schwab and L. Silvestre, Parabolic Regularity}

\section{Introduction}

We study the H\"older regularity for solutions of integro-differential equations of the form
\begin{equation} \label{e:main}  
	u_t + b(x,t) \cdot \grad u - \int_{\R^d} \big(u(x+h,t)-u(x,t) \big) K(x,h,t) \dd h = f(x,t).
\end{equation}
The integral may be singular at the origin and must be interpreted in the appropriate sense.
These equations now appear in many contexts. Most notably, they appear naturally in the study of stochastic processes with jumps, which traditionally has been the main motivation for their interest.  In the same way that pure jump processes contain the class of diffusions (processes with continuous paths) as particular limiting cases, the equation (\ref{e:main}) contains the usual second order parabolic equations as particular limiting cases.  This is due to the fact that the integral term becomes a second order operator $a_{ij}(x,t) \partial_{ij} u$ as the order $\al$ (to be defined below) converges to two.  We note that the simplest choice of $K$ is $K(h)=C_{d,\al}\abs{h}^{-d-\al}$, which results in the equation
\[
u_t + (-\Delta)^{\al/2}u=0,
\]
and converges to the usual heat equation $u_t-\Delta u=0$ as $\al\to2$ (recall that $(-\Delta)^{\al/2}$ is the operator whose Fourier symbol is $\abs{\xi}^\al$).

The H\"older estimates that we obtain in this article are an integro-differential version of the celebrated result by Krylov and Safonov for parabolic equations with measurable coefficients \cite{krylov1980certain}. There are in fact several versions of these H\"older estimates for integro-differential equations which were obtained in the last 10 years, and we briefly review them in Section \ref{sec:OtherResults}.  Besides the elliptic / parabolic distinction, the difference between each version of the estimates is in the level of generality in the possible choices of the kernels $K(x,h,t)$. In this article we obtain the estimates for a very generic class of kernels $K$, including nearly all previous results of this type.

The most common assumption in the literature is that for all $x$ and $t$, the kernel $K$ is comparable pointwise in terms of $h$ to the kernel for the fractional Laplacian. More precisely
\begin{equation} \label{e:pointwise-bound-for-K}  
	(2-\al) \frac {\lambda}{|h|^{d+\al}} \leq K(x,h,t) \leq (2-\al) \frac {\Lambda}{|h|^{d+\al}}.
\end{equation}
This assumption is often accompanied by the symmetry assumption: $K(x,h,t) = K(x,-h,t)$. It is important for the applications of these estimates that no regularity condition may be assumed for $K$ with respect to $x$ or $t$.

In this paper, we only assume a much weaker version of \eqref{e:pointwise-bound-for-K}.  The upper bound for $K$, in (\ref{e:pointwise-bound-for-K}), is relaxed to hold only in average when we integrate all the values of $h$ on an annulus, and it appears as assumption (A2).  Also, for our work, the lower bound in (\ref{e:pointwise-bound-for-K}) only needs to hold in a subset of values of $h$ which has positive density, given as assumption (A3). We also make an assumption, (A4), which says that the odd part of $K$ is under control if $\alpha$ is close to one. The exact conditions are listed in Section \ref{sec:KernelsAndAssume}.  We prove that solutions of (\ref{e:main}) are uniformly H\"older continuous, which we state in an informal way here and revisit more precisely in Section \ref{sec:Holder}.

\begin{thm}\label{thm:intro}
Let $u$ solve (\ref{e:main}).
Assume that for every $x \in B_1$ and $t \in [-1,0]$, the kernel $K(x,\cdot,t)$ satisfies the assumptions (A1), (A2), (A3) and (A4) in Section \ref{sec:KernelsAndAssume}. Assume also that $f$ is bounded, $b$ is bounded, and for $\al<1$ also $b\equiv 0$, then for some $\gam>0$
\[
[u]_{C^{\gam}(Q_{1/2})}\leq C(\norm{u}_{L^\infty(\R^d\times[-1,0])} + \norm{f}_{L^\infty(Q_1)}).
\]
\end{thm}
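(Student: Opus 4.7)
The plan is to run a De~Giorgi--Nash--Moser--Krylov--Safonov style argument adapted to the nonlocal parabolic setting: reduce Theorem~\ref{thm:intro} to an oscillation-decay statement and iterate it over dyadic parabolic scales $Q_{r^k}$. After rescaling in the $\al$-invariant way and normalizing $\norm{u}_{\infty}+\norm{f}_{\infty}\leq 1$, the goal becomes to produce a universal $\mu\in(0,1)$ (depending only on the structural constants in (A1)--(A4) and on dimension) such that each dyadic zoom-in dissipates the oscillation by a factor $\mu$, modulo a controlled contribution from the far-field tail (handled via the annular upper bound (A2)) and from $\norm{f}_{\infty}$. Iteration then yields $\osc_{Q_r} u \lesssim r^\gam$ and hence the claimed Hölder bound for some $\gam>0$ depending on $\mu$ and the chosen zoom factor.

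The oscillation decay itself would follow from a one-sided point estimate of Krylov--Safonov type: if $u\leq 1$ on $\R^d\times[-1,0]$, $u$ is a subsolution in the extremal class defined by (A1)--(A4), $\norm{f}_{\infty}$ is small, and $\{u\leq 0\}\cap Q_1$ has measure at least $|Q_1|/2$, then $u\leq 1-\theta$ on $Q_{1/2}$. Applied to whichever of $u-\inf u$ or $\sup u-u$ occupies the larger portion of $Q_1$, this yields the oscillation decay. The point estimate reduces, in turn, to an $L^{\eps}$ (weak Harnack) inequality for nonnegative supersolutions, which is the technical heart. My approach would be to combine a \emph{local point estimate}---a nonnegative supersolution which is small somewhere cannot have too much mass concentrated on a small sub-cylinder---with a \emph{Calderón--Zygmund-type covering} that upgrades this local fact into power-law decay of super-level sets. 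Since the convex-envelope mechanism of the classical nonlocal ABP is to be avoided, the local point estimate would be obtained by constructing an explicit radial barrier $\Phi$ which is a subsolution to the extremal operator on an annulus, is positive on an outer ring, and can be slid from above until it touches $u$ at an interior point; the positive-density lower bound (A3) is what forces $\Phi$ to be a genuine subsolution and yields the quantitative gain.

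The main obstacle is the robustness of both ingredients under the weak structural hypotheses. The lower bound (A3) is only on a set of positive density inside each annulus, so the nonlocal diffusion can be anisotropic and even concentrated on lower-dimensional subsets of $\R^d$; the barrier must exploit the mere existence of this set of directions in place of a pointwise lower bound on $K$, and it must produce a gain independent of where the mass sits. The upper bound (A2) being only an annular average permits pointwise spikes of $K$, which must be controlled in aggregate when comparing $u$ to $\Phi$. Condition (A4), together with the hypothesis that $b\equiv 0$ when $\al<1$, must be used to absorb the drift and the antisymmetric part of $K$ as $\al\to 1$, since without symmetry there is no free cancellation. Finally, the covering step requires redesign: standard Calderón--Zygmund cubes are incompatible both with the parabolic scaling $r\leftrightarrow r^{\al}$ and with the annular decomposition of $\R^d$ through which the operator sees the far-field of $u$. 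I would attempt a parabolic dyadic stopping-time argument on cylinders $Q_r=B_r\times(-r^{\al},0]$, pairing each stopped cylinder with an annular shell in $\R^d$ carrying the nonlocal energy, with thresholds tuned so that all constants remain uniform as $\al\uparrow 2$.
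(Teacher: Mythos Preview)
Your high-level architecture matches the paper's: reduce to oscillation decay, derive that from a weak Harnack/$L^\eps$ estimate for nonnegative supersolutions, and build the latter from a point-to-measure lemma, a barrier, and a covering argument. But the proposal has a genuine gap in how the point-to-measure step is obtained, and a secondary mismatch in the covering.

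\textbf{The point-to-measure step.} You propose to replace the convex envelope by ``sliding an explicit radial barrier $\Phi$ from above until it touches $u$,'' with (A3) forcing $\Phi$ to be a subsolution. This conflates two separate tools. The radial barrier is indeed constructed in the paper (Section~\ref{sec:Barrier}) using (A3), but its role is the \emph{opposite} implication: given that $u$ is already large on a small cylinder, propagate that positivity forward in time (the stacked point estimate, Lemma~\ref{l:stacked}), which is what feeds the covering. The growth lemma itself --- from $\min_{Q_{1/4}} u \leq 1$ deduce $|\{u\le A_0\}\cap Q_1|\ge \delta_0$ --- cannot be obtained by a barrier comparison. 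The paper replaces the convex envelope by the inf-convolution $q(x,t)=\min_{y}\{u(y,t)+64|x-y|^2\}$, first shows $q_t$ is bounded above on a set of positive measure (Corollary~\ref{c:half-Lipschitz} and Lemma~\ref{l:qt-ae}), and then at each touching point $(y,t)$ uses the equation together with (A3) to prove a \emph{two-ring} measure estimate (Lemma~\ref{l:local-measure-estimate}): there exists $r$ such that on both $B_{2r}\setminus B_r$ and $B_{2c_0r}\setminus B_{c_0r}$ the increment $\delta_h u(y)$ is controlled on a set of density $\mu/2$. The second ring is what bounds the image of the touching-point map $m:y\mapsto x$ (Lemmas~\ref{l:mBr} and~\ref{l:SomethingLikeGradientMapRegularity}), after which a single Vitali cover closes the argument. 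Your proposal contains no analogue of this mechanism, and under (A3) alone (lower bound only on a positive-density subset of each annulus) a sliding barrier does not produce a measure estimate in this direction.

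\textbf{The covering.} You suggest a Calder\'on--Zygmund stopping time on cylinders $Q_r=B_r\times(-r^\alpha,0]$, but such cylinders do not tile unless $\alpha=1$; this is precisely why the paper abandons CZ and uses the crawling ink spots theorem (Theorem~\ref{t:ink-spots}), which is purely Vitali-based in the parabolic metric. The forward time shift $\bar Q^m$ then leaks outside the target interval; the paper controls this by showing (Lemma~\ref{l:onlysmallcylinders}) that only cylinders of size $\rho\lesssim N^{-\gamma}$ can satisfy the density hypothesis, so the leaked measure decays geometrically and the iteration still closes.
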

The constants $C$ and $\gamma$ depend on the constants $\mu$, $\lambda$ and $\Lambda$ in (A1)-(A4), on the dimension $d$, on a lower bound for $\alpha$ (in particular $\alpha$ can be arbitrarily close to two), and on $\|b\|_{L^\infty}$.

Our purpose in developing Theorem \ref{thm:intro} is not merely for the sake of generalization. An estimate with the level of generality given here can be used to obtain a prior estimates for the homogeneous Boltzmann equation. This is is a novel application. None of the previous H\"older estimates for integral equations are appropriate to be applied to the Boltzmann equation.

As a byproduct of our proof of Theorem \ref{thm:intro} we simplify and clarify some of the details regarding parabolic covering arguments (see the Crawling Ink Spots of Section \ref{sec:LEpsilon}) as well as present a proof which does not invoke a convex envelop.  Rather, we circumvent the often-used gradient mapping of the convex envelop by using a mapping which associates points via their correspondence through parameters in an inf-convolution, modeled on the arguments of \cite{ImbertSilvestre2013EstimatesOnlyWhenGradLargeArXiv}, originating in \cite{Cabre-1997NondivergentEllpticOnManifoldCPAM}, \cite{Savin-2007SmallPerturbationCPDE}.

In the section \ref{s:nonlinear}, we apply this result to derive the $C^{1,\alpha}$ regularity for the parabolic Isaacs equation. This is a rather standard application of H\"older estimates for equations with rough coefficients as in Theorem \ref{thm:intro}.


\subsection{Comparison with previous results and some discussion of (\ref{e:main})}\label{sec:OtherResults}

The H\"older estimates for integro-differential equations which take the form of (\ref{e:main}) are a fractional order version of the classical theorem by Krylov and Safonov \cite{krylov1980certain}. This is a fundamental result in the study of regularity properties of parabolic equations in non divergence form, and has consequences for many aspects of the subsequent PDE theory.   The classical theorem of De Giorgi, Nash and Moser concerns second order parabolic equation in \emph{divergence form}, in contrast with the theorem of Krylov and Safonov. The basic results for integro-differential equations in divergence form were developed earlier, and a small survey of this subject can be found in \cite{KassSchwa-2013RegularityNonlocalParaParma}.

The simplest case of $K$ would be $K(h)=(2-\al)\abs{h}^{-d-\al}$, and this choice gives the operator $Lu(x) = -C_{d,\al}(-\Delta)^{\al/2}u(x)$, which is a multiple of the fractional Laplacian of order $\al$ (the operator whose Fourier symbol is $\abs{\xi}^\al$).  This operator (and its inverse, the Riesz potential of order $\al$) have a long history, and have been fundamental to potential theory for about a century, see for example Landkof's book \cite{Land-72}.  In fact, the appearance of nonlocal operators similar the one in (\ref{e:main}) is in some sense generic among all linear operators which satisfy the positive global maximum principle (that the operator is non-positive whenever it is evaluated at a positive maximum of a $C^2$ function).  This has been known since the work of Courr\`ege in \cite{Courrege-1965formePrincipeMaximum}. He proved that any linear operator with the positive maximum principle must be of the form
\begin{align*}
Lu(x) = -c(x)u(x) &+ b(x)\cdot \grad u(x) + \textnormal{Tr}(A(x)D^2u(x))\\
& + \int_{\R^d} \left(u(x+h)-u(x)-\Indicator_{B_1}(h)\grad u(x)\cdot h\right)\mu(x,\dd h),
\end{align*}
where $c\geq 0$ is a function, $A\geq 0$ is a matrix, $b$ is a vector, all of $A$, $b$, $c$ are bounded, and $\mu(x,\cdot)$ is a L\'evy measure which satisfies
\[
\sup_x \int_{\R^d}\min(\abs{h}^2,1)\mu(x,\dd h) < +\infty.
\]
Heuristically from the point of view of jump-diffusion stochastic processes, $b$ records the drift, $A$ records the local covariance (or $\sqrt{A}$ is the diffusion matrix), and $\mu$ records the jumps.

The first H\"older regularity result for an equation of the form \eqref{e:main} was obtained in \cite{bass2002harnack}. In that paper, the authors consider the elliptic equation ($u$ constant in time), with symmetric kernels satisfying the pointwise bound \eqref{e:pointwise-bound-for-K} and without drift. Their proof uses probabilistic techniques involving a related Markov (pure jump) stochastic process. Other results using probabilistic techniques were \cite{BaKa-05Holder} and \cite{song2004harnack}, where different assumptions on the kernels are considered. The first purely analytical proof was given in \cite{silvestre2006holder}.  This first generation of results are all only elliptic problems. They are not \emph{robust} in the sense that as the order approaches two, the constants in the estimates blow up (hence not recovering the known second order results).  Furthermore, they all require a pointwise bound below for the kernels as in \eqref{e:pointwise-bound-for-K}.

The first robust H\"older estimate for the elliptic problem was obtained in \cite{caffarelli2009regularity}, which means that the estimate proved in \cite{caffarelli2009regularity} has constants that do not blow up as the order $\alpha$ of the equation goes to two. In that sense, \cite{caffarelli2009regularity} is the first true generalization of the theorem of Krylov and Safonov. It was the first of the series of papers \cite{caffarelli2009regularity}, \cite{caffarelli2011regularity} and \cite{caff-silv-EK} recreating the regularity theory for fully nonlinear elliptic equations in the nonlocal setting. As above, these results are only for the elliptic problem, and they require symmetric kernels which satisfy the pointwise assumption \eqref{e:pointwise-bound-for-K}.

The first estimate for parabolic integro-differential equations, \emph{in non-divergence form}, appeared, to the best of our knowledge, in \cite{silvestre2011differentiability} (the divergence case had some earlier results such as \cite{bass2002transition}, \cite{ChKu03}). In this case the kernels are symmetric and satisfy \eqref{e:pointwise-bound-for-K} with $\alpha=1$. The focus of \cite{silvestre2011differentiability} is on the interaction between the integro-differential part and the drift term. The proof can easily be extended to arbitrary values of $\alpha$, but the estimate is not robust (it blows up as $\alpha \to 2$), and the details of this proof are explained in the lecture notes by one of the authors \cite{silvestre-lecturenotes}. It is even possible to extend this proof to kernels which satisfy the upper bound in average like in our assumption (A2) below (see \cite{silvestreICM}). However, the estimates are not robust, and the lower bound in \eqref{e:pointwise-bound-for-K} is required.

The first robust estimate for parabolic equations appeared in \cite{lara2014regularity}, which is a parabolic version of the result in \cite{caffarelli2009regularity}. The kernels are required to be symmetric and to satisfy the two pointwise inequalities \eqref{e:pointwise-bound-for-K} as an assumption.

Elliptic integro-differential equations with non-symmetric kernels are studied in the articles \cite{Chan-2012NonlocalDriftArxiv} and \cite{chang2012regularity}.  There, the kernels are decomposed into the sum of their even (symmetric) and odd parts. The symmetric part is assumed to satisfy \eqref{e:pointwise-bound-for-K}, and there are appropriate assumptions on the odd part so that the symmetric part of the equations controls the odd part.  This effectively makes the contribution to the equation from the odd part of the kernel a lower order term.

The only articles where the lower bound in the kernels \eqref{e:pointwise-bound-for-K} is not required to hold at all points are \cite{BjCaFi-2012NonlocalGradDepOps}, \cite{GuSc-12ABParma}, \cite{KassmannMimica-2013NondegJumpStochProApp}, and \cite{rang2013h}. These papers concern elliptic equations and the upper bound in \eqref{e:pointwise-bound-for-K} is still assumed to hold.  It is important to point out that under the conditions in \cite{BjCaFi-2012NonlocalGradDepOps} and \cite{rang2013h}, the Harnack inequality is not true. There is in fact a counterexample in \cite{BogdanSztonyk-2005HarnackStable} (also discussed in \cite{rang2013h}). The assumption in these works which was made to replace the pointwise lower bound on the kernels is more restrictive than our assumption (A3) below.

The main result in this article, see Theorems \ref{t:holder-no-drift} and \ref{t:holder-with-drift}, generalizes nearly all previous H\"older estimates (for both elliptic and parabolic equations) for integro-differential equations with rough kernels in non-divergence form. It strictly contains the H\"older regularity results in:  \cite{bass2002harnack}, \cite{BjCaFi-2012NonlocalGradDepOps}, \cite{caffarelli2009regularity}, \cite{Chan-2012NonlocalDriftArxiv}, \cite{chang2012regularity}, \cite{lara2014regularity},    \cite{GuSc-12ABParma},  and \cite{rang2013h}. There is an interesting new result given in \cite{KassmannMimica-2013IntrinsicScalingArXiv} which allows for kernels with a \emph{logarithmic} growth at the origin (among other cases), corresponding in our context to the limit $\al\to0$, and it is not contained in the result of this paper.

Our approach draws upon ideas from several previous papers. Moreover, we haven been able to simplify the ideas substantially, especially how to handle parabolic equations, and we do not follow the method in \cite{lara2014regularity}. Our method allows us to make more general assumptions on the class of possible kernels. We would like to point out that we do not make any assumption for \emph{simplicity} in this paper. Extending these results to a more singular family of kernels would require new ideas.

There are two possible directions that we did not pursue in this paper. We did not try to analyze singularities of the kernels of order more general than a power of $|h|$ as in \cite{KassmannMimica-2013IntrinsicScalingArXiv}. Also, it might be possible to extend our regularity results for equations with H\"older continuous drifts and $\alpha < 1$ as in \cite{silvestre2010holder}, although we do point out that the technique from \cite{silvestre2010holder} does not work right away with the methods in this paper.  We also point out that the results in this paper and all of the others mentioned (except for \cite{KassSchwa-2013RegularityNonlocalParaParma}), require that the L\'evy measure-- referred to above as $\mu(x,\dd h)$-- has a nontrivial absolutely continuous part, $K\dd h$, with respect to Lebesgue measure (our work allows for a measure with a density plus some singular part).  Verifying the validity of, and finding a proof for, results similar to Theorem \ref{thm:intro} in the case when $\mu$ may not have a density with respect to Lebesgue measure remains a significant open question in the integro-differential theory.

The importance of not assuming any regularity in $x$ and $t$ for the ingredients of (\ref{e:main})-- the case of so-called bounded measurable coefficients-- is for much more than simply mathematical generality.  For example, because equations such as (\ref{e:main}) often lack a ``divergence structure''-- i.e. admitting a representation as a weak formulation for functions in an energy space such as $H^{\al/2}$-- they can usually only be realized as classical solutions or as viscosity solutions (weak solutions).  (We note that uniqueness for equations related to (\ref{e:main}) is still an open question for the theory of viscosity solutions of integro-differential equations, and recent progress has been made in \cite{mouSwiech-uniqueness}.)  That means that one of the few tools available for compactness arguments involving families of solutions are those provided in the space of continuous functions via Theorem \ref{thm:intro}.  This is relevant for both the possibility of proving the existence of classical solutions as well as for analyzing fully nonlinear equations in a way that doesn't depend on the regularity of the coefficients.  Indeed, both situations can be viewed as morally equivalent to studying linear equations with bounded measurable coefficients.  For studying regularity of translation invariant equations this arises by effectively differentiating the equation, which results in coefficients which depend upon the solution.  In the fully nonlinear case, many situations involve operators which are a min-max of linear operators, and so the bounded measurable linear coefficients arise from choosing the operators which achieve the min-max for the given function at each given point-- a situation in which you cannot assume any regular dependence in the $x$ variable.  Such min-max representations turn out to be somewhat generic for fully nonlinear elliptic equations and was noted in the recent work \cite[Section 4]{GuSc-2014NeumannHomogPart1arXiv}.

\subsection{Application: the homogeneous non cut-off Boltzmann equation}
In this section, we briefly explain an important application of our main result which is not possible to obtain with any of the previously known estimates for integro-differential equations. This result is explained in detail in \cite{luis-in-progress}.

The Boltzmann equation is a well known integral equation that models the evolution of the density of particles in dilute gases. In the space homogeneous case, the equation is
\begin{equation} \label{e:Boltzmann}  f_t = Q(f,f) := \int_{\R^n} \int_{\partial B_1} (f(v',t) f(v'_\star,t) - f(v_\star,t) f(v,t)) B(|v-v_\star|,\theta) \dd \sigma \dd v_\star.
\end{equation}
Here $v'$, $v'_\star$ and $\theta$ are defined by the relations
\begin{align*}
r &= |v_\ast-v| = |v_\ast'-v'|, \\
\cos \theta &= \sigma \cdot \frac{v_\ast-v}{|v_\ast-v|}, \\
v' &= \frac{v+v_\ast} 2 + \frac{r} 2 \sigma,\\
v_\ast' &= \frac{v+v_\ast} 2 - \frac{r} 2 \sigma.
\end{align*}

There are several modeling choices for the \emph{cross section} function $B$. From some physical considerations, it makes sense to consider $B(r,\theta) \approx r^\gamma |\theta|^{n-1+\alpha}$, with $\gamma > -n$ and $\al \in (0,2)$. Note that this cross section $B$ is never integrable with respect to the variable $\sigma \in \partial B_1$. In order to avoid this difficulty, sometimes a (non physical) cross section is used which is integrable. This assumption is known as \emph{Grad's cut-off assumption}.

Until the middle of the 1990's, most works on the Boltzmann equation used Grad's cut-off assumption. The non cut-off case, despite of its relevance for physical applications, was not studied so much due to its analytical complexity.
An important result that resulted in a better understanding of the non cut-off case came with the paper of Alexandre-Desvillettes-Villani-Wennberg \cite{boltzmann}, in which they obtained a lower bound on the entropy dissipation in terms of the Sobolev norm $\|f\|_{loc}^{\alpha/2}$. All regularity results for the non cut-off case which came afterwards are based on a coercivity estimate which is a small variation of this entropy dissipation argument. So far, this was the only regularization mechanism which was known for the Boltzmann equation.

It turns our that we can split the right hand side of the Boltzmann equation, (\ref{e:Boltzmann}), in two terms. The first one is an integro-differential operator, and the second is a lower order term.
{
\allowdisplaybreaks
\begin{align*}
f_t &= Q_1(f,f) + Q_2(f,f), \\
&:=\int_{\R^n} \int_{\partial B_1} f(v'_\star,t) (f(v',t) - f(v,t)) B(|v-v_\star|,\theta) \dd \sigma \dd v_\star \\ 
&\phantom{=} \qquad + f(v,t) \int_{\R^n} \int_{\partial B_1} (f(v'_\star,t) - f(v_\star,t)) B(|v-v_\star|,\theta) \dd \sigma \dd v_\star, \\
&= \int_{\R^n} (f(v',t) - f(v,t)) K_f(v,v',t) \dd v' + c f(v,t) [|v|^\gamma \ast f](v).
\end{align*} 
}

The kernel $K_f$ depends on $f$ through a complicated change of variables given using the integral identity above. If one knew that $f$ was a smooth positive function vanishing at infinity, then indeed it could be proved that $K_f(v,v',t) \approx |v-v'|^{-n-\alpha}$, and the first term would correspond to an integro-differential operator of order $\alpha$ in the usual sense satisfying \eqref{e:pointwise-bound-for-K}. Unfortunately, this is not practical for obtaining basic a priori estimates for (\ref{e:Boltzmann}). In fact, there is very little we can assume a priori from the solution $f$ to the Boltzmann equation, and it is not enough to conclude that $K_f$ satisfies \eqref{e:pointwise-bound-for-K}. Instead, all we know a priori about $f$ is given by its \emph{macroscopic quantities}: its mass (the integral of $f$), the energy (its second moment), and its entropy. The first two quantities are constant in time, whereas the third is monotone decreasing. It can be shown that $K_f$ satisfies the hypothesis (A1), (A2), (A3) and (A4) depending on these macroscopic quantities only. Therefore, the results in this article can be used to obtain a prior estimates for solutions of the homogeneous, non cut-off, Boltzmann equation, which is explained in \cite{luis-in-progress}. It is a new regularization effect for the Boltzmann equation which is not based on coercivity estimates as in \cite{boltzmann}.

Interestingly enough, the macroscopic quantities do not give much more information about $K_f$ than what our assumptions (A1), (A2) and (A3) say. The kernels $K_f$ will be symmetric, so in fact (A4) is redundant. In terms of this generalization, almost the full power of our main result is needed. The only non essential points are that the kernels can be assumed to be symmetric, and the robustness of the estimates does not necessarily play a role.

\subsection{Notation}

\begin{itemize}
\item Our space variable $x$ belongs to $\R^d$.

\item The annulus is $R_r:= B_{2r}\setminus B_r$

\item The parabolic cylinder $Q_r$ is defined as
\[ Q_r := B_r \times (-r^\alpha,0]\ \text{and with a different center}\ Q_r(x,t) = Q_r + (x,t).\]

\item The ``$\al$-growth'' class is 
\[
Growth(\al) = \{ v: \R^d\to \R\ |\ \abs{v(x)}\leq C(1+\abs{x})^{\al-\eps}\ \text{for some}\ C, \eps >0    \}
\]

\item Pointwise $C^{1,1}$
\begin{align*}
C^{1,1}(x) := \{ v&:\R^d\to\R\ |\ \exists\ M(x)\ \text{and}\ \eps\ \\
&\text{so that}\ \abs{v(x+h)-v(x)-\grad v(x)\cdot h}\leq M(x)\abs{h}^2\ \text{for}\ \abs{h}<\eps    \}
\end{align*}

\item 
\begin{align*}
C^{1,1}(\R^d) := \{ v&:\R^d\to\R\ |\ \norm{v}_{L^\infty(\R^d)}<\infty,\ \norm{\grad v}_{L^\infty(\R^d)}<\infty, \\ &\text{and}\ v\in C^{1,1}(x)\ \forall x\ \text{with}\ M(x)\ \text{is independent of}\ x     \}
\end{align*}

\item The difference operator for the different possibilities of $\al$ is
\begin{equation*}
	\del_y u(x) := 
	\begin{cases}
	u(x+y) - u(x)\ &\text{if}\ \al < 1\\	
	u(x+y) - u(x) - \Indicator_{B_1}(y) \nabla u(x) \cdot y\ &\text{if}\ \al = 1\\
	u(x+y) - u(x) - \nabla u(x) \cdot y\ &\text{if}\ \al >1
	\end{cases} 
\end{equation*}

\item The class of kernels and corresponding linear operators are
\[
\mathcal K := \{K\ :\ \R^d\to\R\ |\ K\ \text{satisfies assumptions (A1)-(A4)} \}
\]
\[
\mathcal L := \{ Lu(x) = \int_{\R^d}\del_h u(x) K(h) \dd h\ |\ K\in\mathcal K \}
\]

\end{itemize}

\noindent
We will try to stick to the following conventions for constants:
\begin{itemize}
	\item Large constants will be upper case letters, e.g. $C$, and small constants will be lower case letters, e.g. $c$.
	\item If the value of a constant is not relevant for later arguments, then we will freely use the particular letter for the constant without regard to whether or not it was used previously or will be used subsequently.
	\item If the value of a constant is relevant to later arguments (e.g. in determining values of subsequent constants), then we will label the constant with a subscript, e.g. $C_0$, $C_1$, $C_2$, etc... 
\end{itemize}

\begin{note}
The following observation is useful and applies for all values of $\alpha$: if $u(x) = \varphi(x)$ and $u \geq \varphi$ everywhere, then $\delta_h u(x) \geq \delta_h \varphi(x)$ for all $h$.  This implicitly assumes that for $\al\geq 1$ that $u$ and $\phi$ are both differentiable at $x$.
\end{note}

\section{Classes of kernels and extremal operators}
\label{sec:KernelsAndAssume}

The kernel $K(x,h,t)$ in \eqref{e:main} is not assumed to have any regularity with respect to $x$ or $t$. The best way to think about it is that for every value of $x$ and $t$ we have a kernel ($K_{x,t}(h) = K(x,\cdot,t)$) which belongs to certain class. This class of kernels is what we describe below.

\subsection{Assumptions on $K$}

For each value of $\lambda$, $\Lambda$, $\mu$ and $\alpha$, we consider the family of kernels $K: \R^d \to \R$ satisfying the following assumptions.
\begin{enumerate}
\item[(A1)] $K(h) \geq 0$ for all $h\in\R^d$.

\item[(A2)] For every $r>0$,
\begin{equation} \label{e:assumption-above}  \int_{B_{2r} \setminus B_r} K(h) \dd h \leq (2-\alpha) \Lambda r^{-\alpha}
\end{equation}

\item[(A3)] For every $r>0$, there exists a set $A_r$ such that
\begin{itemize}
\item $A_r \subset B_{2r} \setminus B_r$.
\item $A_r$ is symmetric in the sense that $A_r = -A_r$.
\item $|A_r| \geq \mu |B_{2r} \setminus B_r|$.
\item $K(h) \geq (2-\alpha) \lambda r^{-d-\alpha}$ in $A_r$.
\end{itemize}
Equivalently
\begin{equation} \label{e:assumption-below}  \abs{ \set{y \in B_{2r} \setminus B_r: K(h) \geq (2-\alpha) \lambda r^{-d-\alpha} \text{ and } K(-h) \geq (2-\alpha) \lambda r^{-d-\alpha} }} \geq \mu |B_{2r} \setminus B_r|.
\end{equation}

\item[(A4)] For all $r>0$,
\begin{equation} \label{e:assumption-odd}  \left\vert \int_{B_{2r} \setminus B_r} h K(h) \dd h \right\vert \leq \Lambda |1-\alpha| r^{1-\alpha}.
\end{equation}
\end{enumerate}


\subsection{Discussion of the assumptions}

We stress that although our kernels can be zero for large sets of $h$, their corresponding integral operators are not rightfully described as ``degenerate''.  One can draw an analogy with the second order case in the context of diffusions.  A diffusion process will satisfy uniform hitting time estimates for measurable sets of positive measure whenever the diffusion matrix is comparable to the identity from below and above.  In the context of our pure jump processes related to (\ref{e:main}), these jump processes will still satisfy such uniform hitting time estimates even though the kernels can be zero in many points (meaning that at the occurrence of any \emph{one} jump, the process will have zero probability of jumping with certain values of $h$).

The first assumption, (A1), is unavoidable if one hopes to study examples of (\ref{e:main}) which satisfy a comparison principle between sub and super solutions.

The second assumption, (A2), is mostly used to estimate an upper bound for the application of the operator, $L$, to a smooth test function. It is more general than assuming a pointwise upper bound such as was done in \cite{caffarelli2009regularity}, \cite{rang2013h} and many others. It is also slightly more general than a corresponding bound obtained by integrating on spheres as
\[ \int_{\partial B_r} K(h) \dd S(h) \leq (2-\alpha) \Lambda r^{-1-\alpha}.\]
It is however, a stronger hypothesis than 
\[ \int_{B_r} |h|^2 K(h) \dd h \leq \Lambda r^{2-\alpha}.\]
It is worth pointing out that (A2) implies
\[ 
\int_{\R^d \setminus B_r} 
K(h) \dd h \leq \frac{2^\alpha}{2^\alpha-1}(2-\alpha) \Lambda r^{-\alpha}.
\]
The first factor blows up as $\alpha \to 0$ but not as $\alpha \to 2$. In fact, the proofs of all our regularity results fail for $\alpha \leq 0$ exactly because the tails of the integrals become infinite.  The question of what happens as $\al\to0$ is interesting for the nonlocal theory, and some results are obtained in \cite{KassmannMimica-2013IntrinsicScalingArXiv} (note, there they do not use the typical normalization constant as in potential theory, where $C_{d,\al}\approx \al$ as $\al\to0$, so the limit operator is \emph{not} a multiple of the identity).
We also have
\begin{equation} \label{e:integrability-of-K} 
	 \int_{\R^d} (1\wedge |h|^2) K(h) \dd h \leq C(\alpha) \Lambda,
\end{equation}
for a constant $C(\alpha)$ which stays bounded as $\alpha \to 2$, and \eqref{e:assumption-above} can be thought of as a scale invariant, of order $\alpha$, version of \eqref{e:integrability-of-K}.

Note that the assumption (A2) does not preclude the kernel $K$ to contain a singular measure. For example, the measure given by
\[ \int_A K(h) \dd h = \int_{A \cap \{h_1=h_2=\dots=h_{d-1}=0\}} (2-\alpha) \frac \lambda {|h_n|^{1+\alpha}} \dd h_d,\]
is a valid kernel $K$ which satisfies (A2) (but not (A3)). In this case $K$ is a singular measure, but we abuse notation by writing it as if it was absolutely continuous with a density $K(h)$.

The example above corresponds to the operator
\[ -\int_{\R^d} \delta_h u(x) K(h) \dd h = (-\partial_{dd})^{\alpha/2} u.\]
As we mentioned before, this kernel satisfies the assumption (A2) but not (A3). However, the kernel of the operator
\[ -\int_{\R^d} \delta_h u(x) K(h) \dd h := (-\partial_{dd})^{\alpha/2} u(x) + (-\Delta)^{\alpha/2} u(x)\]
would satisfy both (A2) and (A3).

The third assumption, (A3), is stated in a form which does not require the kernel $K$ to be positive along some prescribed rays or cone-like sets as was done in \cite{rang2013h}. The relaxation to (A3) from previous works is important to allow for situations where the positivity set of $K$ may change from radius to radius. As mentioned above, it is equivalent to \ref{e:assumption-below}, which is the form we will actually invoke later on.

Finally, we note that the assumption, (A4), is automatic for symmetric kernels (i.e. when $K(h) = K(-h)$), since in that case the left hand side is identically zero. This assumption is made in order to control the odd part of the kernels in a fashion that does not require us to split up $L$ into two pieces involving the even and odd parts of $K$.  It is also worth pointing out that even for $\al<1$, $K$ can have some asymmetry, but it must die out as $r\to\infty$.   

There are two final facts which are important to point out.  The first one is the observation that although each $K$ may not be such that 
\[
Lu(x) = \int_{\R^d} \del_hu(x)K(h) \dd h
\]
results in an operator which is scale invariant, i.e. $Lu(r\cdot)(x) = r^\al Lu(rx)$, the \emph{family of $K$} which satisfy (A1)-(A4) is scale invariant.   The second one is that some authors have worked with assumptions where the lower bound in (\ref{e:pointwise-bound-for-K}) is only required for $\abs{h}\leq 1$.  This does not effect our overall result because we can add and subtract the term
\[
f(u;x) := (2-\al)\int_{\R^d} \del_hu(x)\Indicator_{\R^d\setminus B_1}(h)\abs{h}^{-d-\al}\dd h
\]
from the equation (\ref{e:main}).  Assuming $K$ satisfies the lower bound of (\ref{e:pointwise-bound-for-K}) only for $\abs{h}\leq 1$, this would result in an operator governed by $\tilde K(h) = K(h) + \Indicator_{\R^d\setminus B_1}(h)\abs{h}^{-d-\al}$, and now $\tilde K$ does satisfy the lower bound of (\ref{e:pointwise-bound-for-K}) for all $h$.  Furthermore, the term $f(u;\cdot)$ is controlled by $\norm{u}_{L^\infty}$ and possibly $C\abs{\grad u}$ (depending on $\al$) due to the fact that $\Indicator_{\R^d\setminus B_1}(h)\abs{h}^{-d-\al}$ is integrable, and hence these terms can be absorbed into the equation as a gradient term and bounded right hand side.  This pertains to, e.g., the results in \cite{Chan-2012NonlocalDriftArxiv}.



\subsection{Extremal operators and useful observations}

As mentioned above, $\mathcal L$ is the class of all integro-differential operators $Lu$ of the form
\[
Lu(x) = \int_{\R^d} \delta_h u(x) K(h) \dd h,
\]
where $K$ is a kernel satisfying the assumptions (A1)-(A4) specified above.  Sometimes we wish to refer to a kernel, $K$, instead of the operator, $L$, and so we also use $\mathcal K$ to denote the collection of all such kernels.  Correspondingly, we define the extremal operators $M^-_{\mathcal L}$ and $M^-_{\mathcal L}$ as in \cite{caffarelli2009regularity}:
\begin{align*}
M^+_{\mathcal L} u(x) &= \sup_{L \in \mathcal L} Lu(x), \\
M^-_{\mathcal L} u(x) &= \inf_{L \in \mathcal L} Lu(x).
\end{align*}
In order to avoid notational clutter, we omit the subscript $\mathcal L$ in the rest of the paper.  We note that when \eqref{e:main} holds for some kernel $K$ satisfying the assumptions and with a bounded $b$ and $f$, this implies also the pair of inequalities is simultaneously satisfied:
\begin{align*}
u_t + C_0 |\grad u| - M^- u &\geq -C_0, \\
u_t - C_0 |\grad u| - M^+ u &\leq C_0.
\end{align*}
The advantage of this new formulation is that it can be understood in the viscosity sense, whereas the original equation \eqref{e:main} only makes sense for classical solutions.  Unless otherwise noted, we use the terms solution, subsolution, and supersolution to to be interpreted in the viscosity sense (made precise below, in Definition \ref{d:viscosity}).  There may be instances when we need equations to hold in a classical sense, and in those cases we will explicitly mention that need.

\begin{remark}
	We emphasize that although (\ref{e:main}) allows for $K$ which are $x$-dependent, the class $\mathcal L$-- and hence the definition of $M^\pm$-- contains only those $K$ which are independent of $x$.  The reason that the desired inequalities are obtained is that $\mathcal L$ contains \emph{all possible} such $K$, and hence, for each $x$ fixed, $K(x,\cdot)\in \mathcal L$.
\end{remark}

It will be useful to know an important feature of $M^\pm$ regarding translations, rotations, and scaling.  This is an important feature to keep in mind in the sense that for any \emph{one} choice of a kernel to determine (\ref{e:main}), $K$ may not have any symmetry or scaling properties on its own.  However, it is controlled by an extremal operator which does enjoy these properties.  This is particularly relevant for intuition on what to expect from solutions of these equations.

\begin{lemma}\label{l:M+Properties}
	$M^+$ (and hence $M^-$) obey the following
	\begin{enumerate}
		\item[(i)] If $z\in\R^d$ is fixed, and $Tu:= u(\cdot+z)$, then $M^+Tu(x) = M^+u(x+z)$ (translation invariant).
		\item[(ii)] If $R$ is a rotation or reflection on $\R^d$, then $M^+u(R\cdot)(x)= M^+u(Rx)$ (rotation invariant).
		\item[(iii)] If $r>0$, then $M^+u(r\cdot)(x) = r^\al M^+u(rx)$ (scaling).
	\end{enumerate}
	
\end{lemma}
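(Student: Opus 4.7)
The proof I have in mind is a direct unpacking of the definition of $M^+$ combined with the observation, already emphasized at the end of Section \ref{sec:KernelsAndAssume}, that the family $\mathcal K$ of kernels satisfying (A1)–(A4) is invariant under the relevant symmetries (translation and rotation leave it fixed; scaling sends it to itself up to the factor $r^\alpha$). In each case the strategy is: (a) rewrite $L(Tu)(x)$, $L(u\circ R)(x)$, or $L(u(r\cdot))(x)$ as an integral of $\delta_{h'}u$ evaluated at the shifted/rotated/scaled point against a \emph{new} kernel $\widetilde K$ obtained from $K$ by a change of variables $h \mapsto h'$; (b) verify that $K\mapsto \widetilde K$ is a bijection of $\mathcal K$ to itself (or to $r^\alpha \mathcal K$ in the scaling case); (c) take the supremum.

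For (i), the translation is immediate because $\delta_h(Tu)(x) = \delta_h u(x+z)$ (the definition of $\delta_h$ in the excerpt only depends on $u$ through evaluation and gradient at the base point and at $x+h$, which both shift by $z$). Thus for every $L\in\mathcal L$, $L(Tu)(x) = Lu(x+z)$, and the supremum over $\mathcal L$ on both sides gives (i). For (ii), one computes using $\nabla(u\circ R)(x) = R^T \nabla u(Rx)$ that $\delta_h(u\circ R)(x) = \delta_{Rh} u(Rx)$. Substituting $h' = Rh$ (with $\dd h' = \dd h$ since $|\det R|=1$) gives
\[
L(u\circ R)(x) \;=\; \int_{\R^d} \delta_{h'} u(Rx)\, K(R^T h')\,\dd h' \;=\; \widetilde L u(Rx),
\]
where the new kernel is $\widetilde K(h'):= K(R^T h')$. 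All four assumptions (A1)–(A4) are rotationally symmetric: (A1) pointwise; (A2) and (A3) because $R$ preserves annuli $B_{2r}\setminus B_r$ and Lebesgue measure; and (A4) because the integral $\int_{B_{2r}\setminus B_r} h \widetilde K(h)\,\dd h$ is just $R$ applied to the corresponding integral for $K$, so its Euclidean norm is unchanged. Hence $K\mapsto \widetilde K$ is a bijection of $\mathcal K$, and taking supremum gives (ii).

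For (iii), writing $v(x) = u(rx)$ we compute $\delta_h v(x) = \delta_{rh} u(rx)$ (the gradient scales to reproduce the $rh$ inside the correction term for $\alpha\ge 1$). Substituting $h' = rh$ produces
\[
Lv(x) \;=\; \int_{\R^d} \delta_{h'} u(rx)\, r^{-d} K(h'/r)\,\dd h'.
\]
Setting $K^\ast(h') := r^{-d-\alpha} K(h'/r)$ this becomes $Lv(x) = r^{\alpha} L^\ast u(rx)$, where $L^\ast$ has kernel $K^\ast$. A direct change of variables in (A2)–(A4) shows that $K^\ast \in \mathcal K$ whenever $K\in\mathcal K$ (with the same $\lambda,\Lambda,\mu,\alpha$), and that the assignment $K \mapsto K^\ast$ is a bijection of $\mathcal K$ onto itself; this is precisely the scale invariance of the \emph{family} of kernels, which the authors already highlight. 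Taking the supremum then yields $M^+ v(x) = r^\alpha M^+u(rx)$.

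The one slightly delicate step is (A4) in both (ii) and (iii): for rotations we use that it controls the norm of a vector-valued integral (which is rotationally invariant), and for scalings we have to check that the factor $|1-\alpha| r^{1-\alpha}$ on the right hand side of (A4) scales correctly after the change of variables. Apart from that, each statement is essentially a change-of-variables formula followed by the observation that the supremum is taken over a class invariant under the induced map $K \mapsto \widetilde K$. The claim for $M^-$ follows by replacing $\sup$ with $\inf$ throughout, or equivalently from $M^- u = -M^+(-u)$.
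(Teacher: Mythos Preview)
Your approach is the same as the paper's: translation is pointwise, rotation and scaling are handled by changing variables in $h$ and checking that the induced map on kernels is a bijection of $\mathcal K$. The paper's proof is more terse but follows exactly these steps.

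There is one genuine gap. In part (iii) you assert that $\delta_h v(x) = \delta_{rh} u(rx)$ for $v(x)=u(rx)$, and you justify this by saying ``the gradient scales to reproduce the $rh$ inside the correction term for $\alpha\ge 1$.'' This is correct for $\alpha>1$ and trivially for $\alpha<1$, but it is \emph{false pointwise} when $\alpha=1$: in that case the correction term is $\Indicator_{B_1}(h)\,\nabla u\cdot h$, and after rescaling one has $\Indicator_{B_1}(h)$ on one side and $\Indicator_{B_1}(rh)=\Indicator_{B_{1/r}}(h)$ on the other. The two expressions for $\delta$ differ by $\nabla u(rx)\cdot (rh)\bigl(\Indicator_{B_1}(h)-\Indicator_{B_{1/r}}(h)\bigr)$. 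The paper fixes this by observing that when $\alpha=1$, assumption (A4) forces $\int_{B_{2\rho}\setminus B_\rho} h\,K(h)\,\dd h = 0$ for every $\rho>0$ (the right-hand side of (A4) vanishes at $\alpha=1$), so the extra term integrates to zero against any $K\in\mathcal K$ and the identity $Lv(x)=r^\alpha \widetilde L u(rx)$ still holds at the level of the integral even though $\delta_h v(x)\neq\delta_{rh}u(rx)$ pointwise. You should add this remark; without it the scaling argument is incomplete at $\alpha=1$.
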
  

\begin{proof}[Proof of Lemma \ref{l:M+Properties}]
	Property (i) follows from a direct equality in $LTu(x)=Lu(x+z)$ whenever $K\in\mathcal L$ (importantly note that $K\in\mathcal L$ requires $K(x,h)=K(h)$).  Property (ii) follows because $\mathcal L$ is closed under composing $K$ with a rotation or reflection.  Property (iii) follows from the observation that if $K\in\mathcal L$, then
	\[
	\widetilde K(h):= r^{-d-\al}K(\frac{h}{r}) \in\mathcal L
	\]
	as well, combined with the fact that for $L$, $\tilde L$ corresponding to $K$, $\tilde K$
	\[
	Lu(r\cdot)(x) = r^\al \tilde Lu(rx).
	\]
It is worth remarking that when $\al=1$, one must be careful with rescaling the integral due to the presence of $\Indicator_{B_1}(h)$.  However, in this case the rescaling still holds because (A4) implies that 
\begin{equation*}
	\int_{B_{1}\setminus B_r} hK(h) \dd h = 0,
\end{equation*}
and this allows to keep the term $\Indicator_{B_1}(h)$ fixed in $\tilde L$ without effecting its value.
\end{proof}

In the rest of this section, we make some elementary estimates that give us some bounds on $Lu(x)$ in terms of bounds for $u$ and its derivatives. These estimates explain the need of the assumptions \eqref{e:assumption-above} and \eqref{e:assumption-odd}.  We start with the following lemma.

\begin{lemma}\label{l:KernelIntegrationBounds}
Let $K$ be a kernel satisfying assumptions (A2) and (A4). Then, the following inequalities hold
\begin{align}
\int_{B_r} |h|^2 K(h) \dd h &\leq C \Lambda r^{2-\alpha}, \label{e:integral-square-Br}\\
\left\vert \int_{B_r} h K(h) \dd h \right\vert  &\leq C \Lambda r^{1-\alpha}\ \text{ if } \alpha < 1,  \label{e:integral-linear-Br} \\
\left\vert \int_{\R^d \setminus B_r} h K(h) \dd h \right \vert  &\leq C \Lambda r^{1-\alpha} \ \text{ if } \alpha > 1, \label{e:integral-linear-tail}\\
\int_{\R^d \setminus B_r} K(h) \dd h &\leq C \Lambda \frac{2-\alpha}{\alpha} r^{-\alpha}.\label{e:integral-value-tail}
\end{align}
In this lemma, the constant $C$ is independent of all the other constants.
\end{lemma}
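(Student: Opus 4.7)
The plan is to prove all four inequalities by the same dyadic annular decomposition, using assumption (A2) for bounds involving $K$ alone and assumption (A4) for bounds involving the vector $hK(h)$. In each case, after summing the geometric series, I will need to check that the resulting constant stays bounded independently of $\alpha$ (in the appropriate range), which is the one point that requires attention.

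First, for \eqref{e:integral-square-Br}, write $B_r = \bigcup_{k=0}^\infty R_{r2^{-k-1}}$ where $R_\rho = B_{2\rho}\setminus B_\rho$. On each such annulus $|h|^2 \le 4\rho^2$, so by (A2) I get
\begin{equation*}
\int_{B_r} |h|^2 K(h)\dd h \le \sum_{k=0}^\infty 4(r2^{-k})^2 \cdot (2-\alpha)\Lambda (r2^{-k-1})^{-\alpha}
= C\Lambda(2-\alpha) r^{2-\alpha}\sum_{k=0}^\infty 2^{-k(2-\alpha)}.
\end{equation*}
Since $\alpha < 2$, the geometric sum equals $(1-2^{-(2-\alpha)})^{-1}$, and the product $(2-\alpha)(1-2^{-(2-\alpha)})^{-1}$ stays bounded on $\alpha\in[0,2]$ (by the inequality $1-2^{-s}\ge cs$ on $[0,2]$), yielding \eqref{e:integral-square-Br}.

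For \eqref{e:integral-linear-Br} and \eqref{e:integral-linear-tail}, the dyadic decomposition with (A4) on each annulus gives
\begin{equation*}
\left| \int_{B_r} hK(h)\dd h \right| \le \sum_{k=0}^\infty \Lambda|1-\alpha|(r2^{-k-1})^{1-\alpha} \quad (\alpha<1),
\end{equation*}
\begin{equation*}
\left| \int_{\R^d\setminus B_r} hK(h)\dd h \right| \le \sum_{k=0}^\infty \Lambda|1-\alpha|(r2^{k})^{1-\alpha} \quad (\alpha>1).
\end{equation*}
When $\alpha<1$ the first series is geometric with ratio $2^{-(1-\alpha)}<1$ so it sums to a multiple of $r^{1-\alpha}|1-\alpha|/(1-2^{-(1-\alpha)})$; when $\alpha>1$ the second series has ratio $2^{-(\alpha-1)}<1$ and sums to a multiple of $r^{1-\alpha}|1-\alpha|/(1-2^{-(\alpha-1)})$. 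In both cases the prefactor $|1-\alpha|/(1-2^{-|1-\alpha|})$ is bounded uniformly in $\alpha$ by the same elementary inequality $1-2^{-s}\ge cs$ for $s\in[0,1]$, giving $C\Lambda r^{1-\alpha}$.

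Finally, for \eqref{e:integral-value-tail}, the dyadic tail decomposition with (A2) yields
\begin{equation*}
\int_{\R^d\setminus B_r} K(h)\dd h \le \sum_{k=0}^\infty (2-\alpha)\Lambda(r2^k)^{-\alpha}
= (2-\alpha)\Lambda r^{-\alpha}\cdot \frac{1}{1-2^{-\alpha}}.
\end{equation*}
Using $1-2^{-\alpha}\ge c\alpha$ for $\alpha\in(0,2]$, the last factor is $\le C/\alpha$, giving the desired $C\Lambda\,\tfrac{2-\alpha}{\alpha}\,r^{-\alpha}$. The only subtle point throughout is confirming that the constants depend only on dimension and not on $\alpha$, which is handled in every case by the single elementary inequality $1-2^{-s}\ge cs$ on bounded intervals; no other ingredient of the proof is nontrivial.
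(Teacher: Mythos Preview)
Your proof is correct and follows exactly the approach the paper uses: dyadic annular decomposition combined with (A2) and (A4), summing the resulting geometric series. You in fact give more detail than the paper, which only writes out the case \eqref{e:integral-linear-tail} explicitly and leaves the others to the reader; your explicit verification that the factors $(2-\alpha)/(1-2^{-(2-\alpha)})$, $|1-\alpha|/(1-2^{-|1-\alpha|})$, and $1/(1-2^{-\alpha})\le C/\alpha$ stay bounded via $1-2^{-s}\ge cs$ is a nice touch that the paper only asserts.
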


\begin{proof}
	The four assertions are all proved in a similar fashion, and they follow from a straight-forward decomposition of the integrals in dyadic rings $B_{2^{k+1} r} \setminus B_{2^k r}$ followed by applications of \eqref{e:assumption-above} and \eqref{e:assumption-odd}.  We will only write down explicitly the proof of \eqref{e:integral-linear-tail} as an example. 

Assume $\alpha>1$. We use \eqref{e:assumption-odd} and decompose the integral in dyadic rings $B_{2^{k+1} r} \setminus B_{2^k r}$
\begin{align*}
\left\vert \int_{\R^d \setminus B_r} h K(h) \dd h \right\vert  &\leq \sum_{k=0}^\infty \left\vert \int_{B_{2^{k+1}r} \setminus B_{2^k r}} h K(h) \dd h  \right\vert, \\
&\leq \sum_{k=0}^\infty \Lambda |1-\alpha| (2^k r)^{1-\alpha}, \\
&\leq \Lambda r^{1-\alpha} \frac {|1-\alpha|}{1-2^{1-\alpha}}.
\end{align*}
Since the last factor on the right is bounded uniformly for $\alpha \in (1,2)$, we finished the proof.
\end{proof}

\begin{lemma} \label{l:classical-bounds}
Assume $\alpha \geq \alpha_0$. Let $K$ be any kernel which satisfies \eqref{e:assumption-above} and \eqref{e:assumption-odd}. Let $u$ be a function which is $C^2$ around the point $x$ and $p = \grad u(x)$. Moreover, assume that $u$ satisfies the following bounds globally
\begin{align*}
D^2 u &\leq A I, \\
|u| &\leq B.
\end{align*}
Then,
\[ \int_{\R^d} \delta_h u(x) K(h) \dd h \leq C \left( \frac B A \right)^{-\alpha/2} \left( B + \left( \frac B A \right)^{1/2} |p| \right).\]
Here $C$ is a constant which depends on $\Lambda$ and $\alpha_0$. Moreover, when $\alpha=1$ we can drop the term depending on $p$ and get
\[ \int_{\R^d} \delta_h u(x) K(h) \dd y \leq C (AB)^{1/2}.\]
\end{lemma}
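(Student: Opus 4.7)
The plan is to split the integral at a scale $r>0$, to be chosen later, and bound $\int_{B_r}\delta_h u(x)K(h)\dd h$ by Taylor expansion of $u$ at $x$, while bounding $\int_{\R^d\setminus B_r}\delta_h u(x)K(h)\dd h$ by brute force using the $L^\infty$ bound $|u|\le B$. On the small ball, the hypothesis $D^2 u\le AI$ -- equivalent to the convexity of $\frac{A}{2}|\cdot|^2-u$ -- gives $u(x+h)-u(x)-p\cdot h\le \frac{A}{2}|h|^2$ for all $h$, so $\delta_h u(x)\le \frac{A}{2}|h|^2$ when $\alpha\ge 1$ (the affine piece being absorbed into $\delta_h$), whereas for $\alpha<1$ one writes $\delta_h u(x)\le p\cdot h+\frac{A}{2}|h|^2$. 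Integrating via \eqref{e:integral-square-Br} (and \eqref{e:integral-linear-Br} when $\alpha<1$), the $B_r$-contribution is at most $C\Lambda A r^{2-\alpha}$ if $\alpha\ge 1$ and $C\Lambda(A r^{2-\alpha}+|p|r^{1-\alpha})$ if $\alpha<1$.

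On the complement, $|u(x+h)-u(x)|\le 2B$ combined with \eqref{e:integral-value-tail} yields $C\Lambda B r^{-\alpha}$; when $\alpha>1$, the subtracted linear term $-p\cdot h$ inside $\delta_h$ contributes an extra $C\Lambda|p|r^{1-\alpha}$ via \eqref{e:integral-linear-tail}. In all cases $\alpha\ne 1$ these estimates combine to
$$\int_{\R^d}\delta_h u(x) K(h)\dd h\;\le\; C\Lambda\bigl(Ar^{2-\alpha}+Br^{-\alpha}+|p|r^{1-\alpha}\bigr),$$
and the choice $r=(B/A)^{1/2}$ balances the $A$- and $B$-terms, producing exactly the advertised $C(B/A)^{-\alpha/2}\bigl(B+(B/A)^{1/2}|p|\bigr)$.

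The main obstacle is the borderline case $\alpha=1$, where the cut-off $\Indicator_{B_1}(h)$ that is baked into the definition of $\delta_h u$ is tied to the unit scale rather than to the optimal scale $r=(B/A)^{1/2}$, so a naive version of the computation above would leave an unwanted $|p|$ term. The remedy is to observe that for $\alpha=1$ the hypothesis (A4) forces $\int_{B_{2s}\setminus B_s}hK(h)\dd h=0$ for every $s>0$; summing over dyadic annuli, $\int_{B_R\setminus B_\rho}hK(h)\dd h=0$ whenever $R/\rho$ is a power of two. Consequently the cut-off in $\delta_h u$ may be replaced by $\Indicator_{B_r}(h)$ for any dyadic $r$ without altering the value of the operator. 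Taking $r$ to be a dyadic number within a factor of two of $(B/A)^{1/2}$ and rerunning the Taylor/tail argument (now with no residual gradient term in either piece) gives $C\Lambda(Ar+Br^{-1})\le C(AB)^{1/2}$, which is the sharpened statement. Throughout, the constant $C$ depends on $\Lambda$ through Lemma~\ref{l:KernelIntegrationBounds} and on the lower bound $\alpha_0$ only through the geometric-series constants in that lemma, which remain bounded away from $\alpha=2$.
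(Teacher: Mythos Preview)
Your proof is correct and follows essentially the same approach as the paper: split at scale $r=(B/A)^{1/2}$, use the Taylor bound $\delta_h u(x)\le \frac A2|h|^2$ (plus $p\cdot h$ when $\alpha<1$) on $B_r$ and the $L^\infty$ bound on the complement, invoking the four estimates of Lemma~\ref{l:KernelIntegrationBounds} exactly as the paper does. Your handling of the $\alpha=1$ case is in fact slightly more explicit than the paper's---you spell out that one should take $r$ dyadic so that $B_1\triangle B_r$ decomposes into annuli on which (A4) forces $\int h\,K(h)\dd h=0$, whereas the paper simply asserts the vanishing of $\int_{B_1\triangle B_r}h\cdot p\,K(h)\dd h$ for the chosen $r$.
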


\begin{proof}
Since $\delta_h u(x)$ has a different form depending on $\alpha>1$, $\alpha=1$ and $\alpha<1$, we must divide the proof in these three cases.

We start with the case $\alpha<1$. In this case $\delta_h u(x) = u(x+h) - u(x)$. Let $r>0$ be arbitrary, then
\begin{align} 
	\int_{\R^d} \delta_h u(x) K(h) \dd y &= \int_{B_r} \delta_h u(x) K(h) \dd h + \int_{\R^d \setminus B_r} \delta_h u(x) K(h) \dd h, \nonumber \\
&\leq \int_{B_r} (p \cdot h + A|h|^2) K(h) \dd h + \int_{\R^d \setminus B_r} 2B \ K(h) \dd h,
\intertext{Using \eqref{e:integral-linear-Br}, \eqref{e:integral-square-Br} and \eqref{e:integral-value-tail}, we get}
&\leq C \left( |p| r^{1-\alpha} + A r^{2-\alpha} + B r^{-\alpha} \right). \label{e:ClassicalBoundsGoal1}
\end{align}
We finish the proof in the case $\alpha<1$ by picking $r = (B/A)^{1/2}$.

The case $\alpha>1$ is similar. In this case $\delta_h u(x) = u(x+h) - u(x) - p \cdot h$ and we get
\begin{align*} \int_{\R^d} \delta_h u(x) K(h) \dd h &= \int_{B_r} \delta_h u(x) K(h) \dd h + \int_{\R^d \setminus B_r} \delta_h u(x) K(h) \dd h, \\
&\leq \int_{B_r} A|h|^2 K(h) \dd h + \int_{\R^d \setminus B_r} (p \cdot h + 2B) \ K(h) \dd h,
\end{align*}
This time using \eqref{e:integral-square-Br}, \eqref{e:integral-linear-tail} and \eqref{e:integral-value-tail}, we again arrive at (\ref{e:ClassicalBoundsGoal1}), and conclude by picking the same $r = (B/A)^{1/2}$.

We are left with the case $\alpha=1$. In this case 
$\delta_h u(x) = u(x+h) - u(x) - p \cdot h \ \Indicator_{B_1}(h)$. For arbitrary $r>0$, we have
\[ \int_{\R^d} \delta_y u(x) K(h) \dd h = \int_{B_r} (u(x+h) - u(x) - p \cdot h) K(y) \dd h + \int_{\R^d \setminus B_r} (u(x+h) - u(x)) K(h) \dd h \pm \int_{B_1 \triangle B_r} h \cdot p \ K(h) \dd h \]
The last term on the right is equal to zero because of the assumption \eqref{e:assumption-odd}. Therefore, we can drop this term and use the other two to estimate the integral.

\begin{align*} \int_{\R^d} \delta_h u(x) K(h) \dd h &\leq \int_{B_r} A|h|^2 K(h) \dd h + \int_{\R^d \setminus B_r} 2B \ K(h) \dd h,
\intertext{Using \eqref{e:integral-square-Br} and \eqref{e:integral-value-tail}, we get}
&\leq C \left(A r + B r^{-1} \right).
\end{align*}
Picking $r = (B/A)^{1/2}$, we obtain
\[\int_{\R^d} \delta_h u(x) K(h) \dd h \leq C (AB)^{1/2}.\]
This concludes the proof in all cases.
\end{proof}

\begin{remark}
Lemma \ref{l:classical-bounds} requires an inequality to hold for $D^2 u$ in the full space $\R^d$. This does not require the function $u$ to be $C^2$ globally. What it means is that $u(x) - \frac A2 |x|^2$ is concave.
\end{remark}

\begin{cor} \label{c:classical-estimates-pucci}
Let $M^+_{\mathcal L}$ and $M^-_{\mathcal L}$ be the extremal operators defined above. Let $p = \grad u(x)$ and assume that $u$ satisfies the global bounds
\begin{align*}
-A_- I \leq D^2 u &\leq A_+ I, \\
|u| &\leq B.
\end{align*}
Then
\begin{align*}
M^+_{\mathcal L} u(x) &\leq C \left( \frac B A_+ \right)^{-\alpha/2} \left( B + \left( \frac B A_+ \right)^{1/2} |p| \right), \\
M^-_{\mathcal L} u(x) &\geq -C \left( \frac B A_- \right)^{-\alpha/2} \left( B + \left( \frac B A_- \right)^{1/2} |p| \right).
\end{align*}
Moreover, if $\alpha=1$, the estimate can be reduced to
\begin{align*}
M^+_{\mathcal L} u(x) &\leq C \left( B A_+ \right)^{1/2}, \\
M^-_{\mathcal L} u(x) &\geq -C \left( B A_- \right)^{1/2}.
\end{align*}
\end{cor}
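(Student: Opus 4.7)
The plan is to derive this corollary as a direct application of Lemma \ref{l:classical-bounds} together with the linearity of every $L \in \mathcal L$. Since every kernel $K$ defining an operator in $\mathcal L$ satisfies (A1)--(A4), it in particular satisfies (A2) and (A4), which are the only hypotheses Lemma \ref{l:classical-bounds} needs. Thus for any single $L \in \mathcal L$, the lemma yields the bound $Lu(x) \leq C(B/A_+)^{-\alpha/2}(B + (B/A_+)^{1/2}|p|)$ under the hypotheses $D^2 u \leq A_+ I$ and $|u| \leq B$. Since the right-hand side is independent of the particular choice of $K$, taking the supremum over $L \in \mathcal L$ yields exactly the stated bound on $M^+_{\mathcal L} u(x)$.

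For the bound on $M^-_{\mathcal L} u(x)$, I will use the reduction $M^-_{\mathcal L} u(x) = -M^+_{\mathcal L}(-u)(x)$. This identity holds because each $L \in \mathcal L$ is linear in its argument, so $\inf_{L} Lu = -\sup_{L}L(-u)$. Now set $v = -u$. The hypothesis $-A_- I \leq D^2 u$ translates into $D^2 v \leq A_- I$, and we still have $|v| \leq B$ and $|\grad v(x)| = |p|$. Applying the upper bound just proved (with $A_+$ replaced by $A_-$ and $u$ replaced by $v$) gives
\[
M^+_{\mathcal L} v(x) \leq C\Bigl(\frac{B}{A_-}\Bigr)^{-\alpha/2}\Bigl(B + \Bigl(\frac{B}{A_-}\Bigr)^{1/2}|p|\Bigr),
\]
and negating produces the claimed lower bound on $M^-_{\mathcal L} u(x)$.

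The special case $\alpha = 1$ is handled identically: Lemma \ref{l:classical-bounds} provides the sharper pointwise bound $Lu(x) \leq C(A_+ B)^{1/2}$ with no dependence on $|p|$ (this is where (A4) enters decisively to kill the odd part of the kernel on the annulus $B_1 \triangle B_r$), so taking the supremum over $L \in \mathcal L$ gives the estimate for $M^+_{\mathcal L}$, and the reflection $v = -u$ gives the one for $M^-_{\mathcal L}$.

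There is no real obstacle here; the content of the corollary is entirely contained in Lemma \ref{l:classical-bounds}, and the only two observations needed on top of it are that the constants in the lemma are uniform over the class $\mathcal K$ (they depend only on $\Lambda$ and $\alpha_0$), and that every $L \in \mathcal L$ is linear, so taking an infimum of $Lu$ is the same as negating the supremum of $L(-u)$.
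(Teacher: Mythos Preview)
Your proof is correct and follows essentially the same approach as the paper: apply Lemma \ref{l:classical-bounds} uniformly over all $K \in \mathcal K$ and take the supremum to get the $M^+_{\mathcal L}$ bound, then use the identity $M^-_{\mathcal L} u(x) = -M^+_{\mathcal L}[-u](x)$ for the lower bound. The paper's proof is even terser than yours, but the content is identical.
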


\begin{proof}
The estimate for $M^+_{\mathcal L}$ follows taking the supremum in $K$ in Lemma \ref{l:classical-bounds}. The estimate for $M^-_{\mathcal L}$ follows then since
\[ M^-_{\mathcal L} u(x) = -M^+_{\mathcal L}[-u](x).\]
\end{proof}

\section{Viscosity solutions}

We use a standard definition of viscosity solutions for integral equations which is the parabolic version of the one in \cite{caffarelli2009regularity} and equivalent under most conditions to the parabolic version of \cite{BaIm-07}.

\begin{defn}[cf. \cite{caffarelli2011regularity}-- Definition 21 and (1.2).] \label{d:ellipticity} We say $I$ is a nonlocal operator which is elliptic with respect to the class of operators in this article if $Iu(x)$ is well defined for any function $u \in Growth(\alpha)$ such that $u \in C^2(x)$ and moreover,
\[ M^-(u_1-u_2)(x) - C|\grad (u_1-u_2)(x)| \leq Iu_1(x) - Iu_2(x) \leq M^+(u_1-u_2)(x) + C|\grad (u_1-u_2)(x)|.\]
The constant $C$ must be equal to zero if $\alpha \leq 1$.

We say that $I$ is translation invariant if $I[u(\cdot-x_0)] = Iu(\cdot-x_0)$.
\end{defn}

Note that the operators $M^+$ and $M^-$ in particular are nonlocal operators, uniformly elliptic with respect to this class. These are the only operators that are needed for the main result in this article (Theorem \ref{thm:intro}). The main result has implications to nonlinear equations in terms of operators as in Definition \ref{d:ellipticity} which are given in section \ref{s:nonlinear}.

\begin{defn}[cf. \cite{caffarelli2009regularity}-- Definition 2.2 and \cite{caffarelli2011regularity}-- Definition 25 ] \label{d:viscosity}
Let $I$ be a nonlocal operator as in Definition \ref{d:ellipticity}. Assume that $u\in Growth(\al)$. We say $u: \R^d \times [T_1,T_2]$ satisfies the following inequality in the viscosity sense, and also refer to it as a viscosity supersolution of
\[ u_t - Iu \geq 0 \qquad \text{ in } \Omega \subset \R^d \times \R,\]
if every time there exist a $C^{1,1}$ function $\varphi : D \subset \Omega \to \R$ so that $\varphi(x_0,t_0) = u(x_0,t_0)$ and also $u \geq \varphi$ in $D \cap \{t \leq t_0\}$, then the auxiliary function
\[ v(x) = \begin{cases}
\varphi(x,t_0) & \text{if } (x,t_0) \in D,\\
u(x,t_0)  & \text{if } (x,t_0) \notin D.
\end{cases}
\]
satisfies
\[ v_t(x_0,t_0) - Iv(x_0,t_0) \geq 0.\]
\end{defn}

One of the most characteristic properties of viscosity solutions is that they obey the comparison principle. In the context of this article, we state it as follows.

\begin{prop} \label{p:comparisonpple}
Let $I$ be a translation invariant nonlocal operator which is uniformly elliptic in the sense of Definition \ref{d:ellipticity}. Let $u,v \in \R^n \times [0,T]$ be two continuous functions such that
\begin{itemize}
\item For all $x\in \R^n$, $u(x,0) \geq v(x,0)$. 
\item For all $x \in \R^n \setminus B_1$ and $t  \in [0,T]$, $u(x,t) \geq v(x,t)$.
\item $u_t - Iu \geq 0$ and $v_t - Iv \leq 0$ in $B_1 \times [0,T]$.
\end{itemize}
The $u(x,t) \geq v(x,t)$ for all $x \in B_1$ and $t \in [0,T]$.
\end{prop}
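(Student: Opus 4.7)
\textbf{Proof plan for Proposition \ref{p:comparisonpple}.} I would argue by contradiction using the doubling-of-variables technique adapted to the nonlocal viscosity framework of Definition \ref{d:viscosity}, following the pattern established in \cite{BaIm-07, caffarelli2009regularity}. Suppose for contradiction that $\sup_{B_1 \times [0,T]}(v - u) > 0$. First, replace $u$ with $\tilde u(x,t) := u(x,t) + \eta/(T-t)$ for small $\eta > 0$; since $I$ acts only on the spatial variable, $I\tilde u = Iu$, so $\tilde u_t - I\tilde u \geq \eta/(T-t)^2$, making $\tilde u$ a strict supersolution. The hypotheses yield $v \leq \tilde u$ at $t = 0$ and on $\R^n \setminus B_1$, while $\tilde u \to +\infty$ as $t \to T$, so for $\eta$ small enough the supremum of $v - \tilde u$ is still positive and attained at some interior point $(x_0, t_0) \in B_1 \times (0, T)$.

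Next I would double the variables with
\[
\Phi_\eps(x, y, t, s) := v(x, t) - \tilde u(y, s) - \frac{|x - y|^2}{\eps} - \frac{(t - s)^2}{\eps},
\]
and take a maximizer $(x_\eps, y_\eps, t_\eps, s_\eps)$ over $\bar B_1 \times \bar B_1 \times [0,T] \times [0,T]$. Standard estimates yield $|x_\eps - y_\eps|^2/\eps + (t_\eps - s_\eps)^2/\eps \to 0$ and, along a subsequence, $(x_\eps, t_\eps), (y_\eps, s_\eps) \to (x_0, t_0)$, so for small $\eps$ all four points lie in the interior. Up to additive constants, $\varphi(x, t) := |x - y_\eps|^2/\eps + (t - s_\eps)^2/\eps$ is a $C^{1,1}$ test function touching $v$ from above at $(x_\eps, t_\eps)$, while $\tilde\psi(y, s) := -|x_\eps - y|^2/\eps - (t_\eps - s)^2/\eps$ touches $\tilde u$ from below at $(y_\eps, s_\eps)$.

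Applying Definition \ref{d:viscosity} with small neighborhoods $B_\delta(x_\eps)$ and $B_\delta(y_\eps)$, the time derivatives of $\varphi$ and $\tilde\psi$ at the respective contact points both equal $2(t_\eps - s_\eps)/\eps$, so subtracting yields
\[
IV(x_\eps, t_\eps) - IU(y_\eps, s_\eps) \geq \frac{\eta}{(T - s_\eps)^2},
\]
where $V$ equals a translate of $\varphi(\cdot, t_\eps)$ on $B_\delta(x_\eps)$ and equals $v(\cdot, t_\eps)$ outside, and $U$ is the analogue for $\tilde u$. By translation invariance and the ellipticity of $I$, the left-hand side is bounded above by $M^+ W(0) + C|\grad W(0)|$, where $W(\xi) := V(x_\eps + \xi) - U(y_\eps + \xi)$. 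A direct computation using the identity $|\Delta + \xi|^2 + |\Delta - \xi|^2 = 2|\Delta|^2 + 2|\xi|^2$ (with $\Delta = x_\eps - y_\eps$) shows that on $B_\delta$ one has $W(\xi) - W(0) = 2|\xi|^2/\eps$ and $\grad W(0) = 0$, while outside $B_\delta$ the maximum property of $\Phi_\eps$ gives $W(\xi) \leq W(0)$. Hence $M^+ W(0)$ is bounded by the near-ball contribution, which is at most $C \Lambda \delta^{2-\al}/\eps$ by Lemma \ref{l:KernelIntegrationBounds}. Letting $\delta \to 0$ with $\eps$ fixed forces $\eta/(T - s_\eps)^2 \leq 0$, a contradiction.

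The main obstacle is handling the global character of the nonlocal operator: the viscosity test function only encodes local information around the contact point, but $I$ integrates over all of $\R^d$. The doubling-of-variables penalty is engineered precisely so that the global maximum of the shifted difference $W$ lies at $\xi = 0$, which is exactly what is needed to give $M^+ W(0)$ the favorable sign up to a controlled quadratic near-ball error. This global mechanism also explains why the hypothesis requires $u \geq v$ throughout $\R^n \setminus B_1$, not merely on $\partial B_1$.
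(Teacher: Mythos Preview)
Your doubling-of-variables argument is the standard route and is essentially what the paper has in mind: the paper does not prove Proposition~\ref{p:comparisonpple} at all but simply declares it ``by now standard'' and points to \cite{lara2014regularity}, \cite{silvestre2011differentiability}, \cite{caffarelli2009regularity}, and \cite{BaIm-07}---precisely the sources whose method you are reproducing. The paper further remarks that for its purposes only the special case where $v$ is a smooth (or piecewise smooth) barrier is ever used, in which case comparison follows directly from Definition~\ref{d:viscosity} without any doubling.

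Your outline is sound; the one place to be a bit more careful is the domain over which you maximize $\Phi_\eps$. You take the maximum over $\bar B_1\times\bar B_1\times[0,T]^2$, but to conclude $W(\xi)\le W(0)$ for \emph{all} $\xi\in\R^d$ you need $\Phi_\eps(x_\eps+\xi,y_\eps+\xi,t_\eps,s_\eps)\le\Phi_\eps(x_\eps,y_\eps,t_\eps,s_\eps)$ for arbitrary $\xi$, which requires the maximum to be global in space. The fix is standard: maximize over $\R^d\times\R^d$, using that $v\le\tilde u$ outside $B_1$ together with the quadratic penalty (and boundedness of $u,v$, which should be assumed) to force the maximizer into the interior for small $\eps$. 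With that adjustment your computation of $W$ and the $\delta\to0$ limit go through exactly as written.
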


The proof of Proposition \ref{p:comparisonpple} is by now standard. We refer the reader to \cite{lara2014regularity} (Corollary 3.1), \cite{silvestre2011differentiability} (Lemmas 3.2, 3.3), \cite{caffarelli2009regularity} (Theorem 5.2) and \cite{BaIm-07} for the main ideas. For the purposes of this article, we do not use the full power of Proposition \ref{p:comparisonpple}. We only use the comparison principle to compare a supersolution $u$ with a special barrier function constructed in section \ref{sec:Barrier}. This barrier function is explicit and is smooth, except on a sphere where it has an \emph{angle} singularity. The comparison principle follows easily from Definition \ref{d:viscosity} when $v$ is this special barrier function or any smooth subsolution of the equation.

In \cite{caffarelli2009regularity}, and many subsequent works, it was frequently used that wherever a viscosity solution $u$ can be touched with a $C^2$ test function from one side, the equation can be evaluated classically with the original $u$ at that particular point (a notable departure from the second order theory!).   This fact plays a role in some measure estimates used to prove the regularity results in those works. With our current setting, it is not possible to evaluate the equation pointwise in $u$ because of the gradient terms, however many possible useful variations on that theme can be shown-- similar to \cite[Appendix 7.2]{rang2013h}.  In this case, the following lemma is what we will use to obtain pointwise evaluation of the regularized supersolution.

\begin{lemma} \label{l:viscosity-classical}
Assume $u$ satisfies the following inequality in the viscosity sense
\[ u_t + C_0|\grad u| - M^- u \geq -C \text{ in } \Omega.\]
Assume also that there is a test function $\varphi:\R^d \times [t_1,t_2] \to \R$ so that $\varphi(x_0,t_0)=u(x_0,t_0)$ and $\varphi(x,t) \leq u(x,t)$ for all $t \in (t_0-\eps,t_0]$.

Then, the following inequality holds
\begin{align*}
\varphi_t(x_0,t_0) + &C_0 |\grad \varphi(x_0,t_0)| - M^- \varphi(x_0,t_0) \\
&- \inf\left\{ \int_{\R^d} \left( u(x+y,t_0) - \varphi(x+y,t_0) \right) K(y) \dd y\ :\ K\in \mathcal K \right\} \geq -C.
\end{align*}
\end{lemma}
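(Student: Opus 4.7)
The plan is to apply the viscosity supersolution definition (Definition \ref{d:viscosity}) with the given $\varphi$ restricted to a small cylindrical domain, and then pass to a suitable limit. First, I would fix a small $r > 0$ with $r < \eps$ and set $D_r := B_r(x_0) \times (t_0 - \eps, t_0]$. The hypotheses $\varphi \leq u$ on this cylinder and $\varphi(x_0, t_0) = u(x_0, t_0)$ mean the requirements of Definition \ref{d:viscosity} are met, and the definition produces the auxiliary function
\[
v_r(x) = \begin{cases} \varphi(x, t_0), & x \in B_r(x_0), \\ u(x, t_0), & x \notin B_r(x_0), \end{cases}
\]
which satisfies $\varphi_t(x_0, t_0) + C_0 |\grad \varphi(x_0, t_0)| - M^- v_r(x_0) \geq -C$.

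Second, I would decompose the nonlocal integral for an arbitrary $K \in \mathcal K$. Since $v_r(x_0) = \varphi(x_0, t_0)$ and $\grad v_r(x_0) = \grad \varphi(x_0, t_0)$, and $v_r = \varphi(\cdot, t_0)$ on $B_r(x_0)$ while $v_r = u(\cdot, t_0)$ outside, one computes
\[
\int_{\R^d} \delta_h v_r(x_0) K(h) \dd h = \int_{\R^d} \delta_h \varphi(x_0, t_0) K(h) \dd h + \int_{|h| \geq r}(u - \varphi)(x_0 + h, t_0) K(h) \dd h.
\]
Applying the sub-additivity inequality $\inf_K(f+g) \geq \inf_K f + \inf_K g$ then gives
\[
M^- v_r(x_0) \geq M^- \varphi(x_0, t_0) + \inf_{K \in \mathcal K} \int_{|h| \geq r}(u-\varphi)(x_0+h, t_0) K(h) \dd h,
\]
so the viscosity inequality upgrades to a truncated form of the desired conclusion, valid for every $r > 0$.

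The remaining step is to send $r \to 0$. For each fixed $K$, $\int_{|h| \geq r}(u-\varphi) K \dd h \uparrow \int_{\R^d}(u-\varphi) K \dd h$ by monotone convergence, since the integrand is nonnegative. The main obstacle will be to commute this monotone limit with the infimum over $K$, and this is where Lemma \ref{l:KernelIntegrationBounds} plays a crucial role. One would control the defect uniformly: since $(u - \varphi)(x_0, t_0) = 0$ and $(u - \varphi) \geq 0$, combined with the integrability bound $\int_{B_r} |h|^2 K(h) \dd h \leq C \Lambda r^{2-\alpha}$, provided one has quadratic control $(u - \varphi)(x_0 + h, t_0) \leq C|h|^2$ near $h = 0$, one obtains $\sup_{K \in \mathcal K}\int_{B_r}(u-\varphi)(x_0+h, t_0) K(h) \dd h \leq C r^{2-\alpha} \to 0$. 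This compatibility of the modulus of $(u-\varphi)$ with the $\alpha$-scaling of $K$ is automatic in the intended applications, where the lemma is applied to a supersolution that has been regularized by an inf-convolution and is thus semi-concave from above near $x_0$. Once uniform decay of the defect is secured, the limit commutes with the infimum and the lemma follows.
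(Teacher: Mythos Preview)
Your approach is essentially identical to the paper's: restrict $\varphi$ to a small cylinder, build the auxiliary function $v_r$, split the integral, use sub-additivity of the infimum to obtain
\[
M^- v_r(x_0,t_0) \;\geq\; M^- \varphi(x_0,t_0) \;+\; \inf_{K\in\mathcal K}\int_{|h|\geq r}(u-\varphi)(x_0+h,t_0)\,K(h)\,\dd h,
\]
and then let $r\to 0$. The paper simply observes that the last term is monotone increasing in $r^{-1}$ and declares the result follows; it does not discuss commuting the limit with the infimum.

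You are right that this commutation is not automatic from monotonicity alone (for an increasing family $g_r(K)\uparrow g_0(K)$ one has only $\sup_r\inf_K g_r\leq\inf_K g_0$, and the reverse inequality can fail). However, your proposed resolution is mistaken. In the only place this lemma is invoked---the proof of Lemma~\ref{l:local-measure-estimate}---it is applied to the \emph{raw} viscosity supersolution $u$, not to a function that has been regularized. The inf-convolution $q$ of \eqref{e:q} is an auxiliary object: it is $q$ that is semi-concave, not $u$. The test function $\varphi$ is precisely the downward paraboloid from the inf-convolution, touching $u$ from below at $y$; you only know $(u-\varphi)(y+h,t)\geq 0$, with no quadratic upper bound. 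So the uniform estimate $\sup_{K}\int_{B_r}(u-\varphi)K\leq C r^{2-\alpha}$ is unavailable here, and your closing argument does not go through for the intended application.
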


\begin{proof}
We can use $\varphi$ as the test function for Definition \ref{d:viscosity} in any small domain $D = B_r(x_0) \times (t_0-\eps,t_0]$. Constructing the auxiliary function $v$ we observe that
\begin{align*}
v_t(x_0,t_0) &= \varphi_t(x_0,t_0), \\
\grad v(x_0,t_0) &= \grad \varphi(x_0,t_0), \\
M^- v(x_0,t_0) &= \inf \left\{ \int_{\R^d} \delta_y \varphi(x) K(y) \dd y + \int_{\R^d \setminus B_r} \left( u(x+y) - \varphi(x+y) \right) K(y) \dd y\ :\ K\in \mathcal K \right\} \\
&\geq  M^- \varphi(x_0,t_0) +\inf \left\{\int_{\R^d \setminus B_r} \left( u(x+y) - \varphi(x+y) \right) K(y) \dd y\ :\ K\in\mathcal K \right\}
\end{align*}
Observe that the last term is monotone increasing as $r \to 0$.

From Definition \ref{d:viscosity}, we have that for any $r>0$, $v_t(x_0,t_0) + C_0|\grad v(x_0,t_0)| - M^- v(x_0,t_0) \geq -C_1$. The result of the Lemma follows by taking $r \to 0$.
\end{proof}

\section{Relating a point-wise value with an estimate in measure: the growth lemma}

In order to obtain the H\"older continuity of $u$, we need to show the following point-to-measure lemma which seems to originate in the work of Landis \cite{Landis-1971SecondOrderBookTranslation} (in some circles, it is known as a the \emph{growth lemma}).  It is a cornerstone of the regularity theory, it leads to the weak Harnack inequality, and it is one of the few places where the equation plays a fundamental role.
\begin{lemma} \label{l:growth-lemma}
There exists positive constants $A_0$ and $\delta_0$ depending on $\lambda$, $\Lambda$, $d$, $\alpha_0$ and $C_0$ so that if $\alpha > \alpha_0$ and if $u :\R^d \times (-1,0] \to \R$ is a function such that
\begin{enumerate}
\item $u \geq 0$ in the whole space $\R^d \times (-1,0]$.
\item  $u$ is a supersolution in $Q_1$, i.e.
\begin{equation}\label{e:GrowthLemmaSuperSolU}
u_t + C_0 |\grad u| - M^- u(x) \geq 0, \text{ in } Q_1.
\end{equation}
\item $\min_{Q_{1/4}} u \leq 1$,
\end{enumerate}
then 
\[ | \{u \leq A_0\} \cap Q_1 | \geq \delta_0. \]
\end{lemma}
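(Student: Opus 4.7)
The plan is to implement the classical barrier approach of Krylov--Safonov, adapted to the integro-differential parabolic setting and to kernels that are controlled only on average. I would subtract a bump-shaped barrier $\Phi$ from $u$ so that $u + \Phi$ is forced to attain a strictly negative minimum at some interior point $(x_*,t_*) \in Q_1$; at that minimum, the supersolution inequality \eqref{e:GrowthLemmaSuperSolU}, applied via Lemma \ref{l:viscosity-classical} to the test function $\varphi = c^* - \Phi$ with $c^* = (u+\Phi)(x_*,t_*)$, will produce an upper bound on the nonlocal integral $\inf_{K\in\mathcal K}\int w\, K\,\dd h$, where $w := (u+\Phi)(\cdot,t_*) - c^* \geq 0$. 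Assumption (A3), applied to a near-minimizing $K^* \in \mathcal K$, will then convert that integral bound into the desired measure estimate.

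\textbf{Barrier.} I would construct an explicit smooth $\Phi: \R^d \times [-1,0] \to \R$ satisfying: (i) $\Phi \leq -2$ on $Q_{1/4}$; (ii) $\Phi \geq 0$ on the parabolic boundary of $Q_1$ and on $(\R^d\setminus B_2)\times[-1,0]$; (iii) $\|\Phi\|_\infty,\|\nabla\Phi\|_\infty,\|\Phi_t\|_\infty$ uniformly bounded; (iv) $\|M^+\Phi\|_{L^\infty} \leq C_1$. A natural spatial template is a mollified truncation of the radial profile $\Phi_0(x) = -c_1(|x|^{-\beta} - 2^{-\beta})_+$, combined with a non-positive time factor that vanishes at $t=-1$ and equals a negative constant on the slab where $Q_{1/4}$ sits. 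Property (iv) follows from Corollary \ref{c:classical-estimates-pucci} (since (A2) furnishes only an averaged upper bound on $K$), and the odd-part contribution of any $K \in \mathcal K$ is absorbed by (A4).

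\textbf{Pointwise inequality and spatial measure estimate.} Hypothesis (3) provides $(x_0,t_0) \in Q_{1/4}$ with $u(x_0,t_0) \leq 1$, so $(u+\Phi)(x_0,t_0)\leq -1$. Combined with (ii) and $u\geq 0$, this forces $u+\Phi$ to attain a strictly negative minimum value $c^* \leq -1$ at an interior point $(x_*,t_*)\in Q_1$. The function $\varphi(x,t) = c^*-\Phi(x,t)$ touches $u$ from below throughout $\R^d \times (-\infty,t_*]$, so by Lemma \ref{l:viscosity-classical} together with $M^-\varphi = -M^+\Phi$, $\varphi_t = -\Phi_t$, $\nabla\varphi = -\nabla\Phi$,
\begin{equation*}
\inf_{K\in\mathcal{K}} \int_{\R^d} w(h)\, K(h)\,\dd h \,\leq\, -\Phi_t(x_*,t_*) + C_0|\nabla\Phi(x_*,t_*)| + M^+\Phi(x_*,t_*) \,\leq\, C_2.
\end{equation*}
Hence there exists $K^* \in \mathcal K$ with $\int w\, K^* \leq 2C_2$. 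Applying (A3) to $K^*$ at the unit annulus $B_2\setminus B_1$ yields a symmetric set $A \subset B_2\setminus B_1$ with $|A|\geq\mu|B_2\setminus B_1|$ on which $K^* \geq (2-\alpha)\lambda$, and Chebyshev gives $|\{h\in A : w(h)\geq M\}| \leq C_3/M$. Choosing $M$ large enough forces $|\{h\in A : w(h)<M\}| \geq \tfrac12 |A|$; on this set, the bound $w(h) < M$ combined with $c^* \leq 0$ and $-\Phi \leq \|\Phi\|_\infty$ gives $u(x_*+h,t_*) \leq M + \|\Phi\|_\infty =: A_0$, the desired spatial estimate at time $t_*$.

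\textbf{Space-time upgrade and main obstacle.} The previous step yields spatial measure at the single time $t_*$. To obtain the claimed space-time bound $|\{u\leq A_0\}\cap Q_1|\geq\delta_0$, I would engineer the time profile of $\Phi$ so that $u+\Phi$ remains below a negative threshold on a full time interval of length bounded below (e.g.\ by taking $\Phi$ equal to a fixed negative constant on $[-\tfrac12,0]$ in its time factor) and then re-run the pointwise argument at each time in this interval, concluding by Fubini. The principal technical obstacle is the construction of the barrier under the weak hypotheses (A2)--(A4): one cannot use a pointwise upper bound on $K$ (Corollary \ref{c:classical-estimates-pucci} must supply the bound on $M^+\Phi$), and the asymmetric part of $K$ must be controlled through (A4) rather than through symmetry of the barrier, which is an additional ingredient compared with previous treatments. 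A secondary subtlety is keeping all constants uniform across $\alpha$: the definition of $\delta_h$ changes at $\alpha=1$, and Lemma \ref{l:classical-bounds} must be invoked in its $\alpha$-robust form for the estimates to survive as $\alpha \to 2$ and as $\alpha \to \alpha_0$.
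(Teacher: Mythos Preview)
Your proposal has two genuine gaps, one of which you flag but do not resolve.

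\textbf{Robustness as $\alpha\to 2$.} At the contact point you obtain $\inf_{K}\int w\,K\le C_2$, choose a near-minimizer $K^*$, and apply (A3) on a \emph{single} annulus, say $B_{2r}\setminus B_r$. On the good set $A_r$ you only know $K^*\ge (2-\alpha)\lambda r^{-d-\alpha}$, so Chebyshev gives
\[
\bigl|\{h\in A_r:\ w(h)\ge M\}\bigr|\ \le\ \frac{C_2\,r^{d+\alpha}}{(2-\alpha)\lambda\,M}.
\]
To make this smaller than $\tfrac12|A_r|\approx c\,r^d$ you need $M\gtrsim r^{\alpha}/\big((2-\alpha)\lambda\big)$, hence $A_0\to\infty$ as $\alpha\to 2$. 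The statement asks for $A_0,\delta_0$ depending only on $\alpha_0$, so this is fatal as written. The paper circumvents this by a contradiction argument over \emph{all} dyadic rings in $B_{r_0}$ (Lemma~\ref{l:local-measure-estimate}): assuming the good set is small on every scale, the sum $\sum_k (2-\alpha)2^{-k(2-\alpha)}=(2-\alpha)/(1-2^{\alpha-2})$ appears, which stays bounded below as $\alpha\to 2$. A single ring cannot capture this.

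\textbf{The space--time upgrade.} Your argument produces a spatial slice estimate at the single time $t_*$ where the global minimum of $u+\Phi$ is attained. ``Re-running the pointwise argument at each time in an interval'' does not work: for the parabolic touching condition in Lemma~\ref{l:viscosity-classical} you need $\varphi\le u$ on $(\,t-\eps,t\,]$ with equality at $(x_*(t),t)$, i.e.\ the space--time minimum up to time $t$ must be achieved \emph{exactly at time $t$}. There is no reason this holds on a set of times of positive measure; making $\Phi$ constant in $t$ does not force it, since you only know $u\le 1$ at the single point $(x_0,t_0)$. A single time slice has space--time measure zero, so Fubini gives nothing.

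The paper handles the time direction by a completely different mechanism: it introduces the inf-convolution $q(x,t)=\min_{y}\{u(y,t)+64|x-y|^2\}$, shows via a comparison with an explicit subsolution (Lemma~\ref{l:truncated-parabola}, Corollary~\ref{c:half-Lipschitz}) that $q_t\ge -C_2$, and then by a simple integral argument (Lemma~\ref{l:qt-ae}) that $q_t\le A_1$ on a set $G$ of positive space--time measure. At each point of $G$ one has a touching paraboloid \emph{and} a bound on the time derivative, which is what allows the pointwise nonlocal estimate (Lemma~\ref{l:local-measure-estimate}) to be applied at every such point. A final Vitali covering, together with a bound on the ``gradient-type'' map $m$ (Lemma~\ref{l:mBr}), transfers the measure of $G$ to the measure of $\{u\le A_0\}$. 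Your barrier-at-a-single-minimum approach has no analogue of the set $G$, and that is the missing idea.
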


The following function, $q$, plays an important role in the proof of Lemma \ref{l:growth-lemma}.  It is actually an inf-convolution of $u$ with a quadratic, and it is defined as
\begin{equation} \label{e:q}  
	q(x,t) = \min_{y \in \overline B_1} u(y,t) + 64|x-y|^2. 
\end{equation}
Note $q$ is a nonnegative function.  We will prove a collection of properties of the function $q$, which will lead us to the proof of Lemma \ref{l:growth-lemma}.

The next barrier is used to find a bound for the rate at which $q$ can decrease with respect to $t$.
\begin{lemma} \label{l:truncated-parabola}
For a universal constant $C_1$, the function
\[ \varphi(x,t) = \max(0, f(t) - 64 |x|^2),\]
is a subsolution to
\[ \varphi_t + C_0 |\grad \varphi| - M^- \varphi \leq 0 \text{ in } \R^n \times (-\infty,0]. \]
The inequality holds classically at all points where $\varphi > 0$.

Here $f(t)$ is the (unique) positive solution to the (backward) ODE
\begin{align}\label{e:special-ode}
\begin{cases}
f(0) &= 0, \\
f'(t) &= -C_1 \left(f(t)^{1/2} + f(t)^{1-\alpha/2} \right)
\end{cases}
\end{align}
where $C_1$ is a constant depending on $\Lambda$ and $\alpha_0$ (such that $\alpha \geq \alpha_0$).
\end{lemma}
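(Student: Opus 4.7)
The plan is to verify the differential inequality classically at every point $(x_0,t_0)$ where $\varphi(x_0,t_0)>0$; this is the open set $\{|x_0|<f(t_0)^{1/2}/8\}$ on which $\varphi$ coincides with the smooth parabola $\bar\varphi(x,t):=f(t)-64|x|^2$. Hence $\varphi_t(x_0,t_0)=f'(t_0)$, $\grad\varphi(x_0,t_0)=-128\,x_0$, and $|\grad\varphi(x_0,t_0)|=128|x_0|\leq 16\,f(t_0)^{1/2}$. The entire task reduces to producing a universal lower bound for $M^-\varphi(x_0,t_0)$ that is compatible, via the ODE \eqref{e:special-ode}, with the drift term $C_0|\grad\varphi|$.

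Two global facts drive the estimate: (i) $\varphi\geq\bar\varphi$ pointwise with equality at $x_0$, and (ii) $0\leq\varphi\leq f(t_0)$. Fix any $K\in\mathcal K$ and split $L\varphi(x_0)=\int_{B_r}+\int_{\R^d\setminus B_r}$. On $B_r$, fact (i) together with $\grad\varphi(x_0)=\grad\bar\varphi(x_0)$ forces $\delta_h\varphi(x_0)\geq\delta_h\bar\varphi(x_0)$, which (since $\bar\varphi$ is purely quadratic) equals $-64|h|^2$ when $\alpha\geq 1$ and $-128\,x_0\cdot h-64|h|^2$ when $\alpha<1$; invoking \eqref{e:integral-square-Br} and, in the $\alpha<1$ case, \eqref{e:integral-linear-Br}, this contributes at least $-C\Lambda\bigl(r^{2-\alpha}+|x_0|\,r^{1-\alpha}\bigr)$. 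On the tail, fact (ii) gives $\delta_h\varphi(x_0)\geq -f(t_0)-(\text{drift correction})$, and applying \eqref{e:integral-linear-tail} together with \eqref{e:integral-value-tail} bounds the tail below by $-C\Lambda\bigl(f(t_0)\,r^{-\alpha}+|x_0|\,r^{1-\alpha}\bigr)$. Selecting the scale $r=f(t_0)^{1/2}$ collapses every term to the common form $f(t_0)^{1-\alpha/2}$ (using $|x_0|\leq f(t_0)^{1/2}/8$), so that $L\varphi(x_0)\geq -C_2\,f(t_0)^{1-\alpha/2}$ uniformly in $K\in\mathcal K$; passing to the infimum yields $M^-\varphi(x_0,t_0)\geq -C_2\,f(t_0)^{1-\alpha/2}$ with $C_2=C_2(\Lambda,\alpha_0)$. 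Combining the pieces gives
\begin{equation*}
\varphi_t+C_0|\grad\varphi|-M^-\varphi \;\leq\; f'(t_0)+16 C_0\,f(t_0)^{1/2}+C_2\,f(t_0)^{1-\alpha/2},
\end{equation*}
which is nonpositive by \eqref{e:special-ode} as soon as $C_1\geq 16C_0+C_2$, thereby fixing the universal constant.

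The principal obstacle is the $\alpha=1$ case, in which $\delta_h$ carries the indicator $\Indicator_{B_1}$. This is handled by the same cancellation already exploited in the proof of Lemma \ref{l:M+Properties}: when $\alpha=1$, assumption (A4) reads $\int_{B_{2\rho}\setminus B_\rho}hK(h)\,\dd h=0$, so a dyadic sum over annuli inside $B_1\setminus B_r$ (assuming $r\leq 1$; otherwise the annulus is empty and nothing needs to be done) makes the correction $\Indicator_{B_1}(h)\grad\varphi(x_0)\cdot h$ integrate to zero, restoring the clean near-field bound $\delta_h\bar\varphi(x_0)=-64|h|^2$ on $B_r$ and eliminating the linear moment on the tail. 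A secondary point is that the statement asserts only the classical inequality on $\{\varphi>0\}$; at points where $\varphi=0$ the subsolution property is automatic in the viscosity sense because any admissible test function touching $\varphi$ from above there is forced to be nonnegative. Finally, existence of the backward solution $f$ to \eqref{e:special-ode} with $f(0)=0$ is a routine construction of the maximal positive branch, since the right-hand side is continuous and vanishes only at $0$.
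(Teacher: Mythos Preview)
Your proof is correct and follows essentially the same approach as the paper. The paper's proof is shorter only because it packages the estimate $M^-\varphi\geq -Cf(t)^{1-\alpha/2}$ into a single citation of Corollary~\ref{c:classical-estimates-pucci} (using the global bounds $\|\varphi\|_{L^\infty}=f(t)$, $\|\grad\varphi\|_{L^\infty}\leq C\sqrt{f(t)}$, and $D^2\varphi\geq -128I$), whereas you unpack that same argument inline: the near/far splitting, the quadratic lower bound on $\delta_h\bar\varphi$, the choice $r=f(t)^{1/2}$, and the cancellation for $\alpha=1$ via (A4) are precisely the content of Lemma~\ref{l:classical-bounds}.
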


\begin{proof}
Note that for every fixed value of $t \in (-\infty,0]$, it holds that 
\[
\|\varphi\|_{L^\infty} = f(t),\ \ \|\grad \varphi\|_{L^\infty} \leq C \sqrt{f(t)},\ \text{and}\ \ 0\geq D^2 \varphi \geq -128 I.
\]
Applying Corollary \ref{c:classical-estimates-pucci},
\[ M^- \varphi \geq -C f(t)^{1-\alpha/2}.\]
Then, at all points where $\varphi > 0$ we have
\[ \varphi_t + C_0 |\grad \varphi| - M^- \varphi \leq f'(t) + C_0 C f(t)^{1/2} + C f(t)^{1-\alpha/2}.\]
The lemma then follows by choosing $C_1$ so that $f'$ dominates the right hand side.
\end{proof}

It is worth commenting that the ODE for $f$ in Lemma \ref{l:truncated-parabola} has a unique solution which is strictly positive for $t<0$. This function $f$ is differentiable and locally Lipschitz. The universal constant $C_2$ of the following result is the Lipschitz constant of $f$ in the interval $[-T,0]$, where $f(T) = -4$.

\begin{cor} \label{c:half-Lipschitz}
Assume $x \in B_{1/8}$ and $q(x,t) < 3$. Then there are positive universal constants $\tau$ and $C_2$ such that for $s \in (t-\tau,t)$, $q(x,s) - q(x,t) < C_2(t-s)$.
\end{cor}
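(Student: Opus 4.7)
The plan is to run a one-sided barrier argument that propagates the pointwise lower bound
\[
u(z, \sigma) \;\geq\; \max\bigl(0,\ q(x, \sigma) - 64|x - z|^2\bigr) \quad\text{for every } z \in \overline{B_1},
\]
which is built into the very definition of $q$, from $\sigma = s$ forward to $\sigma = t$, using as barrier the truncated paraboloid from Lemma~\ref{l:truncated-parabola}.

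Concretely, I would fix a universal ceiling $M_0$ (say $M_0 = 4$, so the barrier's spatial support sits well inside $B_1$) and set $M := \min\bigl(q(x, s),\, M_0\bigr)$. Let $\tau_M \leq 0$ be the unique time at which the ODE solution $f$ of Lemma~\ref{l:truncated-parabola} satisfies $f(\tau_M) = M$, and consider
\[
\psi(z, \sigma) \;:=\; \max\Bigl(0,\ f(\sigma - s + \tau_M) - 64|z - x|^2\Bigr),
\]
which by Lemma~\ref{l:truncated-parabola} (plus translation invariance of $M^-$, Lemma~\ref{l:M+Properties}) is a viscosity subsolution of $\psi_\sigma + C_0|\nabla \psi| - M^-\psi \leq 0$ on $[s,\, s - \tau_M]$, supported in $B_R(x)$ with $R = \sqrt{M/64}$. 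On the cylinder $B_R(x) \times [s, t]$ I then verify the hypotheses of Proposition~\ref{p:comparisonpple}: the inf-convolution bound gives $u(\cdot, s) \geq \psi(\cdot, s)$ at the bottom; the lateral condition $u \geq 0 = \psi$ on $\mathbb{R}^d \setminus B_R(x)$ is automatic; and $B_R(x) \subset B_1$ because $|x| < 1/8$ and $R \leq \sqrt{M_0/64} < 7/8$. This gives $u(z, t) \geq \psi(z, t)$ on $B_R(x)$; combining with the outside estimate $64|y-x|^2 \geq 64 R^2 = M \geq f(t - s + \tau_M)$ yields $u(y, t) + 64|x - y|^2 \geq f(t - s + \tau_M)$ uniformly in $y \in \overline{B_1}$. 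Minimizing over $y$ then gives $q(x, t) \geq f(t - s + \tau_M) \geq M - C_2(t - s)$, the last step by Lipschitz continuity of $f$ on $[-\tau, 0]$.

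A short case analysis closes the argument. If $q(x, s) \leq M_0$ then $M = q(x, s)$ and the conclusion above reads $q(x, s) - q(x, t) \leq C_2(t - s)$, as desired (strictness can be recovered by choosing $C_2$ strictly above the actual Lipschitz constant). If instead $q(x, s) > M_0$, then $M = M_0$ yields $q(x, t) \geq M_0 - C_2(t - s)$; combined with the hypothesis $q(x, t) < 3$, this forces $t - s > (M_0 - 3)/C_2$, which contradicts $t - s < \tau$ provided we fix $\tau < (M_0 - 3)/C_2$ universally. The one remaining technicality is the standing requirement $t - s \leq -\tau_M$ (so that the barrier is still positive at time $t$); when this fails, one has $M = q(x, s) < f(-(t-s)) \leq C_2(t - s)$, and the claim is immediate from $q(x, t) \geq 0$. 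The main obstacle is precisely this bookkeeping---reconciling the absence of any a priori bound on $q(x, s)$ with the need for the barrier's support to fit in $B_1$---which the case analysis resolves by using the desired conclusion itself, via contradiction, to rule out the regime where $q(x, s)$ is too large.
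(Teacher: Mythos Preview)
Your proof is correct and follows essentially the same approach as the paper: translate the truncated-paraboloid barrier from Lemma~\ref{l:truncated-parabola} so that its peak sits at height $\min(q(x,s),4)$ at time $s$, apply comparison to propagate the lower bound forward to time $t$, use the Lipschitz bound on $f$, and rule out the regime $q(x,s)>4$ by contradiction with $q(x,t)<3$ and the choice of $\tau$. The only cosmetic differences are that the paper runs comparison on $B_1$ (checking $\bar\varphi\leq 0$ on $\partial B_1$) and evaluates directly at the minimizer $y$ for $q(x,t)$, whereas you run comparison on the support $B_R(x)$ and then minimize over $y$ afterward; and you include an explicit treatment of the case $t-s>-\tau_M$, which the paper leaves implicit.
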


\begin{proof}
	We let $x$, $t$, and $s$ be fixed as stated. Let $y$ be the point where the minimum for $q(x,t)$ is achieved in \eqref{e:q}.  Using the definition of $q$ we note for all values of $z \in B_1$, $u(z,s) \geq q(x,s) - 64 |x-z|^2$.

The point of the proof is to use the fact that $u$ and $\varphi$ are respectively super and sub solutions of the equation (\ref{e:GrowthLemmaSuperSolU}) on the time interval $(s,0]$.  In order to invoke a comparison result between them, we will make various choices involving $\tau$ and $f$ to enforce $\varphi$ to be below $u$ at the initial time, $s$, and on the boundary, which is $\R^d\setminus B_1$. 	

We define the function
\begin{equation*}
	\bar \varphi(\bar x, \bar t) := \varphi( \bar x - x, \bar t - s + t_0),
\end{equation*}
where $t_0$ is a fixed time, yet to be chosen.  We fix the constant $\tau$ so that
\begin{equation*}
	\tau < f^{-1}(3) - f^{-1}(4),
\end{equation*}
and we fix the time $t_0 < 0$ so that
\begin{equation*}
	f(t_0) = \min(q(x,s),4).
\end{equation*}
Checking the boundary condition for $\bar x\not\in B_1$ and $\bar t > s$ we see that $\abs{x-\bar x}\geq 7/8$ (as $x\in B_{1/8}$), and hence since $f(t_0)\leq 4 \leq 49$ we have (note $f$ is decreasing)
\begin{align*}
	\bar\varphi(\bar x,\bar t) = \varphi(\bar x - x, \bar t - s + t_0) &= \max(0, f(\bar t - s +t_0) - 64\abs{x-\bar x}^2) \leq \max(0, f(t_0) - 49)\leq 0.
\end{align*}
Checking the initial condition at $\bar t=s$ we have (by the definition of $t_0$)
\begin{equation*}
	\bar\varphi(\bar x,s) = \varphi(\bar x - x, t_0) = \max(0, f(t_0) - 64\abs{x-\bar x}^2)\leq \max(0, q(x,s) - 64\abs{x-\bar x}^2) \leq u(\bar x,s),
\end{equation*}
from the definition of $q$.

Comparison therefore tells us that $u\geq \bar\varphi$ on $B_1\times(s,0)$, and in particular for $\bar x=y$ and $\bar t = t$,
\[ u(y,t) \geq \varphi(x-y,t-s+t_0) \geq f(t-s+t_0)-64 |x-y|^2. \]
Hence
\begin{equation*}
q(x,t) = u(y,t) + 64 |x-y|^2 \geq f(t-s+t_0),
\end{equation*}
and we will use
\begin{equation*}
	q(x,t) \geq f(t-s+t_0) \geq f(t_0) - \abs{f'(t_0)}(t-s).
\end{equation*}
In the case that $f(t_0)=q(x,s)$, we can conclude the corollary with $C_2:=\max\{f'(t): t \in (-f^{-1}(4),0)\}$.  However, $\tau$ was chosen specifically so that it is impossible for $f(t_0) < q(x,s)$.  Indeed we see that if it occurred that $f(t_0)=4$ then because $f$ is decreasing and $t-s\leq \tau$, it holds
\begin{align*}
	3 > q(x,t) \geq f(t-s+t_0) \geq f(t_0) + f(\tau+t_0) - f(t_0) \geq 4 + f(f^{-1}(3)) - 4 = 3,
\end{align*}
which is a contradiction.  Thus $f(t_0)=q(x,s)$ is the only possibility, and we conclude.

\end{proof}

Corollary \ref{c:half-Lipschitz} should be interpreted as that $q_t \geq -C_2$ everywhere. The next lemma gives us a bound above for $q_t$ in a set of positive measure.

\begin{lemma} \label{l:qt-ae}
Under the assumptions of Lemma \ref{l:growth-lemma}, (but assuming here $u(0,0)=1$) the function $q$ from \eqref{e:q} satisfies that $|\{ q_t \leq A_1\} \cap Q_1| \geq \del_1 > 0$, where $A_1$ and $\del_1$ are universal constants.
\end{lemma}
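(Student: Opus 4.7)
The plan is to combine the pointwise upper bound for $q(\cdot,0)$ (coming from $u(0,0)=1$) with the one-sided temporal control from Corollary \ref{c:half-Lipschitz}, and then conclude via an integration-plus-Chebyshev argument. The heart of the matter is that on the region where $q<3$, the auxiliary function $\tilde q(x,t):=q(x,t)+C_2 t$ is monotone non-decreasing in $t$ (for each fixed $x\in B_{1/8}$), so $q_t$ exists almost everywhere and a one-sided fundamental theorem of calculus applies.

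First I would take $y=0$ in the definition \eqref{e:q} and use $u(0,0)=1$ to obtain $q(x,0)\leq 1+64|x|^2\leq 2$ for every $x\in B_{1/8}$. Since $q(x,0)<3$, Corollary \ref{c:half-Lipschitz} at $t=0$ gives
\[ q(x,s) < 2+C_2|s| \quad\text{for } s\in(-\tau,0), \]
which remains strictly less than $3$ as long as $|s|<1/C_2$. Fixing the universal constant $c:=\tfrac{1}{2}\min(\tau,1/C_2,1)>0$, the cylinder $P:=B_{1/8}\times(-c,0]$ is contained in $Q_1$ and satisfies $q<3$ throughout. Consequently, Corollary \ref{c:half-Lipschitz} applies at every point of $P$, so $\tilde q(x,\cdot)$ is non-decreasing on $[-c,0]$ for each such $x$.

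By the FTC for monotone functions, $\tilde q(x,\cdot)$ is differentiable a.e.\ in $t$, hence so is $q(x,\cdot)$, and
\[ \int_{-c}^0 q_t(x,s)\,ds \leq q(x,0)-q(x,-c) \leq 2, \]
using $q\geq 0$. Integrating in $x$ over $B_{1/8}$ gives $\int_P q_t\leq 2|B_{1/8}|$. The same monotonicity yields the pointwise bound $q_t\geq -C_2$ a.e.\ on $P$. Now for any $A_1>0$, set $F:=\{(x,t)\in P:q_t(x,t)>A_1\}$; splitting the integral gives
\[ 2|B_{1/8}| \;\geq\; A_1|F| - C_2\bigl(|P|-|F|\bigr), \]
so $|F|\leq (2+C_2 c)|B_{1/8}|/(A_1+C_2)$. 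Choosing a universal $A_1>4/c+C_2$ yields $|F|<c|B_{1/8}|/2$; on the complement $P\setminus F$ we have $q_t\leq A_1$, and this complement has measure at least $\delta_1:=c|B_{1/8}|/2$, which proves the claim since $P\subset Q_1$.

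The main (minor) technical point is that $q$ is not claimed to be absolutely continuous in $t$, since Corollary \ref{c:half-Lipschitz} supplies only a one-sided Lipschitz-type bound. The monotonicity trick for $\tilde q$ sidesteps this: the FTC for monotone functions provides exactly the one-sided inequality $\int q_t\,ds\leq q(x,0)-q(x,-c)$ that the argument needs, and no stronger regularity of $q$ is required. The rest is bookkeeping.
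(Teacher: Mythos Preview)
Your proposal is correct and follows essentially the same approach as the paper's own proof: use $u(0,0)=1$ to bound $q(\cdot,0)$ on $B_{1/8}$, bootstrap Corollary~\ref{c:half-Lipschitz} to show $q+C_2 t$ is monotone in $t$ on a small time interval where $q<3$, then integrate and apply a Chebyshev-type splitting to find a large set where $q_t\leq A_1$. Your handling of the time-interval size (taking $c\leq\tau/2$) and your explicit invocation of the one-sided FTC for monotone functions are in fact slightly more careful than the paper's presentation.
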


\begin{proof}
Since $u(0,0)=1$, for any $x \in B_{1/4}$ we have $q(x,0) \leq 1+ 64|x|^2 < 5$. Moreover, the minimum is achieved at some $y \in B_{1/2}$ since $1+64|y-x|^2 > 5$ if $|y|>1/2$.  By a similar reasoning we also have that for every $x \in B_{1/8}$, it holds that $q(x,0) < 2$.  Corollary \ref{c:half-Lipschitz} implies that for $t\in(-\tau,0]$
\[
q(x,t) \leq q(x,0) + C_2\abs{t}< 2 + C_2\abs{t}.
\]
Thus if we restrict $t\in (-\tau',0]$, where $\tau'=1/C_2$, then we have that $q(x,t)<3$ and a second application of Corollary \ref{c:half-Lipschitz} shows that $q(x,t) + C_2t$ is monotone increasing.  Thus $q_t(x,t)$ exists pointwise for a.e. $t\in(-\tau',0]$ and $q_t$ exists as a signed measure.  Furthermore,  
\[
q_t(x,t) \geq -C_2\ \ \text{for a.e.}\ t\in(-\tau',0].
\]
Integrating the measure $q_t(x,t)$ and ignoring its singular part shows (note, $q\geq0$ always)
\begin{align*}
C=2\abs{B_{1/8}} &\geq \int_{B_{1/8}} q(x,0) - q(x,-\tau') \dd x, \\
&\geq \int_{-\tau'}^0 \int_{B_{1/8}}  q_t(x,s) \dd x \dd s, \\
&\geq A_1 |\big( (-\tau',0] \times B_{1/8} \big) \cap \{ q_t > A_1\}| - C_2 |\big((-\tau',0] \times B_{1/8}\big) \setminus \{ q_t > A_1\}| \\
&= -C_2 \tau' |B_{1/8}| + (A_1+C_2) |\big((-\tau',0] \times B_{1/8}\big) \cap \{ q_t > A_1\}|.
\end{align*}

Therefore, rearranging shows that
\[ |\big((-\tau',0] \times B_{1/8}\big) \cap \{q_t > A_1\}| \leq \frac{C+C_2\tau'}{A_1+C_2}.\]
We can make the right-hand side arbitrarily small by choosing $A_1$ large. In particular, we choose $A_1$ sufficiently large (depending only on universal constants) so that we have
\[ |\big((-\tau',0] \times B_{1/8}\big) \cap \{q_t \leq A_1\}| \geq \frac 12 \tau' |B_{1/8}| =: \del_1.\]
\end{proof}

After Corollary \ref{c:half-Lipschitz} and Lemma \ref{l:qt-ae}, we obtain a set of positive measure where $|q_t|$ is bounded. At this points, we can use ideas from the stationary case to proceed with the rest of the proof.

The next lemma replaces Lemma 8.1 in \cite{caffarelli2009regularity}.  We in fact prove a slightly modified version of the lemma which enforces a quadratic growth of $\del_h u$ simultaneously on \emph{two} rings. In the proof of Theorem 8.7 and Lemma 10.1 in \cite{caffarelli2009regularity}, there is a cube decomposition plus a covering argument. It could be replaced by a double covering argument. In this paper we will have a simpler covering argument using Vitali's lemma only once. This is possible thanks to the stronger measure estimate in the next lemma (in two simultaneous rings).

\begin{lemma} \label{l:local-measure-estimate}
Let $\mu$ be the constant in (\ref{e:assumption-below}) and $c_0<1$ be an arbitrary constant. Let $y$ be the point in $B_{1/2}$ where the minimum of \eqref{e:q} is achieved, and $u$ satisfies \eqref{e:GrowthLemmaSuperSolU}. Assume that $x \in B_{1/4}$, $q(x,t) < 3$ and $q_t(x,t) < A_1$.  Then, for $A_2$ sufficiently large (depending on $C_1$, $\mu_1$, $\lambda$, $\Lambda$, $c_0$ and $\al_0$ but not on $\alpha$) we have that there exists some $r \leq r_0$ so that both
\begin{equation} 
|\{ h \in B_{2r} \setminus B_r : \delta_h u(y,t) \leq A_2 r^2 \text{ and } \delta_{-h} u(y,t) \leq A_2 r^2 \} | \geq \frac \mu 2 |B_{2r} \setminus B_r|
\end{equation}
and 
\begin{equation} 
|\{ h \in B_{2c_0r} \setminus B_{c_0r} : \delta_h u(y,t) \leq A_2 (c_0r)^2 \text{ and } \delta_{-h} u(y,t) \leq A_2 (c_0r)^2 \} | \geq \frac \mu 2 |B_{2c_0r} \setminus B_{c_0r}|,
\end{equation}
hold simultaneously for $r$ and $c_0r$.
Here $r_0 = 4^{-1/(2-\alpha)}$, and we note that $r_0 \to 0$ as $\alpha \to 2$.
\end{lemma}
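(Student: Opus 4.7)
The plan is to use the paraboloid coming from the inf-convolution $q$ as a smooth test function touching $u$ from below at $(y,t)$, push this through the supersolution inequality via Lemma \ref{l:viscosity-classical} to obtain an integral bound for $u-\varphi$ against an extremal kernel, and then combine (A3) with a dyadic summation across scales to produce a contradiction if the conclusion fails at both $r$ and $c_0r$ for every $r\leq r_0$.

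\emph{Step 1 (Test function).} By definition of $q$, the minimizer $y$ makes $z\mapsto q(x,t)-64|x-z|^2$ touch $u(\cdot,t)$ from below at $y$. The hypothesis $q_t(x,t)<A_1$ (combined with Corollary \ref{c:half-Lipschitz} to make the time-derivative meaningful) gives $q(x,s)\geq q(x,t)+A_1(s-t)-o(t-s)$ for $s<t$ close to $t$, so after a harmless $\eta(t-s)$ subtraction the space--time quadratic
$$\varphi(z,s):=q(x,t)+A_1(s-t)-64|x-z|^2$$
is a valid smooth test function touching $u$ from below at $(y,t)$ on a backward parabolic neighborhood. Note $\varphi_t(y,t)=A_1$, $|\nabla\varphi(y,t)|=128|x-y|$ is bounded since $x\in B_{1/4}$ and $y\in B_{1/2}$, and $D^2\varphi=-128 I$.

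\emph{Step 2 (Integral estimate from the equation).} Applying Lemma \ref{l:viscosity-classical} with $\varphi$ (using a bounded smooth truncation of $\varphi$ so that Corollary \ref{c:classical-estimates-pucci} applied to $-\varphi$ controls $-M^-\varphi(y,t)$ by a constant that stays bounded as $\alpha\to 2$) yields
$$\inf_{K\in\mathcal K}\int_{\R^d}\bigl(u(y+h,t)-\varphi(y+h,t)\bigr)K(h)\,\dd h\leq C_3,$$
with $C_3$ depending only on $A_1,C_0,C_1,\lambda,\Lambda,\alpha_0$ and \emph{not} on $\alpha$. Fix some $K\in\mathcal K$ attaining this up to $\varepsilon$. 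For any $r\leq r_0$, (A3) applied to this $K$ gives a symmetric set $A_r\subset B_{2r}\setminus B_r$ of measure $\geq\mu|B_{2r}\setminus B_r|$ on which $K\geq(2-\alpha)\lambda r^{-d-\alpha}$, and Chebyshev produces
$$\bigl|\{h\in A_r:u(y+h,t)-\varphi(y+h,t)>A_2r^2\}\bigr|\leq\frac{C_3\,r^{d+\alpha-2}}{(2-\alpha)\lambda A_2}.$$
Whenever this bound is $<\tfrac{\mu}{4}(2^d-1)r^d$, the identity $\delta_h u(y,t)=\delta_h\varphi(y)+(u(y+h,t)-\varphi(y+h,t))\leq-64|h|^2+A_2r^2\leq A_2r^2$ (with $\nabla u(y,t)$ taken to be $\nabla\varphi(y,t)$) together with $A_r=-A_r$ produces the conclusion at scale $r$.

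\emph{Step 3 (Two-scale pigeonhole and dyadic summation).} Suppose for contradiction that for every $r\leq r_0$ the estimate fails at either $r$ or $c_0r$. Along the geometric sequence $r_k:=c_0^kr_0$, the fail-set $F\subset\mathbb N$ then meets every pair $\{k,k+1\}$, so contains one of every two consecutive integers. For each $k\in F$ the Chebyshev bound above is violated, so the annulus $A_{r_k}$ contributes at least
$$A_2r_k^2\cdot(2-\alpha)\lambda r_k^{-d-\alpha}\cdot\tfrac{\mu}{4}(2^d-1)r_k^d=c_1A_2(2-\alpha)r_k^{2-\alpha}$$
to $\int(u-\varphi)K$. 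Summing over $k\in F$ (handling any overlap between annuli when $c_0>1/2$ by a fixed bounded-multiplicity decomposition) and using $r_0^{2-\alpha}=1/4$ and the geometric identity
$$(2-\alpha)\sum_{k\in F}r_k^{2-\alpha}\geq\frac{(2-\alpha)\,c_0^{2-\alpha}\,r_0^{2-\alpha}}{1-c_0^{2(2-\alpha)}}\geq\kappa(c_0,\alpha_0)>0,$$
one obtains $C_3\geq c_1\kappa A_2$, which is impossible once $A_2$ is chosen large enough in terms of $C_3,c_1,c_0,\mu,\lambda,\Lambda,\alpha_0$.

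The main obstacle is the delicate cancellation between the two $(2-\alpha)$ factors: the lower bound in (A3) carries a vanishing factor $(2-\alpha)\lambda$, which makes a single-scale Chebyshev estimate degenerate as $\alpha\to 2$, but this is exactly compensated by the divergence $1/(1-c_0^{2(2-\alpha)})\sim 1/(2(2-\alpha)\log(1/c_0))$ of the geometric dyadic sum. This balancing, which requires summing over infinitely many scales, is precisely why the lemma is formulated over \emph{two} scales and why $A_2$ can ultimately be chosen independent of $\alpha$. Secondary technical points are engineering a bounded $C^{1,1}$ truncation of the unbounded quadratic $\varphi$ so that Corollary \ref{c:classical-estimates-pucci} applies, and (for $\alpha\leq 1$) assigning $\nabla u(y,t):=\nabla\varphi(y,t)$ at the touching point so that the definition of $\delta_h u(y,t)$ is meaningful.
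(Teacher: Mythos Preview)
Your overall architecture---touch $u$ from below with the inf-convolution paraboloid, apply Lemma~\ref{l:viscosity-classical}, then run a contradiction via a geometric summation across scales---is exactly the paper's approach, and your robustness discussion (the $(2-\alpha)$ cancellation) is correct.  One organizational difference: the paper sums over the \emph{dyadic} scales $r_j=2^{-j-1}r_0$ and, for each $j$, records the failure at either $r_j$ or $c_0r_j$ as $\tilde r_j$; you instead sum over the $c_0$-adic scales $r_k=c_0^kr_0$ and use that the fail-set meets every consecutive pair.  Both work; yours needs the bounded-multiplicity correction you mention, while the paper's needs the factor $1/2$ to pass from $\sum_j\int_{B_{2r_j}\setminus B_{r_j}}$ to $\sum_j\int_{B_{2\tilde r_j}\setminus B_{\tilde r_j}}$.

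There is, however, a genuine slip in how you bridge Step~2 and Step~3.  The Chebyshev inequality you write in Step~2 is an \emph{upper} bound on the bad set, always valid; its contrapositive (``if the conclusion fails at $r_k$, then the Chebyshev right-hand side exceeds the threshold'') yields only $C_3\geq c_1A_2(2-\alpha)r_k^{2-\alpha}$ for each single $k$, which cannot be summed.  What your Step~3 formula actually requires---and what the paper proves directly---is the stronger localized statement: if the conclusion fails at $r_k$, then the set $\{h\in A_{r_k}:w(h)>(A_2+64)r_k^2\text{ or }w(-h)>(A_2+64)r_k^2\}$ (with $w(h)=u(y+h,t)-\varphi(y+h,t)$) has measure at least $\tfrac{\mu}{2}|B_{2r_k}\setminus B_{r_k}|$, and hence $\int_{B_{2r_k}\setminus B_{r_k}}w(h)K(h)\,\dd h$ itself is at least $c_1A_2(2-\alpha)r_k^{2-\alpha}$.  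This \emph{annulus-by-annulus} lower bound is what sums.  So drop the Chebyshev detour entirely: take the contrapositive of your implication ``$X_{r_k}<\text{threshold}\Rightarrow\text{conclusion at }r_k$'' to get $X_{r_k}\geq\text{threshold}$, and then bound the integral over that annulus from below using only the lower bound on $K$ from (A3) and the lower bound on $w$ on the bad set.  With that correction your argument is complete.
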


In Lemma \ref{l:local-measure-estimate}, we abuse notation by writing 
\[ \delta_h u(y,t) = u(y+h,t) - u(y,t) - 128(x-y) \cdot h,\]
even though $\grad u(y,t)$ may not exist. Note that if $u$ happens to be differentiable at $(y,t)$, then $\grad u(y,t) = 128 (x-y)$ because of \eqref{e:q}.
The value of $c_0$ will be selected as a universal constant in Lemma \ref{l:mBr}.

\begin{proof}
From the construction of $x$ and $y$, we have that 
$u(y,t) = q(x,t) - 64 |x-y|^2$. Moreover, $u(z,s) \geq q(x,s) - 64 |x-z|^2$ for any $z \in\R^n$ and $s \leq t$. Since we are assuming that $q_t(x,t) < A_1$ (in particular that $q_t$ exists at that point), there is an $\eps>0$ so that $q(x,s) > q(x,t) - A_1 (t-s)$ for $s \in (t-\eps,t]$. Consequently, $u(z,s) \geq q(x,t) - 64 |x-z|^2 - A_1 (t-s)$ for $s \in (t-\eps,t]$.

Let 
\[\varphi(z,s) := \max \left( q(x,t) - 64 |x-z|^2 - A_1 (t-s) , -256\right).\]
The choice of the number $-256$ is made so that the maximum is always achieved by the paraboloid every time $z \in B_1$. From the analysis above, we have that $u \geq \varphi$ in $\R^n \times (t-\eps,t]$ and $u(y,t) = \varphi(y,t)$. Note that since $q(x,t) < 3$, then $|\grad \varphi(y,t)| \leq 16\sqrt{3}$. Also, from Lemma \ref{l:classical-bounds}, since $D^2 \varphi \geq -128 I$, then $M^- \varphi(y,t) \geq -C$ for some universal constant $C$. We apply Lemma \ref{l:viscosity-classical} and we get
\begin{align*}
0 &\leq \varphi_t(y,t) + C_0 |\nabla \varphi(y,t)| - M^- \varphi(y,t) - \inf \left\{ \int_{\R^d} (u(y+h,t)-\varphi(y+h,t)) K(h) \dd h : K\in\mathcal K \right\}, \\
&\leq A_1 + C_0 |\nabla \varphi(y,t)| - M^- \varphi(y,t) - \inf \left\{ \int_{\R^d} (u(y+h,t)-\varphi(y+h,t)) K(h) \dd h : K\in\mathcal K \right\}, \\
&\leq C - \inf \left\{ \int_{\R^d} (u(y+h,t)-\varphi(y+h,t)) K(h) \dd h : K\in\mathcal K \right\}.
\end{align*}

Note that $u(y+h,t)-\varphi(y+h,t) \geq 0$ for all values of $h \in \R^n$. We abuse notation by calling
\[ \delta_h u(y,t) = u(y+h,t) - u(y,t) - h \cdot \grad \varphi(y,t).\]
Note that
\[ u(y+h,t)-\varphi(y+h,t) = \delta_h u(y,t) - \delta_h \varphi(y,t).\]
And $\delta_h \varphi(y,t) = -64 |h|^2$ whenever $y+h \in B_1$. 

Using that the integrand is positive, we can reduce its domain of integration to an arbitrary subset of $\R^n$.
\begin{align*} C &\geq \inf \left\{ \int_{B_{r_0}} (u(y+h,t)-\varphi(y+h,t)) K(h) \dd h : K\in\mathcal K \right\}, \\
&= \inf \left\{ \int_{B_{r_0}} (\delta_h u(y,t)+64 |h|^2) K(h) \dd h : K\in\mathcal K \right\}.
\end{align*}
Let us call $w(h) := \delta_h u(x,t) + 64 |h|^2 \geq 0$ for $h \in B_{r_0}$. We have that there exists an admissible kernel $K$ such that

\begin{equation} \label{e:r1}  C \geq \int_{B_{r_0}} w(h) K(h) \dd h.
\end{equation}

Let $r \leq r_0 = 4^{-1/(2-\alpha)}$. From \eqref{e:assumption-below}, we know that
\begin{equation}\label{e:rRecallKernelBounds} 
|\{ h \in B_{2r} \setminus B_r : K(h) > (2-\alpha) \lambda r^{-d-\alpha} \text{ and }  K(-h) > (2-\alpha) \lambda r^{-d-\alpha} \} | > \mu |B_{2r} \setminus B_r|.
\end{equation}
In order to obtain a contradiction, let us assume that the result of the Lemma is false. That is, for all $r \leq r_0$, either 
\begin{equation} \label{e:r2}  | \{h \in B_{2r} \setminus B_r : w(h) > (A + 64)r^2 \text{ or } w(-h) > (A+64) r^2 \}| > (1-\mu/2) |B_{2r} \setminus B_r|
\end{equation}
or
\begin{equation} \label{e:r2c0}  | \{h \in B_{2c_0r} \setminus B_{c_0r} : w(h) > (A+64) (c_0r)^2 \text{ or } w(-h) > (A+64) (c_0r)^2 \}| > (1-\mu/2) |B_{2c_0r} \setminus B_{c_0r}|.
\end{equation}

Therefore, the intersection of the set in (\ref{e:rRecallKernelBounds})-- with $r$ appropriately chosen in each case-- with either of that in (\ref{e:r2}) or (\ref{e:r2c0}) must have measure at least $\mu/2 |B_{2r} \setminus B_r|$ or $\mu/2 |B_{2c_0r} \setminus B_{c_0r}|$, depending on which of the two possibilities occurred.  Let us set $\tilde r$ to be either $r$ or $c_0r$, depending upon whether we will invoke (\ref{e:r2}) or (\ref{e:r2c0}). Let us call $G_{\tilde r}$ this intersection between the sets (\ref{e:rRecallKernelBounds}) and either (\ref{e:r2}) or (\ref{e:r2c0}). Note that $G_{\tilde r} \subset B_{2\tilde r} \setminus B_{\tilde r}$ and $G_{\tilde r}$ is symmetric (i.e. $G_{\tilde r} = -G_{\tilde r}$). Moreover, for all $h \in G_{\tilde r}$ either $w(h) > (A+64) \tilde r^2$ and $K(h) > (2-\alpha) \lambda \tilde r^{-d-\alpha}$ or $w(-h) > (A+64) \tilde r^2$ and $K(-h) > (2-\alpha) \lambda \tilde r^{-d-\alpha}$. Therefore

\begin{align*}
\int_{B_{2\tilde r} \setminus B_{\tilde r} } w(h) K(h) \dd h &\geq \int_{G_{\tilde r} } w(h) K(h)\dd h,\\
&= \frac 12 \int_{G_{\tilde r}} w(h) K(h) + w(-h) K(-h)\dd h,\\
&\geq \frac 12 \int_{G_{\tilde r}} A \lambda (2-\alpha) \tilde r^{-d+2-\alpha} \dd h \\
&\geq A \lambda (2-\alpha) \tilde r^{2-\alpha} \mu \omega_d,
\end{align*}
where $\omega_d$ is a constant depending on dimension only.

We invoke the contradiction assumption for each of the radii $r_j = 2^{-j-1} r_0$ with $j=0,1,2,\dots$.  For each $r_j$, we get the estimates corresponding to $\tilde r_j$, which is either $r_j$ or $c_0 r_j$, depending on the case of the contradiction assumption.  Partitioning $B_{r_0}$ we get
\allowdisplaybreaks
\begin{align*}
\int_{B_{r_0}} w(h) K(h) \dd h &= \sum_{j=0}^\infty \int_{B_{2r_j} \setminus B_{r_j}} w(h) K(h) \dd h, \\
&\geq \frac 12 \sum_{j=0}^\infty \int_{B_{2\tilde r_j} \setminus B_{\tilde r_j}} w(h) K(h) \dd h, \\
&\geq (A+64) \lambda (2-\alpha) \mu \omega_d \sum_{j=0}^\infty \left( \tilde r_j \right)^{2-\alpha}, \\
&\geq (A+64) \lambda (2-\alpha) \mu \omega_d \sum_{j=0}^\infty \left( c_0 2^{-j-1} r_0 \right)^{2-\alpha}, \\
&= C(d) c_0^{2-\al}(A+64) \mu \lambda \frac{2-\alpha}{1-2^{\alpha-2}}.
\end{align*}

We get a contradiction with \eqref{e:r1} if $A$ is large enough. Note that the last factor is bounded away from zero, independently of $\alpha$ as long as $\alpha \in (0,2)$.  Thus the value of $A=A_2$ is independent of $\alpha$, and it is chosen to obtain this contradiction.  This concludes the lemma. 
\end{proof}

\begin{lemma} \label{l:mBr}
Under the same conditions as in Lemma \ref{l:local-measure-estimate}, $|m(B_{c_0 r}(y))| \leq C_3 r^d$. Here $r$ is the same value as in Lemma \ref{l:local-measure-estimate}, $c_0$ is fixed from Lemma \ref{l:SomethingLikeGradientMapRegularity} depends only on other universal constants, and $C_3$ depends on $c_0$, $C_4$ (of Lemma \ref{l:SomethingLikeGradientMapRegularity}) and the constant $A_1$ of Lemma \ref{l:local-measure-estimate}.
\end{lemma}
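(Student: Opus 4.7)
The goal is to estimate the Lebesgue measure of $m(B_{c_0 r}(y))$, where $m$ is the correspondence arising from the inf-convolution \eqref{e:q}, set up in the earlier Lemma \ref{l:SomethingLikeGradientMapRegularity}: to each contact point $y$, the map $m$ records the vertex $x$ of the touching paraboloid, so that formally $m(y) = y + \tfrac{1}{128}\nabla u(y,t)$ at points of differentiability. Geometrically, $|m(B_{c_0 r}(y))|$ measures the spread of the family of touching vertices as the contact point varies in $B_{c_0 r}(y)$. The plan is to combine the abstract Jacobian-type regularity of $m$ (from Lemma \ref{l:SomethingLikeGradientMapRegularity}) with the two-scale quadratic envelope for $u$ around $y$ (from Lemma \ref{l:local-measure-estimate}).

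First, I would apply Lemma \ref{l:local-measure-estimate} at the base point $(x,t)$ under the standing hypotheses $x \in B_{1/4}$, $q(x,t) < 3$, $q_t(x,t) < A_1$. This produces the radius $r \leq r_0$ together with the simultaneous measure bound: sets of density at least $\mu/2$ inside both $B_{2r}\setminus B_r$ and $B_{2 c_0 r}\setminus B_{c_0 r}$ on which $\delta_{\pm h} u(y,t) \leq A_2 r^2$ (resp.\ $A_2 (c_0 r)^2$). This is precisely the quantitative quadratic-growth control that Lemma \ref{l:SomethingLikeGradientMapRegularity} demands in order to bound the size of $m$ on a ball of radius $c_0 r$ centered at $y$. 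The choice of the scaling factor $c_0$ as well as the density parameter $\mu/2$ are fixed exactly to match what Lemma \ref{l:SomethingLikeGradientMapRegularity} requires as input.

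Second, I would feed this input into Lemma \ref{l:SomethingLikeGradientMapRegularity}, which directly outputs a bound of the shape $|m(B_{c_0 r}(y))| \leq C_4 \cdot r^d$. The constant $c_0$ is inherited from that lemma; $A_1$ enters through the selection of the admissible $r$ in Lemma \ref{l:local-measure-estimate}, while $A_2$ enters through its quadratic envelope and can then be absorbed. Collecting everything and redefining $C_3$ accordingly gives the desired inequality.

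The main obstacle is not a new estimate but the bookkeeping of hypotheses: one must verify that the two-ring, positive-density upper bound for $\delta_h u(y,t)$ coming out of Lemma \ref{l:local-measure-estimate} is exactly the object required by Lemma \ref{l:SomethingLikeGradientMapRegularity}. The authors' decision to strengthen Lemma \ref{l:local-measure-estimate} to hold simultaneously at the two scales $r$ and $c_0 r$ (rather than a single scale, as in \cite{caffarelli2009regularity}) is specifically designed so this matching is routine; once the alignment is confirmed, the proof is a two-line sequential application of the two previous lemmas plus constant tracking.
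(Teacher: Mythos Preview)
Your overall plan---apply Lemma~\ref{l:local-measure-estimate} and then Lemma~\ref{l:SomethingLikeGradientMapRegularity}---is the same as the paper's. But you skip the one step that actually constitutes the proof, and you misread the role of the two rings.

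Lemma~\ref{l:SomethingLikeGradientMapRegularity} is stated at \emph{unit scale}: its hypothesis is a density bound on a set $G\subset B_2\setminus B_1$, and its conclusion is the diameter bound $|y_1|<c_0\Rightarrow |x_1-x_0|<C_4$. It does not ``directly output a bound of the shape $|m(B_{c_0 r}(y))|\leq C_4\, r^d$.'' The content of the proof is the rescaling that bridges this gap: after translating so that $y=0$, set $\hat u(z)=r^{-2}u(rz)$ and $\hat q(z)=r^{-2}q(rz)$. The ring estimate at scale $r$ from Lemma~\ref{l:local-measure-estimate} then becomes exactly the unit-scale hypothesis of Lemma~\ref{l:SomethingLikeGradientMapRegularity} for $\hat u$ (with $A=A_2$ and $x_0=x/r$), and its conclusion $|\hat x_1-\hat x_0|<C_4$ unwinds to $m(B_{c_0 r}(y))\subset B_{C_4 r}(x)$, giving $|m(B_{c_0 r}(y))|\leq C_3 r^d$.

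Your reading of the two-ring hypothesis is also off. Lemma~\ref{l:SomethingLikeGradientMapRegularity} needs only \emph{one} ring; after rescaling, it is the ring at scale $r$ that is used here. The companion ring at scale $c_0 r$ in Lemma~\ref{l:local-measure-estimate} is not an input to Lemma~\ref{l:SomethingLikeGradientMapRegularity} at all---it is used separately in the proof of Lemma~\ref{l:growth-lemma} to supply the density lower bound for the sets $E_y$. The point of demanding both scales simultaneously is that the \emph{same} radius $r$ must serve both purposes in the Vitali covering of Lemma~\ref{l:growth-lemma}, not that the geometric lemma requires two rings.
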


In order to prove Lemma \eqref{l:mBr}, we only use the equation through Lemma \ref{l:local-measure-estimate}. Indeed, after fixing a time $t$ and rescaling, it reduces to the following geometric statement about functions.

\begin{lemma}\label{l:SomethingLikeGradientMapRegularity}
Let $u :\R^d \to \R$ be a continuous bounded function such that $\grad u(0)$ exists. Let $q(x) = \min_{y\in \bar B_1} u(y) + 64|x-y|^2$. Assume the following conditions hold true.
\begin{itemize}
\item There is at least one point $x_0 \in \R^d$ for which $q(x_0) = u(0) + 64|x_0|^2 = \min_{y\in \bar B_1} \{ u(y) + 64\abs{x_0-y}^2\}$.
\item If we consider the (symmetric) set
\[ G := \set{ h \in B_{2} \setminus B_1 : \delta_h u(0) \leq A \text{ and } \delta_{-h} u(0) \leq A },\]
then $|G| \geq \frac \mu 2 |B_{2} \setminus B_1|$. (Here, as in Lemma \ref{l:local-measure-estimate}, $\delta_h u(y,t) = u(y+h,t) - u(y,t) - 128(x-y) \cdot h$ )
\end{itemize}

Then there are constants $c_0$ and $C_4$ depending on $A$ and $\mu$ and $d$ so that if for some pair of point $x_1$, $y_1$ we have
\[ q(x_1) = u(y_1) + 64 |x_1-y_1|^2,\]
then $|y_1|<c_0$ implies $|x_1-x_0| < C_4$.
\end{lemma}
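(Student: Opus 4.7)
The plan is to exploit the concavity of the inf-convolution together with the non-degeneracy provided by $G$. Set $Q(x):=q(x)-64|x|^2$; writing $q(x)=64|x|^2+\min_{y\in\bar B_1}(u(y)+64|y|^2-128x\cdot y)$ shows $Q$ is concave (an infimum of affine-in-$x$ functions). The hypothesis $q(x_0)=u(0)+64|x_0|^2$ says $Q$ attains its maximum at $x_0$, with $Q(x_0)=u(0)$. Differentiating $q(x_1)=u(y_1)+64|x_1-y_1|^2$ in $x_1$ yields $\grad Q(x_1)=-128\,y_1$. The $x_0$-optimality gives $u(y)\geq u(0)+128\,x_0\cdot y-64|y|^2$ on $\bar B_1$, which at $y=y_1$ implies
\[ Q(x_1)=u(y_1)-128\,x_1\cdot y_1+64|y_1|^2\geq u(0)-128\,(x_1-x_0)\cdot y_1. \]
The analogous inequality coming from $x_1$-optimality, tested at $y=0$, combined with the previous display, gives the standard monotonicity $(x_1-x_0)\cdot y_1\geq 0$.

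The key step is a quantitative upper bound
\[ Q(x)\leq u(0)+A+256-128\,\tau\,|x-x_0| \qquad\text{for all } x\in\R^d, \]
with $\tau=\tau(\mu,d)>0$. For each $h\in G$ the $G$-hypothesis gives $u(\pm h)\leq u(0)\pm 128\,x_0\cdot h+A$. Substituting $y=\pm h$ into $q(x)\leq u(y)+64|x-y|^2$ produces, after subtracting $64|x|^2$ and using $|h|\leq 2$,
\[ Q(x)\leq u(0)+A+256\mp 128\,(x-x_0)\cdot h, \]
and taking the sign favorable to the right-hand side gives $Q(x)\leq u(0)+A+256-128|(x-x_0)\cdot h|$. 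The assumption $|G|\geq(\mu/2)|B_2\setminus B_1|$ implies that for every unit vector $e$ there exists $h\in G$ with $|h\cdot e|\geq\tau(\mu,d)$, since the slab $\{h\in B_2\setminus B_1:|h\cdot e|<\tau\}$ has measure $O(\tau)$ as $\tau\to 0$ and hence is smaller than $(\mu/2)|B_2\setminus B_1|$ for $\tau$ small enough depending on $\mu,d$. Applying this with $e=(x-x_0)/|x-x_0|$ yields the displayed bound.

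Comparing this upper bound at $x=x_1$ with the lower bound on $Q(x_1)$ above, and using $(x_1-x_0)\cdot y_1\leq|x_1-x_0|\,|y_1|$, we get
\[ 128\,(\tau-|y_1|)\,|x_1-x_0|\leq A+256. \]
Setting $c_0:=\tau/2$ forces $\tau-|y_1|>\tau/2$ whenever $|y_1|<c_0$, and therefore $|x_1-x_0|\leq(A+256)/(64\tau)=:C_4$, both constants depending only on $A$, $\mu$, $d$.

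The hard part is the substitution $y=\pm h$ with $h\in B_2\setminus B_1$: strictly as written, the infimum defining $q$ is over $\bar B_1$, so $h\notin\bar B_1$ and the substitution is not literally legal. The correct reading is the rescaled setting arising in Lemma \ref{l:mBr}: after blowing up around a center $y\in B_{1/2}$ by the factor $1/r$ with $r\leq r_0$ small, the original domain $\bar B_1$ of the inf-convolution becomes a ball of radius $\geq 1/(2r)\geq 2$ which contains $B_2\setminus B_1$, so the substitution is legitimate. The abstract lemma should therefore be read with this implicit rescaling in mind (equivalently, with the inf-convolution taken over a sufficiently large ball containing $B_2$), and the argument above then yields $c_0$ and $C_4$ depending only on $A$, $\mu$, and $d$.
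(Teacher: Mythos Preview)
Your proof is correct and is essentially the dual formulation of the paper's argument. The paper works directly with the two touching paraboloids $p_0(z)=q(x_0)-64|x_0-z|^2$ and $p_1(z)=q(x_1)-64|x_1-z|^2$, notes that $p_1-p_0$ is affine with slope $128(x_1-x_0)$, and derives a lower bound $\delta_h u(0)\geq -C+128(x_1-x_0)\cdot z$ for $h=y_1+z$; it then intersects $G$ with a cone $H$ in the direction $x_1-x_0$ (with vertex shifted to $y_1$) to produce a single $h\in G\cap H$ at which the inequality $\delta_h u(0)\leq A$ forces $|x_1-x_0|$ to be bounded. You instead pass to the concave function $Q=q-64|\cdot|^2$, obtain a global upper bound $Q(x)\leq u(0)+A+256-128\tau|x-x_0|$ by substituting $y=\pm h$ for $h\in G$ together with a slab argument, and compare with the lower bound $Q(x_1)\geq u(0)-128(x_1-x_0)\cdot y_1$. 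The slab argument and the paper's cone argument are the same measure-theoretic fact; your packaging via $Q$ is arguably cleaner because it separates the $G$-input (the upper bound on $Q$, valid for all $x$) from the $y_1$-input (the lower bound at $x_1$), so that the shift by $y_1$ never enters the direction-finding step.

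Both routes share the domain issue you flag in your final paragraph: substituting $y=\pm h$ with $|h|>1$ into the definition of $q$ (equivalently, the paper's claim that $p_0,p_1\leq u$ ``for all $z\in\R^d$'' and its subsequent use at $y_1+z\in B_2\setminus B_1$) is only legitimate once the infimum is taken over a ball of radius at least $2$, which is exactly what the rescaling in Lemma~\ref{l:mBr} supplies. Your identification of this point is correct and applies equally to the paper's own proof as written.
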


\begin{proof}
Assume $|y_1|<c_0$. Let $p_1$ and $p_2$ be the following quadratic polynomials.
\begin{align*}
p_0(z) = q(x_0) - 64|x_0-z|^2, \\
p_1(z) = q(x_1) - 64|x_1-z|^2.
\end{align*}
From the definition of $q$, we have that $p_0(z) \leq u(z)$ and $p_1(z) \leq u(z)$ for all $z \in \R^d$. Moreover, $p_0(y_0) = u(y_0)$ and $p_1(y_1) = u(y_1)$.

Observe that $p_1 - p_0$ is the affine function
\[
p_1(z) - p_0(z) = q(x_1) - q(x_0) + 64(|x_0|^2 - |x_1|^2) + 128(x_1-x_0) \cdot z.
\]
Since $p_1(y_1) = u(y_1) \geq p_0(y_1)$, then
\[
p_1(y_1+z) - p_0(y_1+z) \geq  128(x_1-x_0) \cdot z.
\]
Using that $u(y_1+z) \geq p_1(y_1+z) \geq p_0(y_1+z) + 128(x_1-x_0) \cdot z$, we get that
\[ \begin{aligned}
\delta_{(y_1+z)} u(0) &\geq \delta_{(y_1+z)} p_0(0) + 128(x_1-x_0) \cdot z \\ &\geq -64 + 128(x_1-x_0) \cdot z \qquad \text{ for } z \in B_1.
\end{aligned}
\]

Let us consider the following set, which is the intersection of a cone (whose vertex is at $y_1$, recall $|y_1|<c_0$) and the ring $B_2 \setminus B_1$.
\[ H = \set{ h \in B_2  \setminus B_1 : h = y_1+z \text{ with } z \cdot (x_1-x_0) > c_0 |z| |x_1-x_0| }.\]
Observe that as $c_0 \to 0$, then $H$ approximates the intersection of the ring $B_2 \setminus B_1$ with the half space $\{z:z \cdot (x_1-x_0) >0\}$. More precisely
\[ |B_2 \setminus B_1 \setminus H \setminus -H| \leq C c_0,\]
for some constant $C$ depending on dimension only.

Let us choose $c_0$ so that $C c_0 < \frac \mu 2 |B_2 \setminus B_1|$. Then $H \cap G$ must have a positive measure (also $G \cap -H$, recall that $G$ is symmetric), and so there exists some $h \in H \cap G$. Then
\begin{align*}
A \geq \delta_h u(0) &\geq -64 + 128(x_1-x_0) \cdot z \\
&> -64 + 128 c_0 |x_1-x_0| |z| \\
&\geq  -64 + 64 c_0 |x_1-x_0|. 
\end{align*}
Therefore $|x_1-x_0| < (A/64+1)/c_0 = : C_4$.
\end{proof}

We simply sketch the main idea to show how Lemma \ref{l:mBr} follows from Lemma \ref{l:SomethingLikeGradientMapRegularity}.
\begin{proof}[Sketch of the proof of Lemma \ref{l:mBr}]
	Assume that $u$ and $q$ are as given in the statements of Lemmas \ref{l:local-measure-estimate} and \ref{l:mBr}.  After a translation, we can assume that $y=0$.  We would then define the rescaled functions
	\[
	\hat u(z) = r^{-2} u(rz)\ \text{and}\ \hat q(z) = r^{-2} q(rz)\ \text{for}\ z\in B_2.
	\]
We note the definition of $\hat q$ will be through a minimum over $B_{1/r}$, but in fact restricting the minimum to $B_1$ changes nothing since $y=0$ is such a point which gives the minimum for $\hat x = x/r$.  Then Lemma \ref{l:SomethingLikeGradientMapRegularity} is applicable with the functions $\hat u$ and $\hat q$, with the point $x_0=\hat x = x/r$, and the set $\hat G = r^{-1} G$ with $G$ being the set arising from the outcome of Lemma \ref{l:local-measure-estimate}.
\end{proof}

In an ABP-based proof, this lemma corresponds to estimating the image of the gradient map of the convex envelop of $u$ in $B_{r}$. This would be the purpose of Lemma 8.4 in \cite{caffarelli2009regularity} or Lemma 3.6 in \cite{BjCaFi-2012NonlocalGradDepOps}. In those cases we would need to adjust $u$ by a supporting hyperplane and argue using a convex envelop.  In our approach, we work without invoking a convex envelop. 

Note that after Corollary \ref{c:half-Lipschitz} and Lemma \ref{l:qt-ae}, where we obtain that $|q_t|$ is bounded in a set of positive measure, the rest of the proof of Lemma \ref{l:growth-lemma} should be interpreted as a nonlocal version of the method in \cite{Savin-2007SmallPerturbationCPDE}. It is more flexible, and arguably more natural, than an ABP-based proof.

We are now in a position to prove Lemma \ref{l:growth-lemma}. 
\begin{proof}[Proof of Lemma \ref{l:growth-lemma}]
We assume $u(0,0)=1$. The result follows for the assumption $\min_{Q_{1/2}} u = 1$ by a simple translation argument.

Let $G$ be the set of points $(x,t) \in B_{1/8} \times (-\tau,0]$ so that $q_t \leq A_1$. From Lemma \ref{l:qt-ae}, we have a universal lower bound on its measure: $|G| > \del_1$.  For each point $(x,t) \in G$, there is at least one point $y \in B_1$ which realizes the minimum value for $q(x,t)$ in \eqref{e:q}.  For each fixed value of $t$, we define the map $m: y \mapsto x$. This is a well defined as function if $u \in C^1$. In general the function nature of $m$ is not necessary, and we should think of $m$ as a set mapping which sends values of $y$ into a set of possible values of $x$ (like the sub-differential of a convex function).

We note that if $y\in m^{-1}(G)$, we have $q_t(x,t) \leq A_1$ for some $x\in G$, and we can apply Lemma \ref{l:local-measure-estimate}, which was presented above.  This gives a ball around $y$ and a collection of points where $u$ does not grow too much, for example we can control the set 

\begin{equation}\label{e:Lem4.1LocalGoodSetAtyDef}
	E_y := \{z \in B_{c_0r}(y) : u(z,t) < A_2 + 43\}.
\end{equation}
This is possible by starting with the ring from Lemma \ref{l:local-measure-estimate} and then noting that $r\leq 1$, $u(y)<3$ (since $q(x)\leq 3$, see first line of the proof of Lemma \ref{l:local-measure-estimate}), $\del_{\pm h} u(y)\leq A_2 r^2$, $\abs{h}\leq 1/2$, $\abs{x-y}\leq 5/8$, and $128 \abs{x-y}\abs{h}\leq 40$.
Thus from Lemma \ref{l:local-measure-estimate} we see that
\begin{equation}\label{e:Lem4.1LocalGoodSetAtySize}
	\abs{E_y} = |\{z \in B_{c_0r}(y) : u(z,t) < A_2 + 43 \}| > \delta |B_{c_0r}|.
\end{equation}
Here $\del$ is a constant which depends on dimension and the $\mu$ from Lemma \ref{l:local-measure-estimate}.
We note that we use the $r^2$ growth of $\del_h u$ from Lemma \ref{l:local-measure-estimate} in a very rough fashion at this step.   The importance of the $r^2$ comes later, in relationship to an upper bound on $\abs{m(B_r)}$.  We also note that we have used the ball $B_{c_0r}$ instead of $B_r$.  At this stage, both balls have the same estimate regarding the growth of $u$ on a universal proportion of the set.  However, only $B_{c_0r}$ also has the necessary estimate for the size of $m(B_{c_0r})$.  This choice will be further illuminated below.

We need to estimate a set where $u$ is not too large, and given the choice of $E_y$ above, we see that a good candidate is

\begin{equation*}
	NL := \Union_{y\in m^{-1}(G)} E_y.
\end{equation*}
Thanks to (\ref{e:Lem4.1LocalGoodSetAtyDef}) and (\ref{e:Lem4.1LocalGoodSetAtySize}) the measure of $NL$ can be equivalently estimated via the size of
\begin{equation*}
	NLB := \Union_{y\in m^{-1}(G)} B_{c_0r(y)}(y),
\end{equation*}
where $B_{c_0r(y)}(y)$ is the good ball given in Lemma \ref{l:local-measure-estimate}.
Therefore, the only question is whether or not the set, $NLB$, has a measure which is comparable to $B_1$.

If $\{B_j\}$ is a Vitali sub-covering of the collection $\{B_{r(y)}(y)\}_{y\in m^{-1}(G)}$, then we have 
\begin{equation*}
	\Union_j 5 B_j \supset m^{-1}(G), 
\end{equation*}
and hence
\begin{equation*}
	m(\Union_j 5 B_j) \supset m(m^{-1}(G)).
\end{equation*} 
Also by subadditivity, we have that
\begin{equation*}
	\abs{m(\Union_j B_j)} \leq \sum_j \abs{m(B_j)}.
\end{equation*}
In order to conclude, it would suffice to know (and it is true by Lemma \ref{l:mBr}) that 
\begin{equation}\label{e:Lem4.1SizeOfmBr}
	\abs{m(B_j)} \leq C_3\abs{B_j},
\end{equation}
which allows us to compare $\abs{NLB}$ back to $\abs{G}$.  Indeed, the choice to use $B_{c_0r(y)}(y)$ was motivated entirely by this requirement, and Lemma \ref{l:mBr} gives (\ref{e:Lem4.1SizeOfmBr}) via the result of Lemma \ref{l:local-measure-estimate} and the choice of $c_0r(y)$.

We will use the fact that $m$ maps onto $G$ as well as that by construction of the subcover $\{B_j\}$, $m^{-1}(G)$ is contained in its union.  Thus we see that 
\begin{align*}
	G = m(m^{-1}(G)) \subset m(\Union_j B_j) = \Union_j m(B_j),
\end{align*}
and hence by the choice of $c_0r(y)$ and definition of $E_y$ with Lemmas \ref{l:local-measure-estimate} and \ref{l:mBr}, it holds 
\begin{align*}
	\abs{G} \leq \abs{\Union_j m(B_j)}\leq \sum_j \abs{m(B_j)}\leq \sum_j C_3\abs{B_j}\leq  \sum_j \frac{C_3}{\del}\abs{E_{y_j}}.
\end{align*}
Since the $B_j$ were chosen to be disjoint, then also are the corresponding $E_{y_j}$, and so we can conclude
\begin{align*}
	\abs{NL}\geq \abs{\Union_j E_{y_j}} = \sum_j \abs{E_{y_j}}\geq \frac{\del}{C_3}\abs{G}\geq \frac{\del\del_1}{C_3}.
\end{align*}
This gives the result of Lemma \ref{l:growth-lemma}.
\end{proof}

\section{A special barrier function}
\label{sec:Barrier}

This section is concerned with the construction of a barrier function which is essential for all of the results regarding regularity of parabolic (and elliptic) equations in non-divergence form.  In principle, one would expect our construction to be similar to the one presented in \cite[Lemma 4.2]{lara2014regularity}, but this is not actually the case.  We deviate in some significant respects due to the additional generality allowed by assumptions (A2) and (A3).  In this regard our construction is more accurately described as a parabolic version of the barrier from \cite[Section 5]{rang2013h}, where similar lower bounds on only small sets were allowed.  Significant detail is required to carry over the ideas from \cite[Section 5]{rang2013h} to the parabolic setting.  These additional difficulties involved in the construction of the barrier are in fact also related to the conditions under which the Harnack inequality fails for equations such as (\ref{e:main}).

Because of the relative strength of the terms $\abs{\grad p}$ and $M^-p$ under rescaling, it is necessary to break the construction of the special barrier function into two cases: one with $\al\geq1$ and the other with $\al<1$.  For the second case, we must remove the gradient term from the equation.

\subsection{The main lemmas and the barrier}\label{sec:BarrierMainPart}

\begin{lemma} \label{l:barrierAlBigger1}
Let $\al\in[1,2)$ and suppose $r \in (0,1)$ is given. There exists $\eps_0 > 0$, $q_0>0$ and a function $p : \R^d \times (0, \infty) \to \R$ such that for all $\alpha \geq 1$, 
\begin{align}
 p_t + C_0 |\grad p| - M^- p \leq 0 &\text{ in } \Big(B_1 \times (0,\infty)\Big) \setminus \Big(B_r \times (0,r^\al]\Big),  \label{e:barrier1}\\
p \leq 1 &\text{ in } B_r \times (0,r^\al], \label{e:barrier2}\\
p \leq 0 &\text{ in } \Big( \R^d \setminus B_1 \Big) \times (0,\infty) \text{ and } \Big(\R^d\setminus B_r\Big) \times \{0\}, \label{e:barrier3}\\
p \geq \eps_0 r^{q_0} e^{-C_5 (T-r^\alpha)} &\text{ in } B_{3/4} \times [r^\alpha,T], \label{e:barrier4}
\end{align}
The constant $\eps_0$ and $q_0$ depend only on $\lambda$, $\Lambda$, $\mu$, $C_0$, $\alpha_0$ and dimension.
\end{lemma}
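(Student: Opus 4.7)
The plan is to construct $p$ in the separable form $p(x,t) = e^{-C_5 t}\phi(x)$, where $\phi$ is a carefully chosen radially symmetric spatial profile designed to satisfy a pointwise subsolution inequality on $B_1 \setminus \overline{B_r}$. A natural ansatz, inspired by the classical Caffarelli--Silvestre barrier and its adaptation in \cite{rang2013h}, is the truncated inverse power
\[
\phi(x) = c_1 \max\bigl(0,\, (r/|x|)^{q_0} - r^{q_0}\bigr),
\]
capped at $c_1(1-r^{q_0})$ inside $B_r$, with $c_1 \leq 1$ universal and $q_0$ chosen \emph{large} (to be fixed below). This $\phi$ is continuous, vanishes on $\{|x|\geq 1\}$ with a downward ``angle'' at $\partial B_1$ (hence automatically a viscosity subsolution there, as remarked after Proposition \ref{p:comparisonpple}), satisfies $\phi \leq c_1 \leq 1$ globally, and on $B_{3/4}$ obeys $\phi(x) \geq c_1\bigl((4/3)^{q_0}-1\bigr) r^{q_0} =: \eps_0 r^{q_0}$.

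The main analytical task is to establish the classical inequality $-M^-\phi(x) + C_0|\grad \phi(x)| \leq K_0 \phi(x)$ on $B_1 \setminus \overline{B_r}$, for a universal $K_0$. The gradient bound $|\grad \phi(x)| \lesssim q_0 (r/|x|)^{q_0}/|x|$ is explicit and, since $\al \geq 1$, can be absorbed into the $M^-$ estimate. For $M^-\phi$ we pick an arbitrary $K \in \mathcal K$ and split $\int \delta_h\phi(x) K(h)\dd h$ into three regimes: short range $|h| \leq |x|/2$ (controlled by the $C^{1,1}$ smoothness of the power profile and \eqref{e:integral-square-Br}), long range $|h|\geq 1$ (controlled by the boundedness of $\phi$, \eqref{e:integral-value-tail}, and \eqref{e:assumption-odd} for the odd part), and intermediate range. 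The crucial positive contribution comes from the intermediate range: because $A_\rho = -A_\rho$, pairing $h$ with $-h$ on the annular set of (A3) gives
\[
\tfrac12 \int_{A_\rho}\bigl(\delta_h\phi(x)+\delta_{-h}\phi(x)\bigr) K(h)\dd h \geq \tfrac12 (2-\al)\lam\rho^{-d-\al} \int_{A_\rho}\bigl(\phi(x+h)+\phi(x-h)-2\phi(x)\bigr)\dd h,
\]
and the classical Pucci-type computation for $|x|^{-q_0}$ shows the right-hand integrand is bounded below by a positive multiple of $\rho^2 \phi(x) |x|^{-2}$ provided $q_0$ exceeds a threshold depending only on $\lam$, $\Lam$, $\mu$, $\al_0$, $d$. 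Summing over dyadic rings $\rho \sim |x|$ and balancing the good contribution against the other two regimes pins down $q_0$ as a universal constant and yields the target estimate.

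With this in hand, the computation
\[
p_t + C_0|\grad p| - M^-p = e^{-C_5 t}\bigl(-C_5 \phi + C_0|\grad \phi| - M^-\phi\bigr) \leq e^{-C_5 t}(-C_5 + K_0)\phi
\]
shows that choosing $C_5 > K_0$ gives \eqref{e:barrier1} wherever $\phi > 0$; the angle at $\partial B_1$ supplies the viscosity inequality on $|x|=1$, and the inequality is not required inside $B_r \times (0,r^\al]$. Properties \eqref{e:barrier2}, \eqref{e:barrier3}, \eqref{e:barrier4} are immediate from the explicit form of $\phi$. The main obstacle is the intermediate-scale lower bound on $M^-\phi$: since only the measure-theoretic (A3) is available, one cannot simply exploit a pointwise lower bound on $K$; the positivity must instead be extracted by the symmetric pairing $h \leftrightarrow -h$ on $A_\rho$ together with the radial convexity of $|x|^{-q_0}$, which forces the precise choice of $q_0$ as a function of $\lam, \Lam, \mu, \al_0, d$. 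A further delicacy is making the estimates uniform as $\al \to 2$ (which requires using the $(2-\al)$ weights in (A2) and (A3) in a balanced way) and ensuring that $K_0$ (hence $C_5$) is independent of $r \in (0,1)$, which may require invoking the scale invariance of $\mathcal L$ from Lemma \ref{l:M+Properties}(iii) to reduce to a fixed reference radius.
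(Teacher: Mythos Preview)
Your separable ansatz $p(x,t)=e^{-C_5 t}\phi(x)$ fails the initial condition \eqref{e:barrier3}. That condition requires $p\leq 0$ on $(\R^d\setminus B_r)\times\{0\}$, but your profile $\phi(x)=c_1\max\bigl(0,(r/|x|)^{q_0}-r^{q_0}\bigr)$ is strictly positive on the whole annulus $B_1\setminus \overline{B_r}$, and $p(x,0)=\phi(x)$. No choice of $C_5$ or $q_0$ repairs this: a product $g(t)\phi(x)$ with $g(0)>0$ will always violate \eqref{e:barrier3}, and taking $g(0)=0$ kills \eqref{e:barrier4}. The barrier must have spatial support that \emph{grows} from (a subset of) $B_r$ at $t=0$ to all of $B_1$ by $t=r^\alpha$; a purely separable function cannot do that.

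This is exactly why the paper's construction is not separable on $(0,r^\alpha]$. There one takes $p(x,t)=t^{-q_0}\Phi(rx/t^{1/\alpha})$ with $\Phi$ a fixed (i.e.\ $r$-independent) profile supported in $B_1$; the support of $p(\cdot,t)$ is then $\{|x|<t^{1/\alpha}/r\}$, which shrinks to the origin as $t\downarrow 0$ and reaches $B_1$ precisely at $t=r^\alpha$. Only for $t\geq r^\alpha$ does the paper switch to the exponential-in-time form you propose. The self-similar piece also makes the lower bound \eqref{e:barrier4} come out with the correct power $r^{q_0}$ automatically, and it avoids a second difficulty lurking in your scheme: because your $\phi$ depends on $r$ (it carries the factor $r^{q_0}$ and is capped inside $B_r$), getting a constant $K_0$ in $-M^-\phi+C_0|\grad\phi|\leq K_0\phi$ that is uniform in $r$ is not as immediate as you suggest, particularly in the inner region $|x|\sim r$ where the cap interacts with the nonlocal operator. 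The paper sidesteps this by working with a fixed $\Phi$ and pushing the $r$-dependence into the scaling variable $z=rx/t^{1/\alpha}$; your idea of invoking Lemma~\ref{l:M+Properties}(iii) is in the right spirit, but it leads back to the self-similar ansatz rather than the separable one.
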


\begin{lemma} \label{l:barrierAlSmaller1}
Let $\al\in[\al_0,2)$ and suppose $r \in (0,1)$ is given. Then the same statement of Lemma \ref{l:barrierAlBigger1} remains true except (\ref{e:barrier1}) is replaced by
\begin{align}
 p_t - M^- p \leq 0 &\text{ in } \Big(B_1 \times (0,\infty)\Big) \setminus \Big(B_r \times (0,r^\al]\Big),  \label{e:barrier1SmallAl}
\end{align}
\end{lemma}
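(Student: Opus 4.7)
The plan is to construct the barrier in essentially the same way as in Lemma \ref{l:barrierAlBigger1}, observing that the statement is actually weaker in two independent ways when $\alpha < 1$: the inequality \eqref{e:barrier1SmallAl} drops the gradient term, and the difference operator $\delta_h p(x) = p(x+h) - p(x)$ contains no first-order correction. Consequently the entire construction of Lemma \ref{l:barrierAlBigger1} should carry over with the added flexibility that certain cancellations involving $|\grad p|$ which previously had to be absorbed into the kernel bounds are no longer needed. In particular, for $\alpha \geq 1$ there is nothing to do since Lemma \ref{l:barrierAlBigger1} already produces a $p$ satisfying a stronger inequality than \eqref{e:barrier1SmallAl}. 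The remaining work is for $\alpha \in [\alpha_0,1)$.

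Concretely, I would take the same template $p(x,t) = h(t)\Phi(x) + \eps \psi(x,t)$, where $\Phi$ is a radial profile with $\Phi \leq 1$ on $B_r$, $\Phi \leq 0$ on $\R^d\setminus B_1$, and $\Phi \geq \eps_0 r^{q_0}$ on $B_{3/4}$; $h(t) = e^{-C_5(T-t)}$ produces the exponential decay in \eqref{e:barrier4}; and $\psi$ is a small correction supported near $B_r$ at time $0$, dissipating by time $r^\alpha$, which realizes \eqref{e:barrier3} on $(\R^d\setminus B_r)\times\{0\}$. The profile $\Phi$ is built as in Lemma \ref{l:barrierAlBigger1}, typically of the form $\max(0, c_1 - c_2 |x|^{-\gamma})$ capped and smoothed, with $\gamma$ chosen large enough to make $M^-\Phi \geq 0$ at points of the intermediate annulus where $\Phi > 0$.

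The subsolution inequality \eqref{e:barrier1SmallAl} would then be verified by splitting the domain. At points where $\Phi > 0$, the positivity of $M^-\Phi$, obtained by exploiting assumption (A3) to put positive mass of $K$ on a symmetric set of positive density in each dyadic annulus, combined with the fact that $h(t)$ is monotone increasing in $t$, gives $p_t \leq 0 \leq M^- p$ directly. At points where $\Phi \leq 0$, Corollary \ref{c:classical-estimates-pucci} bounds $|M^-\Phi|$ in terms of $\|\Phi\|_{L^\infty}$ alone (no gradient term since $\alpha < 1$); taking $C_5$ larger than this bound produces $p_t - M^- p \leq 0$. The correction $\psi$ is estimated separately, using that it is localized and its contribution is small outside of $B_r\times(0,r^\alpha]$.

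The main obstacle will be verifying \eqref{e:barrier1SmallAl} at the transition locus $\{\Phi = 0\}$, where $p$ is only Lipschitz and not classically $C^2$; this requires interpreting \eqref{e:barrier1SmallAl} in the viscosity sense and a touching-from-above argument as in \cite[Section 5]{rang2013h}, then extending it to the parabolic setting here. A secondary concern is to check that the constants $\eps_0$, $q_0$, and $C_5$ can be chosen uniformly for $\alpha \in [\alpha_0, 1)$, which reduces to tracking the explicit $\alpha$-dependence in Lemma \ref{l:classical-bounds} and verifying that the constant $C(\alpha)$ in \eqref{e:integrability-of-K} stays controlled as $\alpha\to \alpha_0 > 0$ and as $\alpha \to 1^-$ (where the assumption (A4) still controls the odd part of $K$).
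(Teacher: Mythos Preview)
Your opening paragraph is exactly right, and in fact is the entire content of the paper's proof: the paper simply says to re-run the construction of Lemma~\ref{l:barrierAlBigger1} with the term $C_0|\nabla p|$ removed throughout, noting that this was the only term carrying the factor $t^{1-1/\alpha}$ (which is unbounded for $\alpha<1$). Everything else in that construction is insensitive to whether $\alpha$ is above or below $1$.

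The problem is that what you then describe ``concretely'' is \emph{not} the construction of Lemma~\ref{l:barrierAlBigger1}. The paper does not use a template of the form $p(x,t)=h(t)\Phi(x)+\eps\psi(x,t)$. Instead it takes a self-similar rescaling
\[
p(x,t)=t^{-q_0}\,\Phi\!\left(\frac{rx}{t^{1/\alpha}}\right)\quad\text{for }t\in(0,r^\alpha],
\qquad
p(x,t)=e^{-C_5(t-r^\alpha)}r^{-\alpha q_0}\Phi(x)\quad\text{for }t>r^\alpha,
\]
with $\Phi(z)=\max\{b_{\gamma_1,q_1}(z)-1,0\}$ built from the special stationary subsolution $b_{\gamma_1,q_1}$ of Lemma~\ref{SpecialLem:TheGoodB}, followed by a truncation from above to enforce \eqref{e:barrier2}. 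The self-similar scaling, not a separate correction $\psi$, is what makes $p$ vanish on $(\R^d\setminus B_r)\times\{0\}$ and grow to fill $B_1$ by time $r^\alpha$; and for $t>r^\alpha$ the factor $e^{-C_5(t-r^\alpha)}$ is \emph{decreasing}, not increasing.

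Your concrete proposal also has an internal inconsistency: with $h(t)=e^{-C_5(T-t)}$ increasing and $\Phi>0$, one has $p_t=h'(t)\Phi(x)>0$, so the claim ``$p_t\le 0\le M^-p$'' cannot hold. And the profile $\max(0,c_1-c_2|x|^{-\gamma})$ is the wrong way around: it vanishes near the origin and is positive at infinity, opposite to what \eqref{e:barrier2}--\eqref{e:barrier3} require. So while your first paragraph captures the paper's argument completely, the subsequent construction you sketch is a different (and, as written, flawed) approach; you should instead simply point back to the actual computations \eqref{e:PhiGoal} and \eqref{e:BarrierTLargeComputation} in the proof of Lemma~\ref{l:barrierAlBigger1} and observe that deleting the $C_0|\nabla\Phi|$ terms leaves all the remaining inequalities intact.
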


\begin{remark}
	We note that the same constants $\eps_0$ and $q_0$ can be chosen to work for both Lemmas \ref{l:barrierAlBigger1} and \ref{l:barrierAlSmaller1}.
\end{remark}

\begin{remark}
	 The existence of the barrier is closely related to uniform estimates on hitting times of a Markov process which are crucial to the proofs of weak Harnack inequality and H\"older regularity in the probabilistic framework.  These hitting time estimates appear in the original work of Krylov-Safonov in \cite{krylov1980certain}, \cite{krylov1979estimate}, and they have become a standard technique in the Probability literature (see the presentation in e.g. the lecture notes \cite{Bass2004-PDEfromProbLectNotes}).  In other
contexts, there exists an explicit barrier and this lemma looks deceivingly simple. For nonlocal equations whose kernels are allowed to vanish, this step is in fact highly non trivial. Lemmas \ref{l:barrierAlBigger1} and \ref{l:barrierAlSmaller1} have a probabilistic interpretation as the lower bound for the probability of the process to hit a ball between time 0 and $r^\alpha$.
\end{remark}

The strategy for this construction is to start with a yet to be determined function, $\Phi$ supported in $B_1$, and rescale $\Phi$ on the time interval $t\in(0,r^\al)$ as 

\begin{equation}\label{e:BarrierPdefViaPhi}
p(x,t) = t^{-q_0}\Phi(\frac{rx}{ t^{1/\al}}),
\end{equation}
and then to use for $t\in(r^\al,\infty)$
\begin{equation}\label{e:BarrierPdefForLargerT}
	p(x,t) = e^{-C_5(t-r^\al)}p(x,r^\al) = e^{-C_5(t-r^\al)} r^{-\al q_0}\Phi(x).
\end{equation}
The choice of $rx/t^{1/\al}$ is to make sure that $p$ will be positive for all $\abs{x}<1$ when $t\geq r^\al$. The constants $q_0$ and $C_5$ are there to force the subsolution property in the regions where $M^-p$ cannot be made to be as large as we like.  We now make some initial computations to illuminate our subsequent choices (note the use of Lemma \ref{l:M+Properties}): 

\begin{align}
	p_t &= -q_0t^{-q_0-1}\Phi(\frac{rx}{t^{1/\al}}) - \frac{1}{\al}t^{-q_0-1/\al-1}\grad\Phi(\frac{rx}{t^{1/\al}})\cdot rx\\
	\grad p &= rt^{-q_0-1/\al}\grad\Phi(\frac{rx}{t^{1/\al}})\\
	M^-p &= t^{-q_0-1} r^\al M^-\Phi(\frac{rx}{t^{1/\al}})
\end{align}
We want to satisfy (\ref{e:barrier1}), which then can be transformed to the new goal (at least for $t\in(0,r^\al)$)
\begin{equation}
	t^{-q_0-1}\left( -q_0 \Phi(\frac{rx}{t^{1/\al}}) - \frac{1}{\al}t^{-1/\al}\grad\Phi(\frac{rx}{t^{1/\al}})\cdot rx + rt^{1-1/\al} C_0\abs{\grad\Phi(\frac{rx}{t^{1/\al}})} 
	-r^\al M^-\Phi(\frac{rx}{t^{1/\al}})  
	\right)
	 \leq 0 .
\end{equation}
Switching out variables 
\[
z=\frac{rx}{t^{1/\al}},
\]
we want for an appropriate set of $z$
\begin{equation}\label{e:PhiGoal}
	t^{-q_0-1}\left( -q_0 \Phi(z) - \frac{1}{\al}\grad\Phi(z)\cdot z + r t^{1-1/\al} C_0\abs{\grad\Phi(z)} 
	-r^\al M^-\Phi(z)  
	\right)
	 \leq 0 .
\end{equation}

We can now turn to the requirement for $p$ to satisfy \eqref{e:barrier1} when $t\geq r^\al$.  The computations are similar to the case of $t\in[0,r^\al]$.  Using (\ref{e:BarrierPdefViaPhi}),

\begin{align*}
	p_t &= -C_5 e^{-C_5(t-r^\al)} r^{-\al q_0} \Phi(x)\\
	\grad p &= r^{-\al q_0} e^{-C_5(t-r^\al)} \grad \Phi(x)\\
	M^-p &= r^{-\al q_0} e^{-C_5(t-r^\al)} M^-\Phi(x).
\end{align*}
Then the goal (\ref{e:PhiGoal}) becomes
\begin{equation}\label{e:BarrierTLargeComputation}
	e^{-C_5(t-r^\al)}r^{-\al q_0}\left( -C_5\Phi(x) + C_0\abs{\grad\Phi(x)} -  M^-\Phi(x)    \right)  \leq 0.
\end{equation}

The function $\Phi$ and subsequently $p$ will  be built in a many-staged process.  One of the key components is a special bump function which acts a a barrier in the stationary setting.  This construction proceeds similarly to that of \cite{rang2013h}, and we would like to point out that there, just as here, there are significant challenges for this construction due to the generality of the lower bound assumption in (\ref{e:assumption-below}) (cf. the bump function in \cite{caffarelli2009regularity} where the lower bound on $K$ holds globally).    We start with a two parameter family of auxiliary functions

\begin{equation*}
	b_{\gam,q}(y) = \hat b(\abs{y})
\end{equation*}
and
\begin{equation}\label{SpecialEq:fHatDef}
	\hat b(r)=
	\begin{cases}
		r^{-q}\ &\text{if}\ r\geq 1-\frac{c_1}{2}\\
		m_{\gam,q}(r)\ &\text{if}\ 1-c_1\leq r \leq 1-\frac{c_1}{2}\\
		\gam^{-q}\ &\text{if}\ r\leq 1-c_1,
	\end{cases}
\end{equation}
with $m_{\gam,q}$ smooth and monotonically decreasing (so there will be a restriction between $\gam$ and $c_1$ both being small enough), and without loss of generality $m_{\gam,q}$ will be such that    
\[
b_{\gam,q}(y) \geq \min\{\gam^{-q},\abs{y}^{-q}\}\ \ \text{for all}\ y\in\R^d.
\]

\begin{center}
\setlength{\unitlength}{1in} 
\begin{picture}(4.86111,2)
\put(0,0){\includegraphics[height=2in]{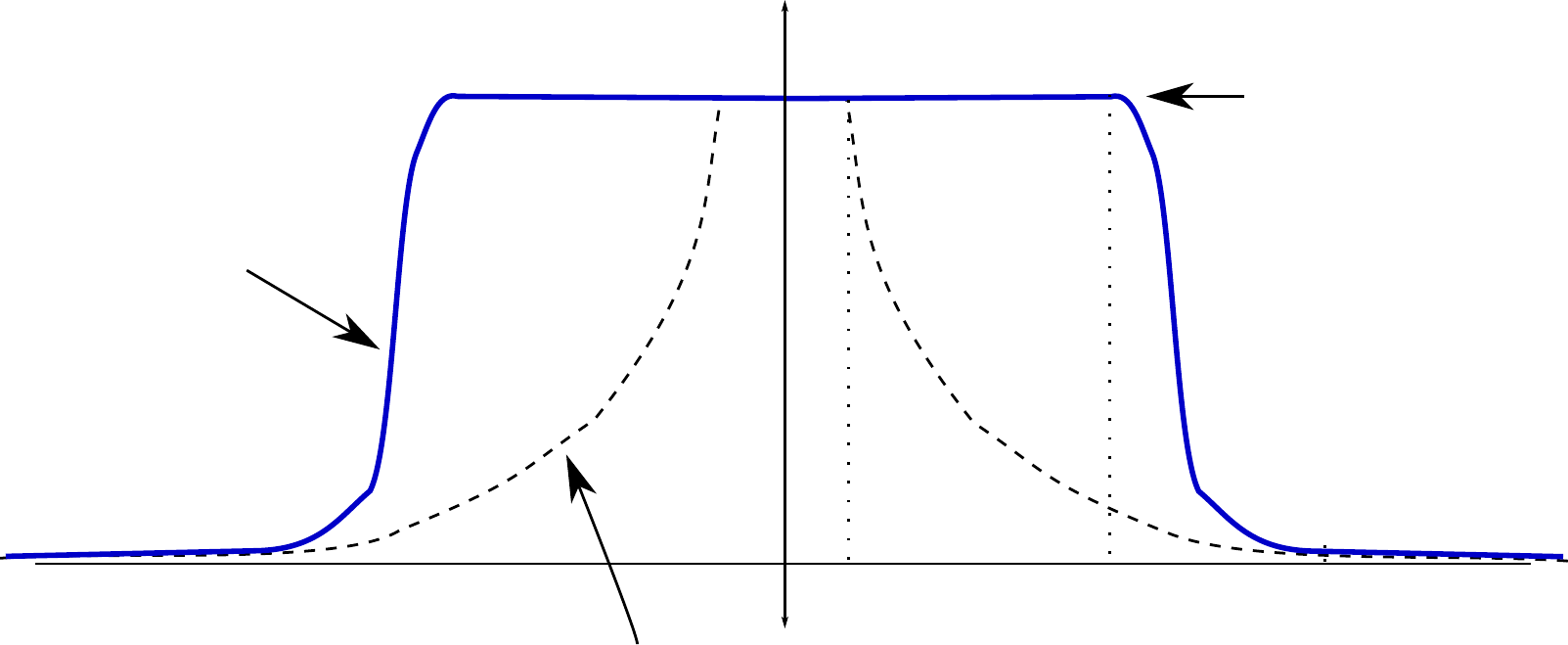}}
\put(0.507,1.22){$b_{\gamma,q}$}
\put(1.99,-0.05){$|y|^{-q}$}
\put(3.88,1.68){$\gamma^{-q}$}
\put(2.58,0.111){$\gamma$}
\put(3.26,0.1){$1-c_1$}
\put(3.94,0.1){$1-c_1/2$}
\end{picture}
\end{center}

The key part of the construction is that there are choices of $\gam$ and $q$ which make $b$ a subsolution in a given small strip (and a subsequent truncation allows the equation to hold in a large set).  We state this result for the choices of $\gam$ and $q$, and then we will prove it in Section \ref{sec:BarrierTechnicalDetails}
\begin{lemma}\label{SpecialLem:TheGoodB}
	Let $C>0$ be given.  Then there exist a small constant $c_1$ and choices of $\gam_1$ and $q_1$ (depending on $C$ plus all other universal objects) such that
	\begin{equation}\label{e:TheGoodB}
	M^-b_{\gam_1,q_1}(x) \geq Cq_1 \abs{x}^{-q_1-\al} \ \ \ \text{for all}\ \ 1-c_1/2 \leq \abs{x}\leq 1,
\end{equation}
	for all $\al\in(\al_0,2)$.  $c_1$ depends on the lower bound of $K$ in (\ref{e:assumption-below}).
\end{lemma}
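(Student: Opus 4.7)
The plan is to estimate $Lb(x) := \int \delta_h b(x) K(h)\,dh$ from below uniformly over all $K \in \mathcal K$ and all $x$ with $1-c_1/2 \leq |x| \leq 1$. Since $b=b_{\gamma,q}$ agrees with the smooth radial power $|y|^{-q}$ on $|y| \geq 1-c_1/2$, in a neighborhood of $x$ its derivatives are computable: $\nabla b(x) = -q|x|^{-q-2} x$, and $D^2 b$ has radial eigenvalue $q(q+1)|x|^{-q-2}$ and tangential eigenvalue $-q|x|^{-q-2}$. I would decompose $Lb$ into near, middle, and far pieces according to $|h|$, then extract the main positive contribution from the middle piece using (A3) while controlling the other two.

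For the near piece $\int_{|h|\leq\rho}$ I would use the local Taylor expansion of $\delta_h b$ together with \eqref{e:integral-square-Br} and \eqref{e:assumption-odd} from Lemma \ref{l:KernelIntegrationBounds} (splitting cases $\alpha>1$, $\alpha=1$, $\alpha<1$) to obtain a lower bound of the form $-Cq^2|x|^{-q-2}\Lambda\rho^{2-\alpha}$, where the negative sign allows for adversarial tangential concentration of $K$. For the tail $\int_{|h|>R}$, I would use $b \geq 0$ everywhere together with \eqref{e:integral-value-tail} and \eqref{e:integral-linear-tail} to bound below by a negative expression involving $\Lambda(|x|^{-q} + q|x|^{-q-1})$; for $c_1 q$ bounded, this is of size $-C(q+1)$.

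The main step is the middle piece. Choose a universal scale $r_0 \sim |x|/2$ and apply (A3) in the equivalent form \eqref{e:assumption-below}: there is a symmetric set $A_{r_0}\subset B_{2r_0}\setminus B_{r_0}$ with $|A_{r_0}|\geq\mu|B_{2r_0}\setminus B_{r_0}|$ and $K(\pm h)\geq M_0 := (2-\alpha)\lambda r_0^{-d-\alpha}$ on $A_{r_0}$. Splitting $K$ on $A_{r_0}$ as the symmetric floor $M_0$ plus a nonnegative remainder, the floor contributes
\[
M_0 \int_{A_{r_0}}\delta_h b(x)\,dh \;=\; \tfrac{M_0}{2} \int_{A_{r_0}} \bigl(b(x+h)+b(x-h)-2b(x)\bigr)\,dh,
\]
by symmetry of $A_{r_0}$ (the gradient and indicator terms cancel regardless of the regime of $\alpha$). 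To show the right-hand side is $\gtrsim \gamma^{-q}$, I would single out the symmetric ``good set'' $G := \{h: x+h\in B_{1-c_1}\}\cup\{h: x-h\in B_{1-c_1}\}$, on which at least one of $b(x\pm h)$ equals the plateau value $\gamma^{-q}$. For $\gamma$ small enough, $\gamma^{-q}$ dominates $2|x|^{-q}$ and the integrand is at least $\gamma^{-q}/2$ on $A_{r_0}\cap G$.

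Finally, I would fix parameters in order: first $c_1$ small (depending on $\mu$ and $d$) so that $|G \cap (B_{2r_0}\setminus B_{r_0})| \geq (1-\mu/2)|B_{2r_0}\setminus B_{r_0}|$ and hence by set arithmetic $|A_{r_0}\cap G|\geq(\mu/2)|B_{2r_0}\setminus B_{r_0}|$; then $\gamma_1$ much smaller than $c_1$ so $\gamma_1^{-q}$ dominates $|x|^{-q}$; finally $q_1$ large enough so that the positive term of size $c\lambda\mu\gamma_1^{-q_1}$ dominates the $Cq_1^2$ loss from the near part and the $C(q_1+1)$ loss from the tail, producing $Lb(x) \geq Cq_1|x|^{-q_1-\alpha}$. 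The main technical obstacle is the geometric covering estimate that $G$ occupies more than a $(1-\mu)$-fraction of the annulus (a purely geometric statement whose quantitative form forces $c_1$ to depend on the constant $\mu$ from (A3)), together with handling the nonnegative remainder $K - M_0\Indicator_{A_{r_0}}$ acting on $\delta_h b$, whose sign is not a priori controlled and which may require an iteration over scales rather than a single-scale application of (A3).
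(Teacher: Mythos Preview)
Your approach captures one half of the argument---essentially the paper's Lemma~\ref{SpecialLem:GamSmall}---but has a genuine gap for $\alpha$ close to $2$. The positive contribution you extract from the middle annulus via (A3) carries the factor $M_0 = (2-\alpha)\lambda r_0^{-d-\alpha}$, so your main positive term is of size $(2-\alpha)c\lambda\mu\gamma_1^{-q_1}$, not $c\lambda\mu\gamma_1^{-q_1}$ as you wrote. Meanwhile your near-piece lower bound $-Cq^2|x|^{-q-2}\Lambda\rho^{2-\alpha}$ comes from \eqref{e:integral-square-Br}, whose constant does \emph{not} vanish as $\alpha\to 2$ (the dyadic sum $(2-\alpha)\sum_k 2^{-k(2-\alpha)}$ stays bounded above and below). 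Hence no fixed choice of $\gamma_1, q_1$ can make the positive term dominate uniformly over $\alpha\in(\alpha_0,2)$: as $\alpha\to 2$ the plateau contribution dies while the quadratic loss survives. Your closing remark about ``iteration over scales'' does not fix this, since iterating the plateau argument over scales still leaves every summand with a $(2-\alpha)$ factor and no compensating divergent series.

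The paper resolves this by treating the two regimes separately and reversing your parameter order. For $\alpha$ close to $2$ (Lemma~\ref{SpecialLem:PLarge}), the positive contribution is extracted not from the plateau value $\gamma^{-q}$ but from the radial curvature term $q(q+2)|x|^{-q-2}(h_1)^2$ in the Taylor expansion of $\delta_h b$, summed over \emph{all} dyadic rings inside a small ball $B_{r_2}$. Applying (A3) on each ring via Lemma~\ref{SpecialLem:ArHitting}(ii) gives a total of order $q(q+2)\lambda\mu c_2$ times $(2-\alpha)\sum_j 2^{-j(2-\alpha)}$, which is bounded below uniformly in $\alpha$. One first fixes $\gamma=\gamma_0$ and chooses $q_1$ large so this curvature term beats the tangential $-q\int|h|^2 K$ loss, then chooses $\alpha_1$ close to $2$ so the remaining error terms (which carry a bare $(2-\alpha)$ factor) are negligible. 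Only afterward, for $\alpha\in(\alpha_0,\alpha_1]$ where $(2-\alpha)$ is bounded below, does the single-scale plateau argument run (Lemma~\ref{SpecialLem:GamSmall}): with $q_1$ already fixed, one finally chooses $\gamma_1$ small enough that $(2-\alpha_1)\lambda\mu\gamma_1^{-q_1}$ dominates all $q_1$-dependent losses.
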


\begin{remark}
Lemma \ref{SpecialLem:TheGoodB} provides a sub-solution to a stationary problem. It is a generalized version of Corollary 9.2 in \cite{caffarelli2009regularity}, Lemma 3.10 in \cite{BjCaFi-2012NonlocalGradDepOps}, and Lemmas 5.2 and 5.3 in \cite{rang2013h} to the more general class of kernels in this article.
\end{remark}

Now that we know the details of and equation for $b=b_{\gam,q}$, we will continue the calculations which will be useful to construct $p$.  For the following, we assume that $1-c_1/2\leq \abs{z} \leq 1$.  We also note that $\gam_1$, $q_1$, and $C$ will be determined subsequently.

\begin{align}
	b(z) = b_{\gam_1,q_1}(z) &= \abs{z}^{-q_1}\ \text{if}\  1 - c_1/2 < \abs{z} \label{e:Bump1} \\
	\grad b(z) &= -q_1 z \abs{z}^{-q_1-2} \label{e:Bump2}\\
	-\frac{1}{\al} \grad b(z)\cdot z &= \frac{1}{\al}q_1\abs{z}^{-q_1} \label{e:Bump3}\\
	C_0 \abs{\grad b(z)} &= C_0 q_1 \abs{z}^{-q_1-1} \label{e:Bump4}\\
	-M^-b(z) &\leq -Cq_1\abs{z}^{-q_1-\al}. \label{e:Bump5} 
\end{align}

Now that we have sorted out the details regarding $b_{\gam_1,q_1}$, we can proceed with the proof of Lemma \ref{l:barrierAlBigger1}. Some complications arise from the need to satisfy the boundary conditions in (\ref{e:barrier3}).

We will give the proof of Lemma \ref{l:barrierAlBigger1} and then afterwards indicate the few steps which are modified to prove Lemma \ref{l:barrierAlSmaller1}.

\begin{proof}[Proof of Lemma \ref{l:barrierAlBigger1}]
We proceed with defining $p$ in terms of $\Phi$ as described in \eqref{e:BarrierPdefViaPhi} and \eqref{e:BarrierPdefForLargerT}. Note that this construction gives a function $p$ which is unbounded around the origin $(0,0)$. To fix that, at the end of the proof, we have an extra truncation step.

	In order to satisfy the boundary conditions (\ref{e:barrier3}), $\Phi$ will be the following truncated version of $b_{\gam,q}$,	\begin{equation*}
		 \Phi(z) = \max\left\{  b_{\gam,q}(z) - b_{\gam,q}(e_1), 0     \right\}.
	\end{equation*}
This function $\Phi$ is zero outside of $B_1$ and strictly positive inside $B_1$.
The properties of the function $b$ will be used to make the value of $M^- \Phi$ large in $B_1 \setminus B_{1-c_2/2}$.

Recall the variable $z$,
	\begin{equation}\label{e:BarrierProofTXScale}
		z=\frac{rx}{t^{1/\al}}.
	\end{equation}
We need to verify \eqref{e:PhiGoal} and \eqref{e:BarrierTLargeComputation} in order to account for the regions $t\in[0,r^\al]$ and $t\in (r^\al,\infty)$.  We will need to select parameters and constants to work for both ranges of $t$.  But we note that all of the parameters are such that they can be chosen to satisfy both conditions simultaneously.

\noindent
\textbf{Part 1, $t\in[0,r^\al]$}.

Note the following relations, for $z \in B_1$
\begin{align*}
\grad \Phi(z) &= \grad b(z), \\
M^- \Phi(z) &\geq M^- b(z).
\end{align*}
We need to find parameters so that \eqref{e:PhiGoal} holds. The computation will be different in the three regions $|z| \leq 1-c_1/2$, $1-c_1/2 < |z| < 1$, and $|z| \geq 1$.

Replacing \eqref{e:Bump2}, \eqref{e:Bump3}, \eqref{e:Bump4} and \eqref{e:Bump5} in the left hand side of \eqref{e:PhiGoal}, we get
\begin{equation} \label{e:step1goal} \begin{aligned}
-q_0 \Phi(z) - \frac 1 \alpha \grad \Phi(z) \cdot z & + r t^{1-1/\alpha} C_0 |\grad \Phi(z)| - r^\alpha M^- \Phi(z) \\
&\leq -q_0 \Phi(z) - \frac 1 \alpha \grad b(z) \cdot z + rC_0 |\grad b(z)| - r^\alpha M^- b(z)
\end{aligned}
\end{equation}
For the last inequality, we used that $t^{1-1/\alpha} \leq 1$. This is because $t \leq r^\alpha \leq 1$ and $\alpha \geq 1$. When $\alpha < 1$, the negative power of $t$ cannot be controlled and that is why we assume $C_0 = 0$ in those cases.

When $1-c_1/2 < |z| < 1$, we can ignore $-q_0b(z)$, and instead focus on 
\begin{equation}\label{e:barrierChooseBumpParams}
	- \frac{1}{\al}\grad b(z)\cdot z + r C_0\abs{\grad b(z)} - r^\al M^-b(z) \leq 0.
\end{equation}
In light of (\ref{e:Bump3}), (\ref{e:Bump4}), (\ref{e:Bump5}), it will suffice to choose $b$ so that
\begin{equation*}
	\frac{1}{\al}q_1\abs{z}^{-q_1}  + r C_0 q_1 \abs{z}^{-q_1-1} -Cq_1 r^\al\abs{z}^{-q_1-\al} \leq 0,
\end{equation*}
or more succinctly
\begin{equation}\label{e:BumpChoosingC}
	q_1 \abs{z}^{-q_1}\left( \frac{1}{\al}  + r C_0 \abs{z}^{-1} -C r^\al \abs{z}^{-\al} \right) \leq 0.
\end{equation}
After $C$ is chosen to obtain (\ref{e:BumpChoosingC}) (recall $\abs{z}\leq 1$), then $b=b_{\gam_1,q_1}$ can be fixed by Lemma \ref{SpecialLem:TheGoodB}.  The resulting $b$ will be smooth and bounded.

Switching now to the set $\abs{z}\leq 1-c_1/2$, \eqref{e:step1goal} then follows from
\begin{equation}\label{e:BumpChoosingQ0}
	\left( -q_0 \Phi(z) - \frac{1}{\al}\grad b(z)\cdot z  + r C_0 \abs{\grad b(z)} -r^\al M^- b(z)\right) \leq 0.
\end{equation}
The function $\Phi$ is strictly positive in $B_1$ and in particular it is bounded below by a positive constant in $B_{1-c_1/2}$. Since $C$, $\gam_1$, $q_1$ have all been fixed and all of the terms are bounded, we can then choose $q_0$ large enough so that (\ref{e:BumpChoosingQ0}) will also hold. 

We are only left with the case $|z| \geq 1$. Note that because of the angle singularity of the function $\Phi$ on $|z|=1$, we cannot touch the function $\Phi$ from above with any smooth function at those points. Therefore, the points $|z|=1$ play no role in $\Phi$ satisfying \eqref{e:PhiGoal} in the viscosity sense. If $|z|>1$, then $\Phi(z) = |\grad \Phi(z)| = 0$ and $M^- \Phi(z) \geq 0$ because $z$ will be at a global minimum of $\Phi$, and so \eqref{e:PhiGoal} trivially holds.

\medskip

\noindent
\textbf{Part 2, $t\in(r^\al,\infty)$.}

We now need to make sure \eqref{e:BarrierTLargeComputation} holds. The procedure is similar to the first part.

In the region $1-c_1/2 < |x| < 1$, using \eqref{e:Bump4} and \eqref{e:Bump5}, we get
\[ -C_5 \Phi(x) + C_0 |\grad \Phi(x)| - M^- \Phi(x) = -C_5 \Phi(x) + C_0 q_1 |z|^{-q_1-1} - C q_1 |z|^{-q_1-\alpha}.\]
We ignore the term $-C_5 \Phi(x) \leq 0$ and use
\[ -C_5 \Phi(x) + C_0 |\grad \Phi(x)| - M^- \Phi(x) \leq q_1 |x|^{-q_1} \left( C_0 |x|^{-1} - C |x|^{-\alpha} \right) \leq 0.\]
The last inequality holds provided that we choose $C$ large enough (which we can be done by choosing appropriate values of $\gamma$ and $q$ from Lemma \ref{SpecialLem:TheGoodB}).

In the region $|x| < 1-c_1/2$, we use that $b$ (note that $\gamma$ and $q$ are fixed in the previous step) is a given smooth function and $\Phi(x) \geq |1-c_1/2|^{-q} - 1 > 0$. Therefore, picking a large enough $C_5$, we can make \eqref{e:BarrierTLargeComputation} hold.

If $|x| \geq 1$, then the equation holds just as in the first part of this proof, owing the the fact that $z$ will be at a global minimum of $\Phi$.
Note that the constant $C$ which we use for picking $\gamma_1$ and $q_1$ in Lemma \ref{SpecialLem:TheGoodB} need to be large enough to satisfy the requirements of both part 1 ($t \in [0,r^\alpha]$) and part 2 ($t > r^\alpha$) of this proof.

\noindent \textbf{Part 3 - The truncation step.}

Now there is one last step of truncation.  This arises because at this stage, the function $t^{-q_0}\Phi(rx/t^{1/\al})$ has a singularity at $x=0$ and $t\to0$, which of course violates requirement (\ref{e:barrier2}).
 
We define the function 
\[
\tilde p(x,t) := t^{-q_0}\Phi(\frac{rx}{ t^{1/\al}}).
\]
and $p$ will be defined as a truncation of $\tilde p$ to be compatible with (\ref{e:barrier2}). Importantly in this truncation we need to not destroy the equation satisfied by our choice of $\tilde p$ outside of $B_r\times[0,r^\al]$.  That means that we should only truncate at a small enough $t$ so that the support of $\tilde p(\cdot,t)$ is contained in $B_r$.  This way, for such $x$ outside of $B_r$ the desired equation is trivially satisfied because the equation will be evaluated where $\tilde p_t =0$ and $\tilde p(x,t)=0$, which is the global minimum for $\tilde p$, giving $\grad \tilde p=0$ and $M^-\tilde p\geq 0$.  Given the scaling $z=rx/t^{1/\al}$ and that the support of $\Phi$ is in $B_1$ we see that a convenient choice for truncation will be when the graph of $t=(r\abs{x})^{\al}$ intersects the line $|x|=r$, hence at $t=r^{2\al}$.

Accordingly, we define (note for each $t$, $\tilde p$ has its max at $x=0$)
\begin{align*}
p(x,t) &= \frac{\min\{\tilde p(x,t),  \tilde p(0,r^{2\al}) \}}{ \tilde p(0,r^{2\al}) }\\
&= \left( r^{-2\al q_0} \Phi(0)   \right)^{-1} \min\{\tilde p(x,t),  r^{-2\al q_0}\Phi(0) \}.
\end{align*} 
This now gives a complete description of $p$ for $t$ in both $(0,r^\al)$ and $[r^\al,\infty)$ via (\ref{e:BarrierPdefViaPhi}) and (\ref{e:BarrierPdefForLargerT}) respectively.  

The inequality \eqref{e:barrier4} follows by a direct inspection using the expression  \eqref{e:BarrierPdefForLargerT} for $\tilde p$. We get that for $t > r^\alpha$ and $|x| \leq 3/4$,
\[ p(x,t) = \left( r^{-2\al q_0} \Phi(0)   \right)^{-1} e^{-C_5(t - r^\alpha)} r^{-\alpha q_0} \Phi(x) \geq r^{\alpha q_0} e^{-C_5(t - r^\alpha)} \min_{B_{3/4}} \Phi.\]

We note that the truncation expression has shown that the choice of $q$ for the lower bound requirement in (\ref{e:barrier4}) will be $q=\al q_0$. The choice of radius $3/4$ in \eqref{e:barrier4} is irrelevant, since a similar lower bound would hold if $3/4$ is replaced by any other number smaller than one.

This completes the proof of Lemma \ref{l:barrierAlBigger1}.
\end{proof}

We now mention where the proof of Lemma \ref{l:barrierAlSmaller1} deviates from the previous one.

\begin{proof}[Proof of Lemma \ref{l:barrierAlSmaller1}]
	One needs to go back and remove the term $C_0\abs{\grad p}$ from all of the calculations.  Note this was the only term affected by the factor $t^{1-\alpha/2}$ which would be unbounded if $\alpha<1$.
\end{proof}

\subsection{The proof of Lemma \ref{SpecialLem:TheGoodB} }\label{sec:BarrierTechnicalDetails}

Lemma \ref{SpecialLem:TheGoodB} will be attained in two stages, Lemmas \ref{SpecialLem:PLarge} and \ref{SpecialLem:GamSmall}.  First we develop some auxiliary results related to $b$.  First we make a useful observation about the behavior of $\del_h b$.

\begin{lemma}\label{SpecialLem:DeltaYCalc}
	Assume $\al\in[1,2)$. If $b=b_{\gam,q}$ is as in (\ref{SpecialEq:fHatDef}), then for some $r_0$ universal and $C(q)$ so that for $\abs{h}\leq r_0$   and $1-c_1/2 < |x| < 1$,
	\[
	\del_h b(x) \geq -q\frac{\abs{h}^2}{|x|^{q+2}} + q(q+2)\frac{(h_1)^2}{|x|^{q+2}} - C(q)\abs{h}^3,
	\]	
	(this is only relevant-- and only invoked-- for $\al>1$, otherwise we would use a different expansion for $\al<1$).
\end{lemma}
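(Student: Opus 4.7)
The plan is to reduce the claim to a straightforward Taylor expansion of $b$ at $x$, using one simple global lower bound on $b$ to handle the possibility that $x+h$ falls outside the region where $b$ is given by the explicit power $|y|^{-q}$.

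By the rotational symmetry of $b$, one may assume without loss of generality that $x = |x|e_1$, so that $h_1$ denotes the component of $h$ in the radial direction of $x$. The key observation about $b$ itself is the global inequality
\begin{equation*}
b(y) \geq |y|^{-q} \quad \text{whenever } |y| \geq \gamma,
\end{equation*}
which is evident from the piecewise definition of $\hat b$: it is equality on $\{|y| \geq 1-c_1/2\}$, while on the transition strip $1-c_1 \leq |y| \leq 1-c_1/2$ it follows from the standing convention $b_{\gamma,q} \geq \min(\gamma^{-q}, |y|^{-q})$ (the minimum being $|y|^{-q}$ since $|y| \geq \gamma$).

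Now fix a universal $r_0 < c_1/4$. For $|h| \leq r_0$ and $1-c_1/2 < |x| < 1$, the point $x+h$ satisfies $|x+h| > 1 - 3c_1/4 > \gamma$, so the inequality above yields $b(x+h) \geq |x+h|^{-q}$. Since also $b(x) = |x|^{-q}$ and $\nabla b(x) = -q|x|^{-q-2}x$ exactly at this $x$, I would obtain
\begin{equation*}
\delta_h b(x) \geq |x+h|^{-q} - |x|^{-q} + q|x|^{-q-1} h_1.
\end{equation*}
For $\alpha = 1$, the indicator $\mathbbm{1}_{B_1}(h)$ equals $1$ on $\{|h| \leq r_0\}$, so no modification of this identity is needed.

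The conclusion then follows by Taylor-expanding the smooth function $F(h) := |x+h|^{-q}$ to second order about $h=0$: computing the Hessian of $|y|^{-q}$ and specializing to $x = |x|e_1$ produces the quadratic form in $h_1^2$ and $|h|^2$ divided by $|x|^{q+2}$ with the announced coefficients, while uniform bounds on the third derivatives of $|y|^{-q}$ on $\{|y| \geq 1 - c_1\}$ yield the cubic error $C(q)|h|^3$. The only step requiring any care is that the transition region $1-c_1 \leq |x+h| \leq 1-c_1/2$ is handled not by an explicit analysis of the mollifier $m_{\gamma,q}$ (about which we have no quantitative information) but by the global inequality above; everything else is routine calculus.
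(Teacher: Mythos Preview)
Your proposal is correct and follows essentially the same approach as the paper's proof, which is a one-line invocation of Taylor's theorem together with the observation that $b(x)=|x|^{-q}$ in the relevant annulus and $b(x+h)\geq |x+h|^{-q}$ for small $h$. You simply spell out the details the paper omits: the rotational reduction to $x=|x|e_1$, the explicit reason why $b(x+h)\geq |x+h|^{-q}$ even when $x+h$ may fall into the transition strip, and the Hessian computation for $|y|^{-q}$.
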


\begin{proof}
	Taylor's theorem.  Note that $h$ is restricted to be in a small set, $B_{r_0}$, and so actually $b(x)=\abs{x}^{-q}$ and $b(x+h) \geq |x+h|^{-q}$.
\end{proof}
The next lemma that says ours assumptions allow that for all $r\leq r_1$, $A_r$ intersects annuli centered at $-e_1$ in a uniformly non-trivial fashion.  This feature is essential to be able to utilize the lower bounds on $K$ in (\ref{e:assumption-below}).

\begin{lemma}\label{SpecialLem:ArHitting}

	There exist constants $c_1$, $c_2$ and $r_1$ (all small), so that:
	\begin{itemize}
		\item[(i)] For any $x$ so that $1-c_1 < |x| < 1$,
	\[
	\left| 
	A_{r_1} \intersect B_{1-c_1}(-x)
	\right|
	\geq \frac{\mu}{4} \abs{ B_{2r_1}\setminus B_{r_1}},
	\]
	
	\item[(ii)] for all $r$
		\[
		\abs{A_r \intersect \{h: (h_1)^2\geq c_2\abs{h}^2  \} }\ \geq \frac{\mu}{2}\abs{B_{2r}\setminus B_r}.
		\] 
	\end{itemize}
\end{lemma}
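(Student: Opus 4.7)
My plan is to use only the symmetry $A_r=-A_r$ and the density bound $|A_r|\geq \mu|B_{2r}\setminus B_r|$ from assumption (A3); the equation itself plays no role. Each claim reduces to showing that the prescribed set covers all but a small fraction of the annulus.

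Part (ii) is immediate by scaling. The complement $\set{h:h_1^2<c_2|h|^2}$ is an equatorial slab about $\{h_1=0\}$, and the fraction
\[ \frac{\abs{(B_{2r}\setminus B_r)\intersect \set{h:h_1^2<c_2|h|^2}}}{\abs{B_{2r}\setminus B_r}} \]
is independent of $r$ (by rescaling $h$) and tends to zero as $c_2\to 0$. I will fix $c_2$ small enough that this ratio is at most $\mu/2$; combined with $|A_r|\geq \mu|B_{2r}\setminus B_r|$, this yields the claim.

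For part (i), the key geometric observation is that $|x|>1-c_1$ forces $B_{1-c_1}(x)$ and $B_{1-c_1}(-x)$ to be disjoint: the distance between their centers is $2|x|>2(1-c_1)$, exactly the sum of their radii. Since $-B_{1-c_1}(x)=B_{1-c_1}(-x)$ and $A_{r_1}$ is symmetric, $A_{r_1}$ meets both balls in equal measure, so it suffices to show $B_{1-c_1}(x)\union B_{1-c_1}(-x)$ covers all but a $\mu/2$-fraction of $B_{2r_1}\setminus B_{r_1}$. Expanding $|h\pm x|^2$, any $h$ outside both balls must satisfy
\[ \abs{h\cdot x} \leq \tfrac{1}{2}\bigl(|h|^2+|x|^2-(1-c_1)^2\bigr) \leq \tfrac{1}{2}|h|^2+c_1. \]
For $|h|\geq r_1$ and $|x|\geq 1-c_1$ this in turn forces $|h\cdot x|/(|h||x|)\leq C(r_1+c_1/r_1)$, so the bad set sits in an equatorial slab of thickness $O(r_1+c_1/r_1)$ perpendicular to $x$. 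Restricted to the annulus, this has measure at most $C_d(r_1+c_1/r_1)|B_{2r_1}\setminus B_{r_1}|$. I will balance with $r_1\sim\sqrt{c_1}$ (taking $c_1$ sufficiently small) to make this bound less than $\mu/2$, after which the symmetric splitting yields $|A_{r_1}\intersect B_{1-c_1}(-x)|\geq \tfrac{\mu}{4}|B_{2r_1}\setminus B_{r_1}|$.

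The only subtle point is the sequential selection of the three universal constants $c_1,c_2,r_1$: $c_1$ and $c_2$ must both be taken small, while $r_1$ must be small enough for the surrounding barrier construction but not too small relative to $\sqrt{c_1}$ for the slab estimate. Since Lemma \ref{SpecialLem:TheGoodB} allows $c_1$ to be freely reduced, this coupling is consistent and not a real obstacle.
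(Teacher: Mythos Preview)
Your proposal is correct, and part (ii) is essentially identical to the paper's argument. For part (i), however, you take a genuinely different route.

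The paper first restricts to the half-space $\{h:h\cdot x\leq 0\}$, which by symmetry carries at least half of $A_{r_1}$. It then chooses $r_1$ small so that this half-annulus is mostly contained in $B_{|x|}(-x)$ (a choice that is uniform in $x$ and independent of $c_1$), and only afterwards shrinks $c_1$ to pass from $B_{|x|}(-x)$ to $B_{1-c_1}(-x)$. The two constants are thus selected sequentially and independently.

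Your argument instead exploits the pair of balls $B_{1-c_1}(\pm x)$ simultaneously: disjointness (from $|x|>1-c_1$) together with $A_{r_1}=-A_{r_1}$ lets you halve the measure cleanly, and the algebraic expansion of $|h\pm x|^2$ confines the complement to an equatorial slab of width $O(r_1+c_1/r_1)$. This is more explicit and arguably more symmetric, at the cost of coupling the constants via $r_1\sim\sqrt{c_1}$. As you note, this coupling is harmless for the barrier construction, since Lemma~\ref{SpecialLem:TheGoodB} only requires some fixed small $c_1$ and $r_1$; the paper's decoupled choice offers slightly more flexibility but is not needed downstream.
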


\begin{proof}
	We first note that by symmetry of $A_r$
	\begin{equation}\label{e:ArHittingLeftHalfSpace}
	\left| A_r\intersect \left(B_{2r}\setminus B_r\right) \intersect \{h : h \cdot x \leq 0\} \right| \geq \frac{\mu}{2} \abs{B_{2r}\setminus B_r}.
\end{equation}
	Now we will establish (i).  We first choose $r_1$ small enough so that 
	\[
	\abs{\left( (B_{2r_1}\setminus B_{r_1}) \intersect \{h : h \cdot x \leq 0\}  \right) \setminus B_{|x|}(-x)} \leq \frac{\mu}{8}\abs{(B_{2r_1}\setminus B_{r_1})\intersect\{h : h\cdot x\leq 0\}}.
	\]
	Note that this choice of $r_1$ can be done uniformly for all $1-c_1 < |x| < 1$.
	
	Let us define the failed set where $A_r$ cannot reach $B_{1-c_1}(-x)$ as
	\[
	F:= \left( (B_{2r_1}\setminus B_{r_1}) \intersect \{h: h\cdot x\leq 0\}  \right) \setminus B_{1-c_1}(-x)
	\]
	With $r_1$ fixed, we can choose $c_1$ small enough so that 
	\begin{equation}\label{e:ArHittingBadSetSmall}
	\abs{F}\leq \frac{\mu}{4}\abs{(B_{2r_1}\setminus B_{r_1})\intersect\{h : h\cdot x\leq 0\}},
\end{equation}
	This is possible because
	\begin{align*}
	 |F| &\leq \abs{\left( (B_{2r_1}\setminus B_{r_1}) \intersect \{h : h \cdot x \leq 0\}  \right) \setminus B_{|x|}(-x)} + |B_{|x|} \setminus B_{1-c_1}| \\
	 &\leq \frac{\mu}{8}\abs{(B_{2r_1}\setminus B_{r_1})\intersect\{h : h\cdot x\leq 0\}} + C(1 - (1-c_1)^d).
	\end{align*}
	Finally, combining (\ref{e:ArHittingLeftHalfSpace}) with (\ref{e:ArHittingBadSetSmall}) we obtain (i).
	
	To establish (ii), we note that
	\begin{align*}
	\abs{A_r \intersect \{h: (h_1)^2\geq c_2\abs{h}^2  \} } &\geq |A_r| - |\{h \in B_{2r} \setminus B_r : h_1^2 < c_2 |h|^2\}|,  \\
	&\geq (\mu - C c_2) |B_{2r} \setminus B_r|.
	\end{align*}
	for a universal constant $C$. Thus, we simply take $c_2$ small enough so that $(\mu - C c_2) \geq \mu/2$.
\end{proof}

\begin{note}\label{SpecialNote:GammaOrdering}
	If $\gam_1<\gam_2$ and $q$ is fixed, then for all $y$
	\begin{equation*}
		b_{\gam_1,q}(y)\geq b_{\gam_2,q}(y),
	\end{equation*}
	and the two functions are equal when $\displaystyle \abs{y}\geq 1-c_1/2$, hence 
	\begin{equation*}
		M^{-}b_{\gam_1,q}(x)\geq M^-b_{\gam_2,q}(x),
	\end{equation*}
	for all $\displaystyle \abs{x}\geq 1-c_1/2$.
\end{note}

Next we make the first choice of parameter for $b$.  It is the selection of the exponent, $q$, and it only uses the information about the family $\mathcal K$ for $\al$ very close to $2$.
\begin{lemma}\label{SpecialLem:PLarge}
	Let $\gam\leq\gam_0=1/4$ be fixed.  Let $C>0$ be given.  Then, there exist a $q_1\geq 1$ and an $\al_1$, depending only on $C$, $\gam_0$, $C_0$, $\mu$, $d$, $\lam$, $\Lam$,   such that 
	\begin{equation*}
		M^-b_{\gam,q_1}(x)\geq Cq_1 \abs{x}^{-q_1-\al} \ \ \ \text{for all}\ \ 1-c_1/2 < \abs{x} < 1,
	\end{equation*}
	 for all orders, $\al\in(\al_1,2)$ and for all $\gam\leq \gam_0$.
\end{lemma}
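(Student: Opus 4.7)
My plan is to fix an arbitrary admissible $K$ and prove directly the pointwise bound $\int \delta_h b_{\gam,q_1}(x)K(h)\,dh \geq Cq_1|x|^{-q_1-\al}$; the lemma then follows on taking the infimum over $K$. Note~\ref{SpecialNote:GammaOrdering} reduces this to the worst case $\gam=\gam_0=1/4$, and rotation invariance of $M^-$ together with the radiality of $b$ lets me assume $x=|x|e_1$, matching the coordinate convention of Lemma~\ref{SpecialLem:DeltaYCalc}. I then split the integration at $|h|=r_0$, the universal radius from that lemma.

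For the inner piece $|h|\leq r_0$, I substitute the Taylor bound
\[
\delta_h b(x)\geq q|x|^{-q-2}\bigl[(q+2)h_1^2-|h|^2\bigr]-C(q)|h|^3
\]
and control each term. The coefficient of $h_1^2$ is bounded below by decomposing $B_{r_0}$ dyadically into $B_{2r_k}\setminus B_{r_k}$ with $r_k=2^{-k-1}r_0$, intersecting each annulus with $A_{r_k}\cap\{h_1^2\geq c_2|h|^2\}$ as in part (ii) of Lemma~\ref{SpecialLem:ArHitting}, and using the pointwise lower bound $K\geq(2-\al)\lam r_k^{-d-\al}$ from (A3); the divergent geometric sum $\sum_k 2^{-k(2-\al)}$ absorbs the prefactor $(2-\al)$ to yield $\int_{B_{r_0}} h_1^2 K \geq c_*\lam r_0^{2-\al}$ with $c_*$ universal. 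The companion upper bounds $\int_{B_{r_0}} |h|^2 K \leq C\Lam r_0^{2-\al}$ and $\int_{B_{r_0}} |h|^3 K \leq C(2-\al)\Lam r_0^{3-\al}$ follow from (A2) by an analogous dyadic splitting, in the spirit of Lemma~\ref{l:KernelIntegrationBounds}. For the outer piece, since the choice of $\al_1$ enforces $\al>1$, I write $\delta_h b(x)=[b(x+h)-b(x)]-\nabla b(x)\cdot h$; using $b\geq 0$ and $b(x)=|x|^{-q}$, the difference integral is bounded below by $-C(2-\al)\Lam r_0^{-\al}|x|^{-q}$, and Lemma~\ref{l:KernelIntegrationBounds}(3) bounds the gradient correction in magnitude by $q|x|^{-q-1}\cdot C\Lam r_0^{1-\al}$.

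Collecting the four error terms alongside the main positive term of order $q^2\lam r_0^{2-\al}|x|^{-q-2}$: the cubic and outer-value errors both carry a factor $(2-\al)$ and so are subordinate once $\al_1$ is taken close enough to $2$ (depending on $q_1$). The bad quadratic from the Taylor expansion has ratio $\sim \Lam/(q\lam)$ to the main term, and the gradient correction has ratio $\sim \Lam/(q\lam r_0)$; both are made arbitrarily small by choosing $q_1$ large compared with $\Lam/(\lam r_0)$. Requiring finally that the main term exceed the target $Cq|x|^{-q-\al}$ amounts, via $|x|^{\al-2}\leq 2$ and $r_0^{2-\al}\geq 1/2$ for $\al\geq\al_1$, to a universal lower bound of the form $q_1\geq K_0 C/(c_*\lam)$, completing the choice of constants.

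The principal obstacle here is the outer gradient correction. Unlike the cubic and outer-value errors, it inherits no vanishing $(2-\al)$ factor: the bound in (A4) carries only $|1-\al|$, which tends to $1$ as $\al\to 2$, so the constant furnished by Lemma~\ref{l:KernelIntegrationBounds}(3) stays bounded away from zero. The resolution is structural rather than perturbative: the main second-order term scales quadratically in $q$ while the gradient correction is only linear in $q$, so taking $q_1$ large (depending on $\Lam/\lam$ and the universal $r_0$) forces the main term to dominate uniformly as $\al$ approaches $2$.
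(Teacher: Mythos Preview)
Your proof is correct and follows essentially the same architecture as the paper: reduction to $\gamma=\gamma_0$ via Note~\ref{SpecialNote:GammaOrdering}, the Taylor expansion of Lemma~\ref{SpecialLem:DeltaYCalc} on $B_{r_0}$, the dyadic lower bound for $\int h_1^2 K$ using Lemma~\ref{SpecialLem:ArHitting}(ii), and the competing upper bounds from (A2) and Lemma~\ref{l:KernelIntegrationBounds}, followed by choosing $q_1$ large and then $\alpha_1$ close to $2$.

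One point worth flagging: your treatment of the outer gradient correction is actually more careful than the paper's. The paper's displayed bound \eqref{SpecialEq:BoundFromBelowOutsideOfBall} places a factor $(2-\alpha)$ in front of \emph{both} the $r_2^{-\alpha}$ and $r_2^{1-\alpha}$ contributions, but Lemma~\ref{l:KernelIntegrationBounds}\eqref{e:integral-linear-tail} only yields $C\Lambda r^{1-\alpha}$ for the drift piece, with constant bounded away from zero as $\alpha\to 2$ (since $|1-\alpha|/(1-2^{1-\alpha})\to 2$). You correctly observe this and absorb the gradient correction by the $q$-linear versus $q$-quadratic comparison instead: the correction is $q|x|^{-q-1}C\Lambda r_0^{1-\alpha}$ while the main term is $q(q+2)|x|^{-q-2}c_*\lambda r_0^{2-\alpha}$, so their ratio is $O(\Lambda/(q\lambda r_0))$ uniformly in $|x|\le 1$ and in $\alpha$. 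This repairs what appears to be a small oversight in the paper's bookkeeping while leaving the overall strategy unchanged.
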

Then once the $q$ has been chosen, we can finish the definition of $b$ by fixing the truncation height, $\gam^{-q}$, to be large enough (so $\gam$ small enough).  This allows to fix one function which satisfies the special subsolution property for all $\al\in[\al_0,2)$.
\begin{lemma}\label{SpecialLem:GamSmall}
	Let $C>0$ and $q_1$ be as in lemma \ref{SpecialLem:PLarge}.  There there exists a  $\gam_1\leq \gam_0=1/4$  such that 
	\begin{equation*}
		M^-b_{\gam_1,q_1}(x)\geq Cq_1 \abs{x}^{-q_1-\al} \ \ \ \text{for all}\ \ 1-c_1/2 < \abs{x} < 1,
	\end{equation*}
	 for all orders, $\al\in(\al_0,\al_1]$.
\end{lemma}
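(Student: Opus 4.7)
The plan is to exploit the fact that $q_1$ is already fixed by Lemma~\ref{SpecialLem:PLarge}, so only $\gamma_1$ remains as a free parameter, and it only controls the value $\gamma_1^{-q_1}$ of $b_{\gamma_1,q_1}$ in the deep well $\{|y|\le 1-c_1\}$. Taking $\gamma_1$ small drives this value to $+\infty$, which---through the lower bound (A3) on every $K\in\mathcal K$---produces a large positive contribution to $M^- b_{\gamma_1,q_1}(x)$ that dominates all other ($\gamma_1$-independent) terms, uniformly for $\alpha$ in the compact range $(\alpha_0,\alpha_1]$.

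Concretely, I would decompose $b_{\gamma_1,q_1}=b_1+b_2$, where $b_1$ is a fixed $C^2$ bounded function on $\R^d$ (independent of $\gamma_1$) that agrees with $|y|^{-q_1}$ on $\{|y|\ge 1-c_1/2\}$ and is bounded by $C_b:=(1-c_1/2)^{-q_1}$ everywhere, and $b_2:=b_{\gamma_1,q_1}-b_1\ge 0$ is supported in $B_{1-c_1/2}$ with $b_2(y)\ge \gamma_1^{-q_1}-C_b$ on $\{|y|\le 1-c_1\}$. Subadditivity of $M^-$ gives
\begin{equation*}
M^- b_{\gamma_1,q_1}(x) \;\ge\; M^- b_1(x) + M^- b_2(x).
\end{equation*}
Since the $C^2$ norm of $b_1$ depends only on $q_1$ and universal quantities, Corollary~\ref{c:classical-estimates-pucci} yields $M^- b_1(x)\ge -C'$ uniformly in $\gamma_1$ and in $\alpha\in[\alpha_0,2)$. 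For $M^- b_2(x)$ at a point with $1-c_1/2<|x|<1$, I use that $b_2\equiv 0$ in a neighborhood of $x$, so $b_2(x)=0$ and $\grad b_2(x)=0$; consequently $\del_h b_2(x)=b_2(x+h)\ge 0$ for every value of $\alpha$. Fixing any $K\in\mathcal K$ and combining (A3) with Lemma~\ref{SpecialLem:ArHitting}(i), the set $A_{r_1}\cap B_{1-c_1}(-x)$ has measure at least $\tfrac{\mu}{4}|B_{2r_1}\setminus B_{r_1}|$; on this set $x+h\in B_{1-c_1}$ so that $b_2(x+h)\ge \gamma_1^{-q_1}-C_b$, while $K(h)\ge (2-\alpha)\lambda r_1^{-d-\alpha}$. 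Integrating,
\begin{equation*}
\int_{\R^d}\del_h b_2(x)\,K(h)\dd h \;\ge\; c_3(\gamma_1^{-q_1}-C_b)(2-\alpha)\lambda\mu\,r_1^{-\alpha},
\end{equation*}
and since $\alpha\in(\alpha_0,\alpha_1]$ is bounded away from both $0$ and $2$, the factor $(2-\alpha)r_1^{-\alpha}$ stays above a universal positive constant, yielding $M^- b_2(x)\ge c_4(\gamma_1^{-q_1}-C_b)$ with $c_4>0$ universal.

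Combining the two pieces produces $M^- b_{\gamma_1,q_1}(x)\ge c_4\gamma_1^{-q_1}-c_4 C_b-C'$, and since $|x|^{-q_1-\alpha}$ is uniformly bounded on the strip by $(1-c_1/2)^{-q_1-2}$, the target inequality $M^- b_{\gamma_1,q_1}(x)\ge Cq_1|x|^{-q_1-\alpha}$ follows by choosing $\gamma_1$ small enough in terms of $C$, $q_1$, $c_4$, $C'$, and $c_1$. The main technical point---and the reason the construction of $b$ is split into Lemmas~\ref{SpecialLem:PLarge} and \ref{SpecialLem:GamSmall}---is verifying that $c_4$ and $C'$ do not degenerate as $\alpha$ varies in $(\alpha_0,\alpha_1]$; this is where the strict separations $\alpha_0>0$ and $2-\alpha_1>0$ are essential, since they keep $(2-\alpha)$, $r_1^{-\alpha}$, and the tail factors in Corollary~\ref{c:classical-estimates-pucci} under uniform control. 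For $\alpha$ close to $2$ this "deep well" contribution is too weak to beat the required right-hand side, which is exactly why Lemma~\ref{SpecialLem:PLarge} is needed to handle that regime through a different choice of $q$.
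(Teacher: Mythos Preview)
Your proof is correct and follows essentially the same strategy as the paper: isolate a $\gamma_1$-independent ``bounded'' piece whose contribution to $M^-$ is controlled uniformly, and then use Lemma~\ref{SpecialLem:ArHitting}(i) together with (A3) to extract a large positive contribution of size $\approx (2-\alpha)\lambda\mu\,r_1^{-\alpha}\,\gamma_1^{-q_1}$ from the deep well, which dominates once $\gamma_1$ is small and $\alpha\le\alpha_1<2$. The only organizational difference is that you decompose the \emph{function} as $b=b_1+b_2$ and invoke the superadditivity $M^-(b_1+b_2)\ge M^-b_1+M^-b_2$, whereas the paper keeps $b$ intact and instead splits $\delta_h b(x)$ into its positive and negative parts, using an auxiliary $C^2$ function $\phi$ (your $b_1$) that touches $b$ from below to bound $(\delta_h b)^-$ independently of $\gamma$; both routes lead to the same estimate and rely on the same two ingredients (Corollary~\ref{c:classical-estimates-pucci} for the bounded part, Lemma~\ref{SpecialLem:ArHitting}(i) for the well), with the crucial observation in each case that $(2-\alpha)$ stays bounded below on $(\alpha_0,\alpha_1]$.
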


\noindent 
First we give the proof of Lemma \ref{SpecialLem:PLarge}.

\begin{proof}[Proof of Lemma \ref{SpecialLem:PLarge} ]
	Let $x$ be any point such that $1-c_1/2 < |x| < 1$. We begin with a few simplifying observations.  First of all there is no loss of generality in assuming $\al>1$ for this lemma-- indeed the end of the proof culminates with a choice of $\al_1$ which is sufficiently close to $2$ (hence $\del_h b(x)$ uses only one case for $\al>1$).   Second, to simplify notation we drop the $\gam,q$ dependence and denote $b_{\gam,q} = b$.   
	
	To obtain the bound we want, we only need the contribution of $\del_h b(x)$ to $M^-b(x)$ in a small ball, $h\in B_{r_2}$, for some $r_2$ fixed with say $r_2 = \min\{r_0, c_1/2\}$, where $r_0$ originates in Lemma \ref{SpecialLem:DeltaYCalc} and $c_1$ in comes from Lemma \ref{SpecialLem:ArHitting}.  This is because the large curvature of the graph of $b$ in the $h_1$ direction can be used to dominate the integral at the expense of all the other terms.
	
	We also note that for $h\in\R^d\setminus B_{r_2}$, we have
\begin{equation*}
	\del_h b(x) \geq \inf_{h \in \R^d\setminus B_{r_2}}\left( b(x+h) - b(x) - q |x|^{-q-2} x \cdot h \right) \geq -C_q \left(1+ \frac x {|x|} \cdot h \right).
\end{equation*}
Here $C_q = \max\left( q(1-c_1/2)^{-q-1} , (1-c_1/2)^{-q} \right)$.

Therefore, by Lemma \ref{l:KernelIntegrationBounds},  we see that
\begin{equation}\label{SpecialEq:BoundFromBelowOutsideOfBall}
	\int_{\R^d\setminus B_{r_2}} \del_h b(x) K(h) \dd h \geq -(2-\alpha) C_q \Lam \left( \frac{r_2^{-\al}}{\al}  + r_2^{1-\al} \right).
\end{equation}
Furthermore, combining Lemmas \ref{SpecialLem:ArHitting} and \ref{SpecialLem:DeltaYCalc} we see that on each ring, $B_{2^{-k}r}\setminus B_{2^{-k-1}r}$, we can enhance the positive contribution to $M^-f(x)$ by manipulating the term
\begin{equation*}
	\frac{q(q+2)}{|x|^{q+2}} \int_{B_{2^{-k}r}\setminus B_{2^{-k-1}r}} (h_1)^2 K(h)\dd h.
\end{equation*}
By Lemma \ref{SpecialLem:ArHitting} and assumption (A3), we see that
{\allowdisplaybreaks
\begin{align*}
	&\int_{B_{2^{-k}r}\setminus B_{2^{-k-1}r}} (h_1)^2 K(h)\dd h \geq \int_{A_{2^{-k-1}r}} (h_1)^2 K(h)\dd h\\
	&\ \ \geq \int_{A_{2^{-k-1}r}\intersect \{h:(h_1)^2\geq c_2\abs{h}^2\} } c_2\abs{h}^2 K(h)\dd h\\
	&\ \ \geq c_2(2^{-k-1}r)^2\lam(2-\al)(2^{-k-1}r)^{-d-\al} \abs{A_{2^{-k-1}r}\intersect \{h:(h_1)^2\geq c_2\abs{h}^2\} } \\
	&\ \ \geq c_2(2^{-k-1}r)^2\lam(2-\al)(2^{-k-1}r)^{-d-\al} \frac{\mu}{2} \abs{B_{2^{-k}r}\setminus B_{2^{-k-1}r}}\\
	&\ \ =c_2\lam(2-\al)\mu 2 c(d) r^{2-\al}2^{-k(\alpha-2)},
\end{align*}
}where $c(d)$ is a purely dimensional constant that we use temporarily during this proof.
Hence adding up the contribution along all of the rings, we see
\begin{align}\label{SpecialEq:BoundBelowOnSmallBallCumlinating}
\int_{B_{r_2}} (h_1)^2 K(h)\dd h =\sum_{k=0}^\infty \int_{B_{2^{-k}r_2}\setminus B_{2^{-k-1}r_2}} (h_1)^2 K(h)\dd h \geq \left(\lam \mu c_2 c(d)\right) r_2^{2-\al},
\end{align}
where we have collected various dimensional constants into $c(d)$ in such a way that is uniform for $\al\in(0,2)$. Note that
\[ \sum_{k=0}^\infty (2-\alpha) 2^{k(\alpha-2)} = \frac{2-\alpha}{1-2^{\alpha-2}} \leq 2,\]
for all $\alpha \in (1,2)$.

We also estimate the following integral using Assumption (A2).
\begin{equation} \label{SpecialEq:BoundForCubic}  \begin{aligned}
\int_{B_{r_2}} \abs{h}^3 K(h)\dd h & = \sum_{k} \int_{B_{2^{-k}r_2}\setminus B_{2^{-k-1}r_1}} \abs{h}^3 K(h)\dd h\\
& \leq \frac{(2-\al) 2^\al}{1-2^{\al-3}} r_2^{3-\al}  \Lam.
\end{aligned} 
\end{equation}

\noindent

Now we need to put all of the pieces together.  We will use Lemma \ref{SpecialLem:DeltaYCalc} to balance the terms of different orders in both $\abs{h}$ and $q$.  We will be invoking Lemma \ref{l:KernelIntegrationBounds} as well as the bounds from both (\ref{SpecialEq:BoundFromBelowOutsideOfBall}), (\ref{SpecialEq:BoundBelowOnSmallBallCumlinating}) and \eqref{SpecialEq:BoundForCubic}.
\allowdisplaybreaks{
\begin{align}
&\int_{\R^d} \del_h b(x) K(h) \dd h \nonumber\\
&= \int_{B_{r_2}} \del_h b(x) K(h) \dd h + \int_{\R^d\setminus B_{r_2}} \del_h b(x) K(h) \dd h  \nonumber\\
&\geq \frac{q(q+2)}{|x|^{q+2}} \int_{B_{r_2}} (h_1)^2 K(h)\dd h - \frac q {|x|^{q+2}} \int_{B_{r_2}} \abs{h}^2 K(h)\dd h \nonumber \\
&\ \ \ \ \ \ \ \ \ \ \ \ \ \ \ - C(q) \int_{B_{r_2}} \abs{h}^3 K(h)\dd h + \int_{\R^d\setminus B_{r_2}} \del_h f(x) K(h) \dd h   \nonumber\\
&\geq \frac{q}{|x|^{q+2}} \left( (q+2) \left(\lam \mu c_2 c(d)\right)  - C_d\Lam \right) r_2^{2-\al} 
- (2-\alpha) \left( C_q \Lam \left( \frac{r_2^{-\al}}{\al}  + r_2^{1-\al} \right)
				- C(q) \frac{2^\al}{1-2^{\al-3}}  r_2^{3-\al}  \Lam \right).\label{SpecialEq:ChoosingQ0}
\end{align}
}
At this point we note that the first term is the one which does not have the factor $(2-\alpha)$ in front. We will first choose $q$ large to control the sign of this term. Hence we can choose $q=q_1$ large enough, depending only on the given constant $C$ and the universal parameters, so that (recall $C$, with no subscript, was the parameter given in the statement of this lemma and $|x|<1$)
\[
\frac{q}{|x|^{q+2}} \left( (q+2) \left(\lam \mu c_2 c(d)\right)  - C_d\Lam \right) r_2^{2-\al} \geq 3 C q \abs{x}^{-q-\al} r_2^{2-\alpha}.
\]

Once $q_1$ has been fixed, we can now choose $\al_1$ close enough to $2$ so that the rest of the expression in \eqref{SpecialEq:ChoosingQ0} is small.

\begin{equation*}
(2-\alpha) \left( C_q \Lam \left( \frac{r_2^{-\al}}{\al}  + r_2^{1-\al} \right)	- C(q) \frac{2^\al}{1-2^{\al-3}}  r_2^{3-\al}  \Lam \right)
\leq C q_1 \abs{x}^{-q_1-\al} r_2^{2-\alpha}.
\end{equation*}
(Recall that $r_2 = \min\{r_0, c_1/2\}$.)  Thus we have achieved 
\[
\int_{\R^d} \del_h b(x) K(h) \dd h \geq 2 Cq_1\abs{x}^{-q_1-\al} r_2^{2-\alpha}
\]
The chosen value of $\alpha$ is sufficiently close to $2$. We may choose $\alpha$ even closer to $2$ so that $r_2^{2-\alpha} > 1/2$ and
\[
\int_{\R^d} \del_h b(x) K(h) \dd h \geq C q_1 \abs{x}^{-q_1-\al}
\]
Taking an infimum over $K$ yields the result.
\end{proof}

\begin{remark}
The underlying reason why the previous proof works is because if we fix the values of $\Lambda$, $\lambda$ and $\mu$, the following limit holds
\[ \lim_{\alpha \to 2} M^- b(x) = \mathcal M^-_{\tilde \lambda, \tilde \Lambda}(D^2 b(x)),\]
where $\mathcal M^-$ is the classical minimal Pucci operator of order two and $\tilde \lambda$ and $\tilde \Lambda$ are ellipticity constants which depend on $\lambda$, $\Lambda$, $\mu$ and dimension. The proof of this fact goes along the same lines as the proof of Lemma \ref{SpecialLem:PLarge}.  
\end{remark}

\begin{remark}
	We note that the statement and proof of Lemma \ref{SpecialLem:PLarge} here, combined with step 1 of the proof of Lemma \ref{l:barrierAlBigger1}, corrects an error in the construction of the similar barrier used in \cite[Section 5]{rang2013h} where the truncation step should have been done first, not at the end of the construction.
\end{remark}

\noindent
Now we can conclude this section with the proof of Lemma \ref{SpecialLem:GamSmall}.

\begin{proof}[Proof of Lemma \ref{SpecialLem:GamSmall}]
	Let $x$ be any point such that $1-c_1/2 < |x| < 1$. First of all, we note that $q_1$ has been fixed already, so we will drop it from the notation. Since we will be manipulating the choice of $\gam$ to obtain the desired bound on $M^-b_{\gam,q_1}(x)$, it will be convenient to have bounds which transparently do not depend on $\gam$.  Therefore, as above we keep $\gam_0=1/4$ fixed and we will use and auxiliary function to make some of the estimates.  Let $\phi$ be any function in $C^2(\R^d)$ such that 
	\[
	0\leq \phi\leq b_{\gam_0,q_1}\ \ \text{in}\ \R^d,
	\]
	and 
	\[
	\phi(x) = \abs{x}^{-q_1}\ \ \forall\ \abs{x}\geq 1-c_1/2.
	\]
We note that these definitions imply $\norm{\phi}_{C^2}$ can be chosen to be independent of $\gam$ (depending on universal parameters plus $\gam_0$, $q_1$).	

We now estimate the contributions from the positive and negative parts of $(\del_h b(x))^\pm$	separately.  The first estimate below is simply a use of the fact that by construction, $\phi$ touches $b$ from below at $x$, and the second one uses (\ref{SpecialEq:BoundFromBelowOutsideOfBall}).
	\begin{align}
		&\int_{\R^d} \left( \del_h f(x) \right)^- K(h) \dd h  \nonumber \\
		&\ \ \leq \int_{B_{r_1}} C(d)(\norm{\phi}_{C^{1,1}(B_{1/2}(x))}) \abs{h}^2 K(h) \dd h + \int_{\R^d\setminus B_{r_1}} \left( \del_h f(x)\right)^- K(h) \dd h \nonumber \\
		&\ \ \leq C_dC(d)\norm{\phi}_{C^{1,1}(B_{1/2}(x))}\Lam r_1^{2-\al} + C_d\frac{\Lam}{\al} r_1^{-\al} + q_0C_d\Lam r_1^{1-\al}. \label{SpecialEq:LargeGamNegPartBound}
	\end{align}

Now we move to $(\del_h b(x))^+$.   Here we will use Lemma \ref{SpecialLem:ArHitting} part (i), the important feature being that there is \emph{at least one} good ring where $(\del_h b(x))^+$ will see the influence of the value of $b$ on the set $B_{1-c_2}$.    We alert the reader to a strange term in line (\ref{SpecialEq:SmallGamIntermediateStepDelhb})-- below-- which arises simply as a worst case scenario of the three definitions of $\del_h$, and for example if $\al<1$ the term would not even be necessary.  It does not harm the computation, and so we leave it there for any of the possible three cases of $\del_h$ via $\al$.  Finally we note the important feature that we may only integrate on the set $h\in B_{1-c_1}(-x)$, which allows us to avoid the singularity of $K$ at $h=0$.  Also note if $h\in B_{1-c_1}(-x)$, then $\abs{h}\leq 2$. 
{\allowdisplaybreaks
	\begin{align}
		&\int_{\R^d} \left(\del_h f(x)\right)^+ K(h) \dd h \nonumber \\
		&\ \ \geq \int_{A_{r_1}\intersect B_{1-c_1}(-x) } \left(\del_h f(x)\right)^+ K(h) \dd h \nonumber \\
		&\ \ \geq \int_{A_{r_1}\intersect B_{1-c_1}(-x)} (\gam^{-q_1}-|x|^{-q_1})K(h) \dd h - q_1 |x|^{-q_1-1} \int_{B_{1-c_1}(-x)}\abs{h} K(h) \dd h \label{SpecialEq:SmallGamIntermediateStepDelhb} \\
		&\ \ \geq (\gam^{-q_1}-(1-c_1/2)^{-q_1}) (2-\al)\lam \int_{A_{r_1}\intersect B_{1-c_1}(-e_1)} \abs{h}^{-d-\al} \dd h \\
& \qquad \qquad \qquad \qquad \qquad \qquad \qquad	-q_1 (1-c_1/2)^{-q_1-1} \int_{B_{1-c_1}(-e_1)} 2 K(h)\dd h  \nonumber  \\
		&\ \ \geq (\gam^{-q_1}-(1-c_1/2)^{-q_1}) (2-\al)\lam r_1^{-d-\al}\frac{\mu}{4}\abs{B_{2r_1}\setminus B_{r_1}}
					-q_1 (1-c_1/2)^{-q_1-1} (2-\al)C(d,\al_0) \label{SpecialEq:LargeGamPosPartGoodeLine2}
	\end{align}
}

\noindent
We note the use of Lemma \ref{l:KernelIntegrationBounds} (\ref{e:integral-value-tail}) in the transition between the last two lines. 

Recall that the values of $c_1$ and $q_1$ were fixed in Lemmas \ref{SpecialLem:ArHitting} and \ref{SpecialLem:PLarge}. In order to conclude the proof, we see that we can choose $\gam=\gam_1$ large enough that when we add together the contribution from (\ref{SpecialEq:LargeGamNegPartBound}) and  (\ref{SpecialEq:LargeGamPosPartGoodeLine2}) the final estimate becomes greater than $C>q_1$ for all $\al\in(\al_0,\al_1)$.  We note that it is crucial to have $\al\leq \al_1<2$ in this case in order to keep $\al$ uniformly away from $2$, which would cause problems.	
\end{proof}

\section{An estimate in $L^\eps$ -- the weak Harnack inequality}
\label{sec:LEpsilon}
\label{s:Leps}

The purpose of this section is to combine the point-to-measure estimate with the special barrier to prove the $L^\eps$ estimate, also called the \emph{weak Harnack inequality}.

\begin{thm}[The $L^\eps$ estimate] \label{t:leps}
Assume $\alpha \geq \alpha_0 > 0$. Let $u$ be a function such that
\begin{align*}
u &\geq 0 \ \text{ in } \R^d \times [-1,0], \\
u_t + C_0 |\grad u| - M^- u &\leq C \ \text{ in } Q_1, \\
\end{align*}
and for the case, $\alpha < 1$, further assume $C_0=0$.
Then there are constants $C_6$ and $\eps$ such that
\[ \left( \int_{B_{1/4} \times [-1,-2^{-\alpha}]} u^\eps \dx \dd t \right)^{1/\eps} \leq C_6 \left( \inf_{Q_{1/4}} u + C \right).\]
The constants $C_6$ and $\eps$ depend on $\alpha_0$, $\lambda$, $\Lambda$, $C_0$, $d$ and $\mu$.
\end{thm}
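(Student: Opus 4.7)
The plan is to pair the growth lemma (Lemma \ref{l:growth-lemma}) with the barrier (Lemmas \ref{l:barrierAlBigger1} and \ref{l:barrierAlSmaller1}) in a de Giorgi–type iteration, and then feed the resulting level-set estimate into the parabolic covering argument alluded to in the section title. First I would carry out the standard reductions: using the translation and scaling invariance recorded in Lemma \ref{l:M+Properties}, together with the fact that $M^-$ annihilates constants, I normalize so that $C=0$, $u \geq 0$, and $\inf_{Q_{1/4}} u + C = 1$. The theorem is then equivalent to the distributional estimate
\[
\bigl|\{u > t\} \cap (B_{1/4}\times[-1,-2^{-\alpha}])\bigr| \leq C_6\, t^{-\eps}, \qquad t \geq 1,
\]
which I would obtain by integrating the layer-cake formula.

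The core step is a discrete "expansion of positivity" lemma: there exist universal $M>1$ and $\nu \in (0,1)$ such that, for every parabolic subcylinder $Q = Q_r(x_0,t_0)\subset Q_1$,
\[
\bigl|\{u \leq M^k\} \cap Q \bigr| < \nu |Q| \quad \Longrightarrow\quad \inf_{\widetilde Q} u > M^{k+1},
\]
where $\widetilde Q$ is a suitable successor cylinder of $Q$ shifted forward in time by an amount comparable to $r^\alpha$. To prove this I would argue by contradiction: after rescaling to unit size (legal by Lemma \ref{l:M+Properties}) and dividing by $M^k$, the contrapositive of Lemma \ref{l:growth-lemma} forces $\min_{Q_{1/4}} u > 1$, so $u \geq 1$ on a whole small ball at some initial time. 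Plugging this information into the barrier from Lemma \ref{l:barrierAlBigger1} (resp.\ Lemma \ref{l:barrierAlSmaller1} for $\alpha<1$) via the comparison principle Proposition \ref{p:comparisonpple} and the lower bound \eqref{e:barrier4} propagates the positivity forward in time, yielding $u \geq \eps_0 r^{q_0}$ on a macroscopic parabolic set. Choosing $M$ larger than $A_0/\eps_0$ and absorbing the barrier factor completes the iteration at level $k+1$.

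Once the local step is available, I would apply the crawling-ink-spots covering argument promised in Section \ref{sec:LEpsilon} to the decreasing level sets $E_k := \{u > M^k\} \cap B_{1/4}\times[-1,-2^{-\alpha}]$. The iteration yields $|E_{k+1}| \leq (1-c)|E_k|$ for a universal $c \in (0,1)$; iterating from $k=0$ and using that $\inf_{Q_{1/4}} u \leq 1$ gives $|E_k| \leq (1-c)^k |Q_1|$. This is precisely the distributional bound above with $\eps = \log(1/(1-c))/\log M$, which integrates to the $L^\eps$ estimate.

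The hardest step is the parabolic covering argument. Unlike the elliptic Calderón–Zygmund decomposition, the successor cylinder $\widetilde Q$ produced by the barrier lives \emph{forward} in time relative to $Q$ by a lag comparable to $r^\alpha$; a naive stacked-cubes scheme therefore double-counts in the time direction or loses the global $u\geq 0$ hypothesis near the parabolic boundary $t = -1$ and $\R^d\setminus B_1$. The construction must keep the time-lag compatible with the fractional scaling $r \mapsto r^\alpha$, must remain robust as $\alpha \to 2$ (so that constants do not blow up, mirroring the robustness of the barrier), and must handle cylinders that abut the complement where the equation is not assumed to hold. Managing these three features simultaneously -- which is the point of the "Crawling Ink Spots" lemma the authors advertise -- is where essentially all the technical weight of the argument falls; the rest of the proof is a fairly mechanical combination of Lemma \ref{l:growth-lemma}, the barrier, and comparison.
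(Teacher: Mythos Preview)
Your outline matches the paper's strategy closely: normalize, prove a stacked-cylinder expansion of positivity by combining Lemma~\ref{l:growth-lemma} with the barrier, then feed the level sets into the crawling ink spots theorem. The one substantive point you have not identified is \emph{how} the leakage at the terminal time $t=-2^{-\alpha}$ is actually controlled, and this is not absorbed into the covering lemma as you suggest.

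The crawling ink spots theorem (Theorem~\ref{t:ink-spots}) is a purely measure-theoretic statement about sets in $B_1\times\R$; it knows nothing about the boundary of the time interval. When you run it with $E=\{u>N^{k+1}\}$ and try to take $F=\{u>N^k\}$ inside the fixed window $B_{1/4}\times[-1,-2^{-\alpha}]$, the second hypothesis fails: the successor $\bar Q^m$ of a dense cylinder $Q$ near $t=-2^{-\alpha}$ can stick out of the window. You therefore do \emph{not} get the clean recursion $|E_{k+1}|\le(1-c)|E_k|$; instead you get
\[
|A_{k+1}|\le(1-\mu)\bigl(|A_k|+Cm N^{-\gamma\alpha k}\bigr),
\]
with an additive error measuring the spill-over. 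The paper controls this error by a second use of the barrier (Lemma~\ref{l:onlysmallcylinders}): any cylinder $Q_\rho$ on which $\{u>N^{k+1}\}$ has density at least $1-\delta_2$ must satisfy $\rho\le CN^{-\gamma k}$, since otherwise the barrier would push too much positivity forward into $Q_{1/4}$ and violate $\inf_{Q_{1/4}}u\le 1$. This bounds the time-length of any offending $\bar Q^m$ by $CmN^{-\gamma\alpha k}$, whence the error term above, and the recursion still yields geometric decay of $|A_k|$. So the missing ingredient is not a more clever covering, but one more application of the barrier to force the dense cylinders to be small.
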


Note that the $L^\epsilon$ norm of $u$ is computed in the cylinder $B_{1/4} \times [-1,-2^{-\alpha}]$. This cylinder lies earlier in time than the cylinder $Q_{1/2}$ where the infimum is taken in the right hand side of the inequality. This is natural due to the causality effect of parabolic equations. What should be noted in this case is that, due to the scaling of the equation, the size of these cylinders varies. Indeed, if $\alpha \in (1,2)$, then the time interval $[-1,-2^{-\alpha}]$ is longer than $1/2$ and certainly longer than $[-4^{-\alpha},0]$, which is the time span of $Q_{1/4}$. However, for small values of $\alpha$, the length of $[-1,-2^{-\alpha}]$ becomes arbitrarily small and the time span of $Q_{1/4}$ is almost one. We still have uniform choices of the constants $C$ and $\eps$ because of the assumption $\alpha \geq \alpha_0 >0$.

\begin{center}
\setlength{\unitlength}{1in} 
\begin{figure}[h!b]
        \centering
        \begin{subfigure}{2.5in}
        \begin{picture}(2.5,1)
\put(0,0){\includegraphics[height=1in]{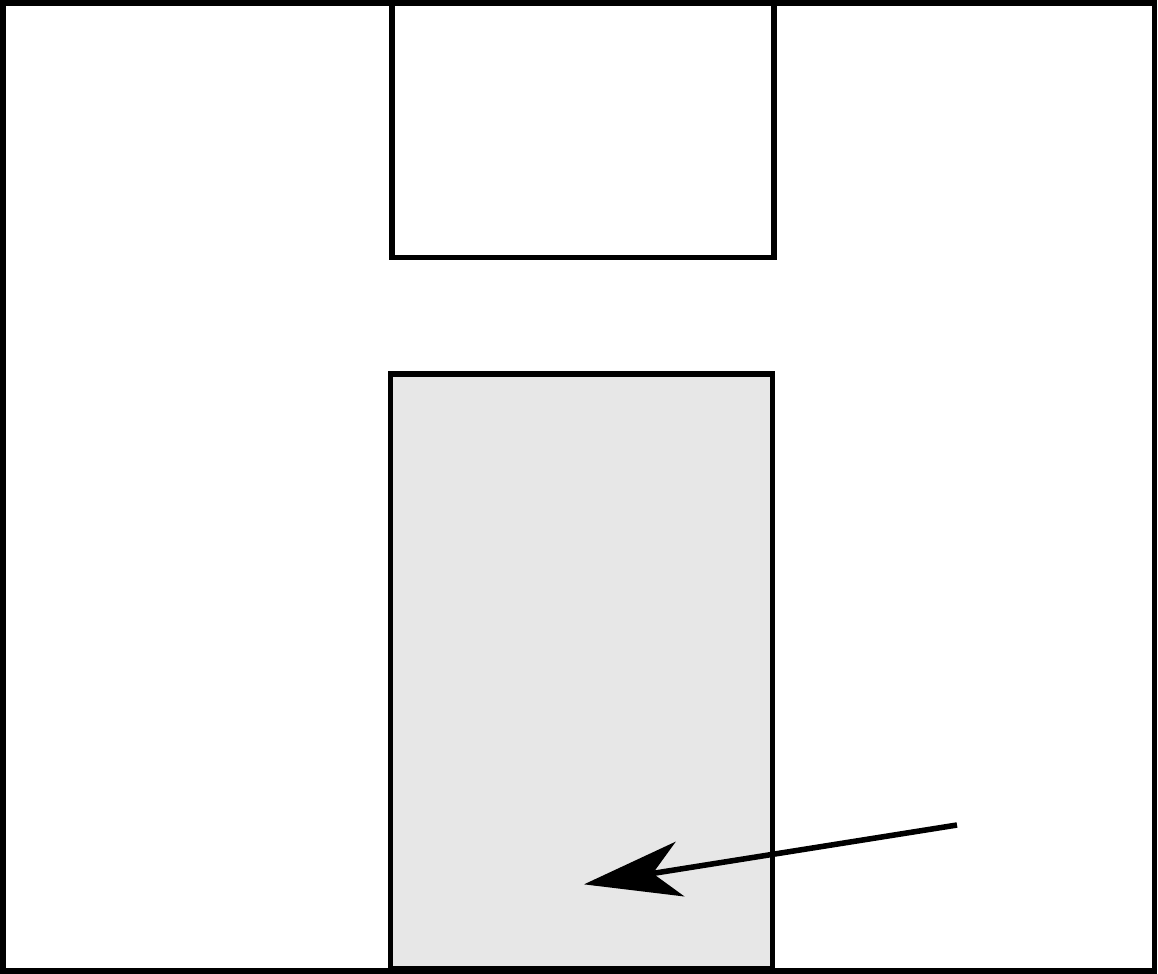}}
\put(0.0278,0.479){$Q_1$}
\put(0.417,0.806){$Q_{1/4}$}
\put(1,0.1){$B_{1/4} \times [-1,-2^{-\alpha}]$}
\end{picture}
\caption{Large $\alpha$}
\end{subfigure}
\begin{subfigure}{2.5in}
\begin{picture}(2.5,1)
\put(0,0){\includegraphics[height=1in]{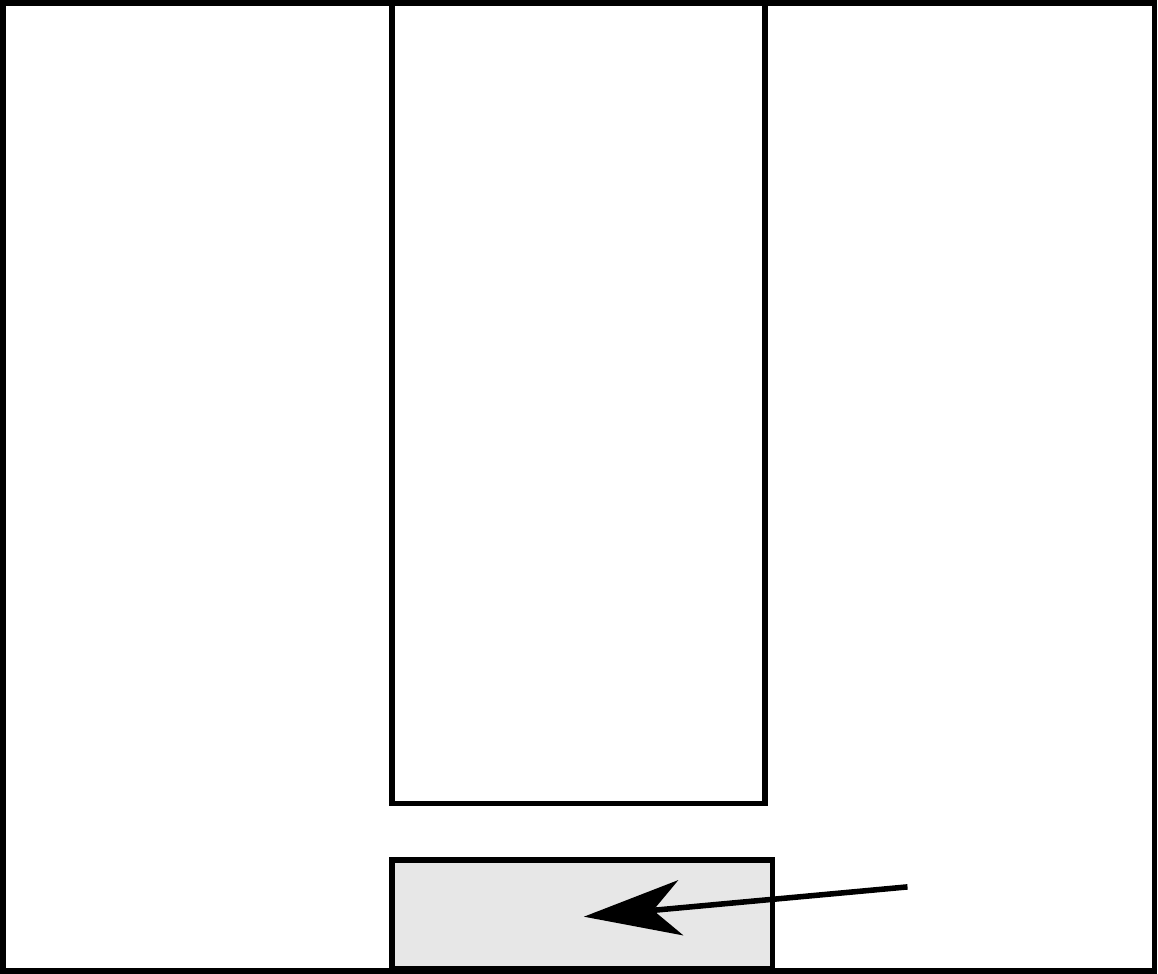}}
\put(0.95,0.06){$B_{1/4} \times [-1,-2^{-\alpha}]$}
\put(0.431,0.799){$Q_{1/4}$}
\put(0.0278,0.479){$Q_1$}
\end{picture}
\caption{Small $\alpha$}
\end{subfigure}
\end{figure}
\end{center}

The basic building block of this proof is Lemma \ref{l:growth-lemma}, which needs to be combined with Lemmas \ref{l:barrierAlBigger1} and \ref{l:barrierAlSmaller1} as well as a covering argument. Since the work of Krylov and Safonov \cite{krylov1980certain}, it is known that these ingredients lead to Theorem \ref{t:leps}. However, there are several ways to organize the proof and there are some subtleties that we want to point out. Thus, we describe the full proof explicitly. We start with some preparatory lemmas.

The following Lemma plays the role of Corollary 4.26 in \cite{imbert2013introduction}, which the reader can compare with Corollary 5.2 in \cite{lara2014regularity}.
Recall the notation $Q_r(x,t) = B_r(x) \times [t-r^\alpha,t]$. We now define a time shift of the cylinder $Q$, which we call $\bar Q^m$. For any positive number $m$, we write $\bar Q^m$ to denote
\[ \bar Q^m = B_r(x) \times (t,t+mr^\alpha).\]
The cylinder $\bar Q^m$ starts exactly where $Q$ ends. Moreover, its time span is enlarged by a factor $m$. Because of the order of causality, the information we have about the solution $u$ in $Q$ propagates to $\bar Q^m$. This is reflected in the following lemma.

\begin{lemma}[Stacked point estimate] \label{l:stacked}
Let $m$ be a positive integer. There exist $\delta_2>0$ and $N>0$ depending only on $\lambda$, $\Lambda$, $d$, $\alpha_0$ and $m$ such that if the following holds for some cylinder $Q = Q_\rho(x_0,t_0) \subset Q_1$,
\begin{align}
u &\geq 0  \text{ in } \R^d \times [-1,0],\\
u_t + C_0 |\grad u| - M^- u &\geq 0 \text{ in } Q_1, \label{e:spe-equation} \\
\abs{ \{u \geq N\} \cap Q_\rho(x_0,t_0) } &\geq  (1-\delta_2) |Q_\rho|, \label{e:spe-measure}\\
B_{2\rho}(x_0) \times [t_0-\rho^\alpha,t_0+m \rho^\alpha] &\subset Q_1,
\end{align}
then $u \geq 1$ in $\bar Q^m = B_\rho(x_0) \times [t_0,t_0+m \rho^\alpha]$.
\end{lemma}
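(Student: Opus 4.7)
The plan is a two-stage argument combining Lemma~\ref{l:growth-lemma} with the barrier of Lemma~\ref{l:barrierAlBigger1} (when $\alpha\geq 1$) or Lemma~\ref{l:barrierAlSmaller1} (when $\alpha<1$).

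\emph{Stage 1: from measure to pointwise.}  First I would convert the measure hypothesis \eqref{e:spe-measure} into a pointwise lower bound on the sub-cylinder $Q_{\rho/4}(x_0,t_0)$.  Apply Lemma~\ref{l:growth-lemma} in contrapositive form to the rescaling $v(y,s)=(A_0/N)\,u(x_0+\rho y,\,t_0+\rho^\alpha s)$; this $v$ remains a nonnegative supersolution of the same type because $M^-$ scales correctly (Lemma~\ref{l:M+Properties}) and because $\alpha\geq\alpha_0$ together with $\rho\leq 1$ keeps the rescaled gradient coefficient $C_0\rho^{\alpha-1}$ under control (with $C_0=0$ when $\alpha<1$).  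Unscaled, the contrapositive reads: whenever $\abs{\{u\leq N\}\cap Q_\rho(x_0,t_0)}<\delta_0'\abs{Q_\rho(x_0,t_0)}$ for the dimensional constant $\delta_0'=\delta_0/\abs{B_1}$, one has $u\geq N/A_0$ throughout $Q_{\rho/4}(x_0,t_0)$.  Choosing $\delta_2<\delta_0'$ (and bridging $\{u<N\}$ versus $\{u\leq N\}$ by applying the above to $N-\eps$ and letting $\eps\downarrow 0$) turns the measure assumption into this pointwise inequality.

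\emph{Stage 2: propagation via the barrier.}  With $u\geq N/A_0$ on $Q_{\rho/4}(x_0,t_0)$, I apply the barrier with parameters $L=4\rho/3$, $r=3/16$, $\tau_0=t_0-(\rho/4)^\alpha$, setting
\[
\tilde p(x,t)=p\!\left(\tfrac{x-x_0}{L},\tfrac{t-\tau_0}{L^\alpha}\right).
\]
The relation $rL=\rho/4$ makes the barrier's bad-initial cylinder coincide exactly with $Q_{\rho/4}(x_0,t_0)$; the choice $T=m(3/4)^\alpha+r^\alpha$ places $\bar Q^m$ inside the good region $B_{3L/4}(x_0)\times[\tau_0+(rL)^\alpha,\tau_0+TL^\alpha]$; and the hypothesis $B_{2\rho}(x_0)\times[t_0-\rho^\alpha,t_0+m\rho^\alpha]\subset Q_1$ guarantees that the entire domain of $\tilde p$ lies inside $Q_1$.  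All boundary conditions for comparison hold: $\tilde p\leq 0$ outside $B_L(x_0)$ or at $t=\tau_0$ outside $B_{rL}(x_0)$ while $u\geq 0$ there, and $\tilde p\leq 1$ on the bad-initial cylinder $Q_{\rho/4}(x_0,t_0)$ while $u\geq N/A_0$ there from Stage~1.  Proposition~\ref{p:comparisonpple} then yields $u\geq(N/A_0)\tilde p$ on the whole barrier domain, so on $\bar Q^m$
\[
u\geq \frac{N}{A_0}\,\epsilon_0\, r^{q_0}\,e^{-C_5(T-r^\alpha)}.
\]
Choosing $N=A_0\,\epsilon_0^{-1}r^{-q_0}e^{C_5 T}$, which depends only on $\lambda,\Lambda,d,\alpha_0$ and on $m$ through $T$, forces $u\geq 1$ on $\bar Q^m$.

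\emph{The hardest part} is the geometric bookkeeping of the barrier rescaling: simultaneously enforcing (i) that the bad-initial cylinder of $\tilde p$ lies in the region where Stage~1 gives a pointwise lower bound, (ii) that the good region covers all of $\bar Q^m$, and (iii) that the full domain of $\tilde p$ lies in $Q_1$.  The specific choices $L=4\rho/3$ and $r=3/16$ meet all three constraints; only $T$ (and hence $N$) grows with $m$, which is permitted by the lemma statement.
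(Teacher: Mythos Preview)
Your proposal is correct and follows essentially the same two-stage strategy as the paper: apply the contrapositive of Lemma~\ref{l:growth-lemma} to the rescaled function to get $u\geq N/A_0$ on $Q_{\rho/4}(x_0,t_0)$, then compare with a rescaled copy of the barrier from Lemmas~\ref{l:barrierAlBigger1}/\ref{l:barrierAlSmaller1} to propagate forward in time and choose $N$ accordingly. Your treatment is in fact more careful than the paper's about the geometric bookkeeping of the barrier rescaling (the paper simply writes the final comparison inequality with $r=1/2$ and an unspecified constant $M$), but the underlying argument is identical.
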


\begin{figure}[hb]
\centering
\setlength{\unitlength}{1in} 
\begin{picture}(1.20833,1)
\put(0,0){\includegraphics[height=1in]{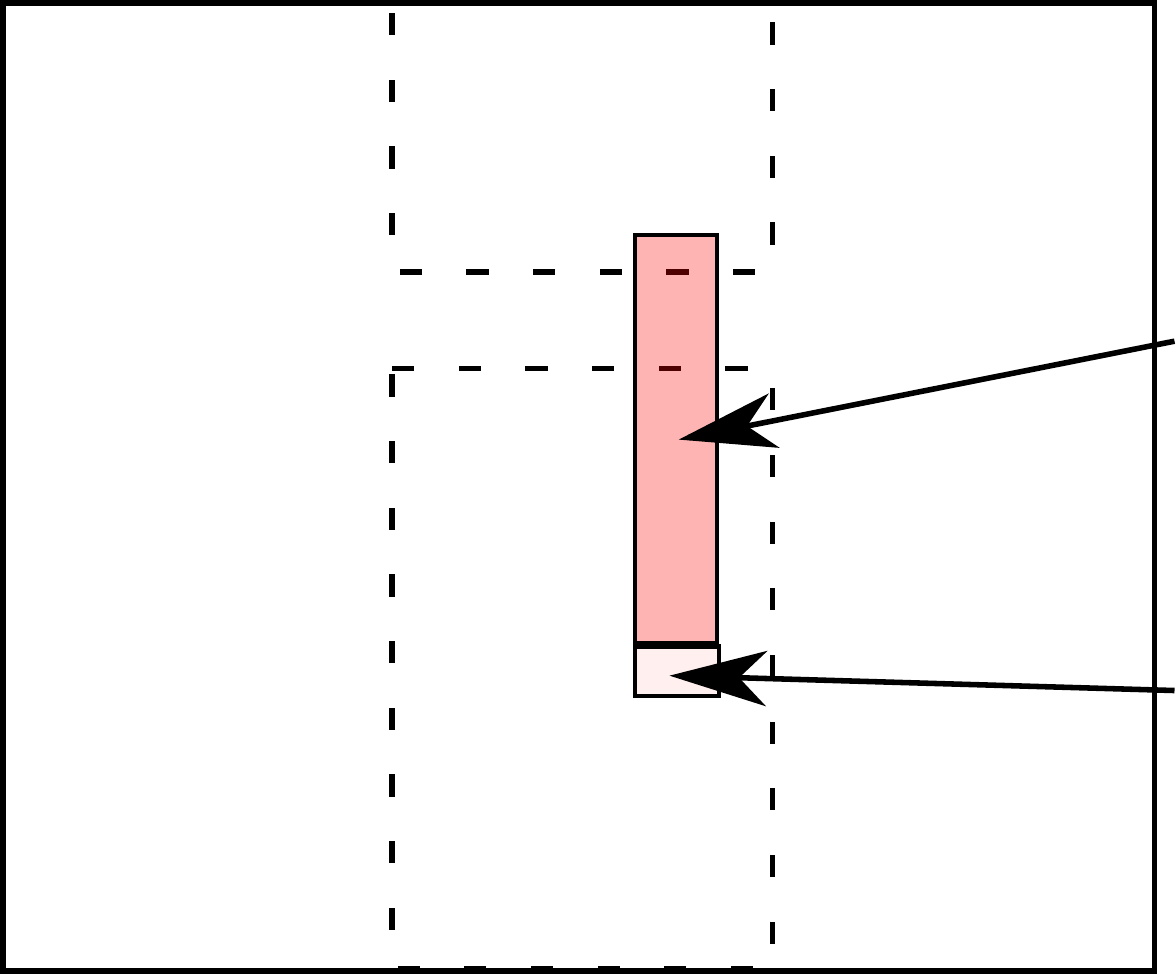}}
\put(1.22,0.229){$Q = Q_\rho(x_0,t_0)$}
\put(1.22,0.591){$\bar Q^m = B_{\rho}(x_0) \times [t_0,t_0+m\rho^\alpha]$}
\put(0.0278,0.479){$Q_1$}
\end{picture}
\caption{The cylinders involved in Lemma \ref{l:stacked}}
\end{figure}

\begin{proof}
Let $\tilde u$ be the scaled function
\[ \tilde u(x,t) = \frac{A_0}{N} u(\rho x + x_0,\rho^\alpha t+t_0),\]
where $A_0$ is the constant from Lemma \ref{l:growth-lemma}.

This function satisfies the same equation \eqref{e:spe-equation}. From our assumption \eqref{e:spe-measure}, we have that $|\{\tilde u > A_0\} \cap Q_1| \geq (1-\delta_2) |Q_1|$. Applying the contrapositive of Lemma \ref{l:growth-lemma}, we obtain that $\tilde u \geq 1$ in $Q_{1/4}$. Thus,
\[ u \geq \frac{N}{A_0}\text{ in } Q_{\rho/4}(x_0,t_0).\]

Recall that $u$ is a supersolution in $Q_1$ and $u \geq 0$ everywhere.  We apply Lemmas \ref{l:barrierAlBigger1} or \ref{l:barrierAlSmaller1} with $r=1/2$ to obtain the subsolution, $p$, and we can compare the functions $\tilde u$ and $p$.  Writing this in terms of $u$ gives 
\[ u(x,t) \geq \frac NM p\left(\frac{(x-x_0)}\rho, \frac{(t-t_0+(\rho/4)^\alpha)}{\rho^\alpha}\right).\]
The conclusion follows from taking $N$ large enough, combined with the lower bound for $p$ given in Lemma \ref{l:barrierAlBigger1}.  
\end{proof}

The point of the previous lemma is that it can be combined with the \emph{crawling ink spots} theorem. This is a covering argument which can be used as an alternative to the Calderon-Zygmund decomposition, and it is close to the original argument by Krylov and Safonov in \cite{krylov1980certain}. It has the cosmetic advantage that it does not use cubes but only balls. Moreover, the Calderon-Zygmund decomposition uses that we can tile the space with cubes, which is only true for $\alpha = 1$. In \cite{lara2014regularity}, this difficulty is overcome by a special tiling with variable scaling which is explained by the beginning of section 4.2. It is a cumbersome construction to define rigorously. The use of the crawling ink spots theorem completely avoids this difficulty.

\begin{thm} [The crawling ink spots] \label{t:ink-spots}
Let $E \subset F \subset B_{1/2} \times \R$. We make the following two assumptions.
\begin{itemize}
\item For every point $(x,t) \in F$, there exists a cylinder $Q \subset B_1 \times \R$ so that $(x,t) \in Q$ and $|E \cap Q| \leq (1-\mu) |Q|$.
\item For every cylinder $Q \subset B_1 \times \R$ such that $|E \cap Q| > (1-\mu) |Q|$, we have $\bar Q^m \subset F$.
\end{itemize}
Then 
\[ |E| \leq \frac{m+1} m (1-c \mu) |F|.\]
Here $c$ is an absolute constant depending on dimension only.
\end{thm}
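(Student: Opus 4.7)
The plan is to adapt the classical Krylov--Safonov ``growing ink spots'' strategy to the parabolic cylinder setting. Heuristically: cover $E$ by a disjoint family of parabolic cylinders on which $E$ has density strictly greater than $1-\mu$, invoke the second hypothesis to stack these cylinders forward in time into subsets of $F$, and compare volumes. The ratio $(m+1)/m$ encodes the gain from the $m$-fold time thickening of $\bar Q^m$ relative to its base $Q$, while the $(1-c\mu)$ loss is extracted from the strictness of the density inequality.

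I would first construct the covering. By Lebesgue differentiation, almost every $(x,t) \in E$ satisfies $|E \cap Q_r(x,t)|/|Q_r(x,t)| \to 1$ as $r \to 0$, so the family
\[ \mathcal{G} = \{Q \subset B_1 \times \R : |E\cap Q| > (1-\mu)|Q|\} \]
covers $E$ up to a null set. Because $E \subset F$, the first hypothesis supplies for each $(x,t) \in E$ some cylinder through $(x,t)$ that is \emph{not} in $\mathcal G$, so the cylinders of $\mathcal G$ through a given point have a largest admissible radius. A parabolic Vitali selection then yields a countable pairwise disjoint subfamily $\{Q_i\} \subset \mathcal G$ with $E \subset \bigcup_i c_0 Q_i$ up to a null set, for a universal dimensional dilation constant $c_0$.

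Next I would combine the covering with the second hypothesis. Each $\bar Q_i^m \subset F$, and $Q_i \cap \bar Q_i^m = \emptyset$ by construction, so $|Q_i \cup \bar Q_i^m| = (m+1)|Q_i|$. The disjointness of the bases limits how much the stacks $\bar Q_i^m$ can overlap one another: by a standard bounded-multiplicity argument, each time slice of $F$ is crossed by only $O(1)$ stacks, yielding a lower bound $|F| \geq c_1 (m+1) \sum_i |Q_i|$. Pairing this with the upper estimate $|E| \leq \sum_i |E \cap c_0 Q_i| \leq c_0^{d+\alpha} \sum_i |Q_i|$ and with the strict density inequality $|E\cap Q_i|>(1-\mu)|Q_i|$ --- which forces $|Q_i\setminus E| \geq \mu|Q_i|$ to live in $F \setminus E$ --- extracts the $(1-c\mu)$ factor and assembles the stated bound after rearrangement.

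The main obstacle is pinning down the precise constant. Balancing the Vitali enlargement (which inflates the upper bound on $|E|$) against the bounded-multiplicity losses on the stacks (which deflate the lower bound on $|F|$) so that the combined constant in front of $|F|$ comes out as exactly $\frac{m+1}{m}(1-c\mu)$ --- rather than a merely asymptotic version as $m \to \infty$ --- requires careful bookkeeping. An alternative route is a parabolic dyadic stopping-time decomposition in the Calder\'on--Zygmund spirit, which eliminates the Vitali enlargement at the cost of constructing parabolic dyadic cylinders; this is precisely the complication the authors cite as motivation for using the crawling ink spots formulation instead.
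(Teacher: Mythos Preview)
Your outline has two genuine gaps.

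First, the bounded-multiplicity claim for the stacks is false. Even when the bases $Q_i$ are pairwise disjoint, the time-shifted cylinders $\bar Q_i^m$ can overlap with multiplicity of order $m$: take $Q_k = B_r \times ((k-1)r^\alpha, k r^\alpha]$ for $k=1,\dots,N$; the bases are disjoint, but the stacks $\bar Q_k^m = B_r \times (k r^\alpha, (k+m)r^\alpha)$ all cover the slice $\{t = m r^\alpha\}$ for $k=1,\dots,m$. So from disjointness of the $Q_i$ you only get $|F| \geq |\bigcup \bar Q_i^m| \geq \sum |Q_i|$, not the $(m+1)\sum|Q_i|$ you claim, and the sharp factor $(m+1)/m$ is lost. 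The paper handles this by not working with a Vitali subfamily at all for this step: it sets $G = \bigcup_{Q \in \mathcal Q} Q$ (the union over \emph{all} high-density cylinders), and then compares $|G|$ to $|\bigcup_{Q \in \mathcal Q} \bar Q^m| \leq |F|$ via Fubini in time plus a one-dimensional interval lemma (for any family of intervals $(a_k,a_k+(m+1)h_k)$, the union of the top parts $(a_k+h_k,a_k+(m+1)h_k)$ has at least $m/(m+1)$ of the measure of the full union). This 1D lemma is the missing idea that produces the exact $(m+1)/m$.

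Second, your extraction of the $(1-c\mu)$ factor is backwards. For $Q_i \in \mathcal G$ you have $|E\cap Q_i| > (1-\mu)|Q_i|$, hence $|Q_i\setminus E| < \mu|Q_i|$, not $\geq \mu|Q_i|$; and $Q_i$ need not lie in $F$ anyway (only $\bar Q_i^m$ does), so $Q_i\setminus E$ is not obviously in $F\setminus E$. The paper instead proves $|E| \leq (1-c\mu)|G|$ by a separate covering lemma: for each point of $G$ one finds a \emph{maximal} cylinder contained in $G$; by maximality (and the first hypothesis) this cylinder has $|E\cap Q| \leq (1-\mu)|Q|$, so $|Q\cap(G\setminus E)| \geq \mu|Q|$, and a Vitali selection then yields $|G\setminus E| \geq c\mu|G|$. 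In short, the $(1-c\mu)$ comes from covering $G$ by \emph{low}-density cylinders inside $G$, not from the high-density family you built.
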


The proof of Theorem \ref{t:ink-spots} will be presented in the appendix.  The crawling ink spots theorem is used with a value of $m$ sufficiently large so that $\frac{m+1} m (1-c \delta) < 1$. 
In order to prove the $L^\eps$ estimate, we would want to apply Theorem \ref{t:ink-spots} with 
\[E = \{u \geq N^{k+1}\} \cap B_{1/2} \cap (-1,-2^{-\alpha}) \text{ and } F = \{u \geq N^k\} \cap B_{1/2} \cap (-1,-2^{-\alpha}).\]
The problem is that the assumption of Theorem 4.4 is not implied by Lemma \ref{l:stacked} because there is no way to assure that $t+mr^\alpha \leq -2^{-\alpha}$. This is a difficulty which is non existent in the elliptic setting. Because of the time shift in all the point estimates, the conclusion of the crawling ink spots theorem may be spilling outside of the time interval $[-1,-2^{-\alpha}]$. There is no trivial workaround for this.

The purpose of the following lemma is to show that the cylinders $Q_\rho(x_0,\rho_0)$ which satisfy the condition of the crawling ink spots theorem are necessarily small, and consequently the amount of measure that \emph{leaks} outside the cylinder $B_{1/4} \times [-1,-2^{-\alpha}]$ will decay exponentially.

\begin{lemma} \label{l:onlysmallcylinders}
Assume that
\begin{align*}
\inf_{Q_{1/4}} u \leq 1, & \\
u \geq 0 & \text{ in } \R^d \times [-1,0],\\
u_t + C_0 |\grad u| - M^- u \geq 0 &\text{ in } Q_1, \\
\intertext{and that there is a cylinder $Q_\rho(x_0,t_0)$ such that}
Q_\rho(x_0,t_0) &\subset B_{1/4} \times [-1,-2^{-\alpha}],\\
\abs{ \{u \geq N \} \cap Q_\rho(x_0,t_0) } &\geq (1-\delta_2) |Q_\rho|
\end{align*}
Then $\rho < C \, N^{-\gamma}$ for some universal $\gamma>0$ and $C>0$.
\end{lemma}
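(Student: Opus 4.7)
The strategy is to iterate the barrier from Lemma~\ref{l:barrierAlBigger1} (or Lemma~\ref{l:barrierAlSmaller1} when $\alpha<1$) starting from the pointwise lower bound yielded by the contrapositive of Lemma~\ref{l:growth-lemma}, inflating the spatial ball and advancing in time at each step, until we reach $Q_{1/4}$ with a propagated lower bound that exceeds $1$---contradicting $\inf_{Q_{1/4}} u \leq 1$ unless $\rho$ is polynomially small in $N$.

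The first step turns the density hypothesis into a pointwise bound. Provided $\delta_2 < \delta_0$ (which we may arrange by taking $\delta_2$ smaller in Lemma~\ref{l:stacked}), the hypothesis gives $|\{u < N\}\cap Q_\rho(x_0,t_0)| < \delta_0|Q_\rho|$, and applying the contrapositive of Lemma~\ref{l:growth-lemma} to the rescaling $\tilde u(y,s) = (A_0/N)\,u(\rho y+x_0,\rho^\alpha s+t_0)$ yields the pointwise bound $u \geq N/A_0$ on the sub-cylinder $Q_{\rho/4}(x_0,t_0)$. This is the initial data for the iteration.

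Next, fix the barrier parameter $r \in (0,3/4)$ (say $r=1/4$) and a time parameter $T > r^\alpha$, and define the universal constants $\mu := \varepsilon_0 r^{q_0} e^{-C_5(T-r^\alpha)} \in (0,1)$ and $\kappa := 3/(4r) > 1$. Define inductively the radii $\rho_k = \kappa^k\rho$, amplitudes $M_k = (N/A_0)\mu^k$, and times $t_{k+1} = t_k + T\rho_k^\alpha$. At each step I would compare $u$ with $M_k$ times the barrier of Lemma~\ref{l:barrierAlBigger1} rescaled to spatial scale $\rho_k$ around $x_0$ and time-shifted to start at $t_k$: the lateral condition holds since the barrier is nonpositive outside $B_1$, and the initial condition at $t=t_k$ holds because the previous iteration's pointwise bound $u \geq M_k$ is in force on the launch region $B_{r\rho_k}(x_0) = B_{3\rho_{k-1}/4}(x_0)$. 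By the comparison principle (Proposition~\ref{p:comparisonpple}), the barrier's lower-bound region (equation~\eqref{e:barrier4}) then yields $u \geq M_{k+1}$ on $B_{3\rho_k/4}(x_0)$ at times in $[t_k+r^\alpha\rho_k^\alpha,\,t_{k+1}]$.

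Iterate until the spatial ball covers $B_{1/4}$ (i.e., $\rho_k\geq 1/2$, using $x_0\in B_{1/4}$) and the accumulated time window covers the slab $[-4^{-\alpha},0]$; both constraints can be simultaneously satisfied after $k_{\max} \sim \log(1/\rho)/\log\kappa$ iterations by choosing $T$ universally so that $\sum_{j\leq k_{\max}} T\rho_j^\alpha$ covers the required window. Setting $\gamma := \log\kappa/\log(1/\mu) > 0$, the terminal lower bound is $M_{k_{\max}} = (N/A_0)\mu^{k_{\max}} \geq c\,(N/A_0)\,\rho^{1/\gamma}$. If $\rho > CN^{-\gamma}$ for a suitable universal $C$, then $M_{k_{\max}}>1$ at every point of $Q_{1/4}$, contradicting $\inf_{Q_{1/4}}u\leq 1$; hence $\rho\leq CN^{-\gamma}$. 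The main obstacle is the geometric bookkeeping to ensure that all the nested cylinders remain inside $Q_1$, that the launch region of each barrier step sits inside the previously-inflated ball so that the initial-time comparison is justified, and that the accumulated time advancement lands on---rather than overshoots---the slab $[-4^{-\alpha},0]$; none of these require new analytic ideas, but the universal constants $r$, $T$, and $\delta_2$ must be chosen consistently.
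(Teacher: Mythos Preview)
Your approach is correct in spirit but takes a longer route than the paper, and there is one bookkeeping slip worth flagging.

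\textbf{Comparison with the paper.} The paper does \emph{not} iterate: it applies the barrier of Lemma~\ref{l:barrierAlBigger1} (or~\ref{l:barrierAlSmaller1}) exactly once, with the small parameter $r=\rho/3$ and a spatial rescaling by $4/3$ so that the barrier's unit ball becomes $B_{3/4}(x_0)\subset B_1$. The point is that \eqref{e:barrier4} already gives a lower bound of the form $\eps_0 r^{q_0}e^{-C_5(T-r^\alpha)}$ on the \emph{large} ball $B_{3/4}$ for all times in $[r^\alpha,T]$, with $\eps_0,q_0,C_5$ independent of $r$. Since $Q_{1/4}$ sits in this region (after the $4/3$ rescaling and time shift), one reads off $1\geq\inf_{Q_{1/4}}u\geq c(N/A_0)\rho^{q_0}$ directly, giving $\rho\leq CN^{-1/q_0}$. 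Your iteration re-derives this polynomial decay by chaining fixed-ratio inflations; it is valid but redundant, because the barrier lemma already packages the ``small initial ball $\to$ large final ball with $r^{q_0}$ loss'' in a single statement. What your approach would buy is modularity---it only needs the barrier for one fixed $r$---but that is not needed here since Lemma~\ref{l:barrierAlBigger1} is stated for all $r\in(0,1)$.

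\textbf{The slip.} In your chain, the barrier at step $k$ requires $u\geq M_k$ not just at the initial \emph{time} $t_k$ but on the entire excised space--time hole $B_{r\rho_k}(x_0)\times(t_k,\,t_k+(r\rho_k)^\alpha]$ (this is where \eqref{e:barrier2} is used). With your definition $t_{k+1}=t_k+T\rho_k^\alpha$, step $k$'s lower bound from \eqref{e:barrier4} holds only up to time $t_{k+1}$, so the hole for step $k{+}1$, which is $(t_{k+1},\,t_{k+1}+(r\rho_{k+1})^\alpha]$, lies \emph{after} the interval where you control $u$. The fix is easy---start each barrier slightly earlier so the holes overlap the previous output interval, which forces the mild constraint $T>r^\alpha(\kappa^\alpha+1)$---but your phrase ``initial-time comparison'' suggests you may have missed that the comparison must hold on a time \emph{slab}, not a time slice.
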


\begin{proof}
Applying Lemma \ref{l:growth-lemma} rescaled to $Q_\rho(x_0,t_0)$, we obtain that $u \geq N/M$ in $Q_{\rho/4}(x_0,t_0)$.  Just as in the proof of Lemma \ref{l:stacked}, we get
\[ u(x,t) \geq \frac N M p\left( \frac 43 (x-x_0), \left(\frac 43 \right)^\alpha (t-t_0+(\rho/4)^\alpha) \right),\]
where $p$ is the function from Lemmas \ref{l:barrierAlBigger1} or \ref{l:barrierAlSmaller1} with $r = \rho/3$. The reason for the factor $4/3$ is that since $x_0 \in B_{1/4}$, we know that $B_{3/4}(x_0) \subset B_1$.

We have that $x_0 \in B_{1/4}$, $t_0 \in [-1,-2^{-\alpha}]$ and $\rho \leq \min(1/4, (1-2^{-\alpha})^{1/\alpha})$.
Since $\inf_{Q_{1/4}} u \leq 1$, then
\[ \frac MN \geq \inf \left\{ p(x,t) : x \in B_{2/3} \wedge t \in [(3^{-\alpha} (2^\alpha-1) + (\rho/3)^\alpha , (4/3)^\alpha + (\rho/3)^\alpha] \right\} \geq c \rho^q,\]
which holds by (\ref{e:barrier4}) in Lemmas \ref{l:barrierAlBigger1} and \ref{l:barrierAlSmaller1}.
Therefore $\rho < C N^{-\gamma}$, where $\gamma = 1/q$ and $q$ is the exponent from Lemmas \ref{l:barrierAlBigger1} or \ref{l:barrierAlSmaller1}.

\end{proof}

\begin{proof}[Proof of Theorem \ref{t:leps}]
We start by noting that we can assume $C=0$. Otherwise we consider $\tilde u(x,t) = u(x,t) - C t$ instead.
For every positive integer $k$, let \[A_k := \{u > N^k\} \cap (B_{1/4} \times (-1,-2^{-\alpha})).\]
where $N$ is the constant from Lemma \ref{l:stacked}.
We apply Theorem \ref{t:ink-spots} with
\[E = \{u \geq N^{k+1}\} \cap (B_{1/4} \times (-1,-2^{-\alpha})) \text{ and } F = \{u \geq N^k\} \cap (B_{1/4} \times (-1,-2^{-\alpha} + Cm N^{-\gamma \alpha k})),\]
where $C$ and $\gamma$ are the constants from Lemma \ref{l:onlysmallcylinders}.

Let us verify that both assumptions of Theorem \ref{t:ink-spots} are verified. 
The first assumption in Theorem \ref{t:ink-spots} is implied by Lemma \ref{l:onlysmallcylinders} (at least when $N$ and/or $k$ are large). Indeed, any point $(x,t) \in B_{1/4}\times  (-1,-2^{-\alpha} + m N^{-\gamma \alpha k})$ is contained in some cylinder $Q_r(x_0,t_0)$ with large enough $\rho$ so that $\rho > C N^{-k\gamma}$. 
Because of Lemma \ref{l:stacked}, whenever there is a cylinder $Q$ such that $|A_{k+1} \cap Q| \geq (1-\delta) |Q|$, then $\bar Q^m \subset \{ u > N^k\}$. Moreover, because of Lemma \ref{l:onlysmallcylinders}, then the length in time of $\bar Q^m$ is less than $mC N^{-\gamma k}$. Therefore $\bar Q^m \subset F$. Thus, the second assumption of Theorem \ref{t:ink-spots} holds as well.

Note that we allow the result of the crawling ink spots theorem to spill to the time interval $[-2^{-\alpha},-2^{-\alpha}+Cm N^{-\gamma \alpha k}]$.  Therefore,
\[ |A_{k+1}| \leq \frac{m+1} m (1-c \delta) \left( |A_k| + C m N^{-\gamma \alpha k} \right).\]
We fist pick $m$ sufficiently large so that
\[ \frac{m+1} m (1-c \delta) := 1-\mu < 1.\]
Thus, we have
\[ |A_{k+1}| \leq (1-\mu) \left( |A_k| + C m N^{-\gamma \alpha k} \right).\]
This already implies an exponential decay on $|A_k|$, which proves the theorem.
\end{proof}


\section{H\"older continuity of solutions}\label{sec:Holder}

We first state a H\"older continuity for parabolic integral equations without drift. In this case $\alpha \in (0,2)$ can be arbitrarily small, although the estimates depend on its lower bound $\alpha_0$.

\begin{thm}[H\"older estimates without drift] \label{t:holder-no-drift}
Assume $\alpha \geq \alpha_0 > 0$. Let $u$ be a bounded function in $\R^d\times [-1,0]$ such that
\begin{align*}
u_t - M^+ u \leq C &\text{ in } Q_1, \\
u_t - M^- u \geq -C &\text{ in } Q_1.
\end{align*}
then there are constants $C_7$ and $\gamma$, depending on $n$, $\lambda$, $\Lambda$ and $\alpha_0$, such that
\[ \| u \|_{C^\gamma(Q_{1/2})} \leq C_7 \left( \|u\|_{L^\infty(\R^d \times [-1,0])} + C \right).\]
\end{thm}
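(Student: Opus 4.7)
The plan is to derive Hölder regularity by iterating an oscillation-decay estimate produced by the weak Harnack inequality (Theorem \ref{t:leps}), in the classical Krylov--Safonov scheme.  By linearity of the hypotheses in $u$, I would scale so that $\|u\|_{L^\infty(\R^d\times[-1,0])}\leq 1/2$ and $C\leq \eta$ for a small universal constant $\eta$ to be determined.  It then suffices to construct nested sequences $m_k\leq M_k$ with $M_k-m_k=\theta^k$ and $m_k\leq u\leq M_k$ on $Q_{r^k}$, for universal constants $\theta\in(0,1)$ and $r\in(0,1)$.  The existence of such sequences is equivalent to the desired estimate with $\gamma=\log(1/\theta)/\log(1/r)$.

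For the inductive step I would rescale by setting
\[
v(x,t)\;:=\;\theta^{-k}\Bigl(u(r^k x,\,r^{k\alpha}t)-\tfrac{m_k+M_k}{2}\Bigr),
\]
so that $|v|\leq 1$ on $Q_1$.  Collating the inductive hypotheses at all levels $j\leq k$ yields the global tail estimate $|v(x,t)|\leq C(1+|x|)^{\gamma}$ on the rescaled time interval, which is precisely the reason for the choice $\theta=r^{\gamma}$.  The scale invariance of the extremal operators (Lemma \ref{l:M+Properties}) then implies that $v$ satisfies the same pair of extremal inequalities on $Q_1$ with source bounded by $r^{k(\alpha-\gamma)}C\leq \eta$, uniformly in $k$ as long as $\gamma<\alpha_0\leq\alpha$.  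The inductive step thus reduces to showing $\osc_{Q_r}v\leq 2-c_\ast$ for some universal $c_\ast>0$.

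By volume dichotomy, on the past sub-cylinder $B_{1/4}\times[-1,-2^{-\alpha}]$, either $\{v\leq 0\}$ or $\{v\geq 0\}$ has measure at least $\tfrac12|Q|$.  Replacing $u$ by $-u$ (which swaps $M^+$ and $M^-$ and the two cases) assume the former.  Then $w:=1-v$ is nonnegative on $Q_1$ and is a viscosity supersolution of $w_t-M^-w\geq -C'$, but is not nonnegative globally.  To apply Theorem \ref{t:leps} I would use the truncation $\tilde w:=\max(w,0)$.  The introductory observation that $\delta_h u\geq \delta_h\varphi$ whenever $u\geq\varphi$ with equality at the touch-point shows that $\tilde w$ still satisfies the supersolution inequality in the viscosity sense on an interior sub-cylinder, up to a tail correction
\[
E(x_0,t_0)\;\leq\;C\int_{|h|>\mathrm{dist}(x_0,\partial B_1)}(v-1)_+(x_0+h,t_0)\,K(h)\,dh,
\]
which by (A2) and the polynomial tail bound on $v$ is controlled by a convergent dyadic series, finite provided $\gamma<\alpha_0\leq\alpha$.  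On $\{v\leq 0\}$ one has $\tilde w\geq 1$, so $\|\tilde w\|_{L^\eps}\geq c_0>0$, and Theorem \ref{t:leps} yields $\inf_{Q_{1/4}}\tilde w\geq c_\ast>0$ provided $C'+E<c_0/(2C_6)$.  This forces $v\leq 1-c_\ast$ on $Q_{1/4}$ and closes the induction with $\theta=1-c_\ast/2$ and $r$ chosen small enough that $Q_{r^{k+1}}\subset Q_{r^k/4}$.

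The hard part is the parameter balancing.  The tail error $E$ is a universal quantity depending on $\gamma$, $\alpha_0$, and $\Lambda$, while the constants $c_0,C_6$ from the weak Harnack are also universal, and the inequality $C'+E<c_0/(2C_6)$ must be forced by choosing $\gamma$ (equivalently $\theta,r$) together with $\eta$ appropriately; the factor $(2-\alpha)$ inherited from (A2) is crucial in keeping $E$ small as $\alpha\to 2$, and the lower bound $\alpha\geq\alpha_0$ keeps the dyadic series $\sum_{j\geq 0}r^{j(\alpha-\gamma)}$ convergent uniformly in $\alpha$.  The case $\alpha<1$ requires no separate treatment, since the hypotheses of both Theorem \ref{t:leps} and Theorem \ref{t:holder-no-drift} drop the drift term in that regime.
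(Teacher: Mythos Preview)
Your proposal is correct and follows essentially the same route as the paper's proof: normalize, run an inductive oscillation--decay at geometric scales, rescale, perform the dichotomy on the past sub-cylinder, truncate to obtain a globally nonnegative supersolution, control the truncation error via the accumulated oscillation bounds together with (A2), and close with Theorem~\ref{t:leps}. The only cosmetic differences are that the paper fixes the scale ratio to be $8$ and organizes the tail estimate by first cutting at a large radius $R$ (to exploit the uniform bound $\|\tilde u\|_{L^\infty}\le 1/4$) and then sending $\gamma\to 0$ on $B_R\setminus B_2$, rather than summing a single dyadic series; your remark about the $(2-\alpha)$ factor is relevant for robustness as $\alpha\to 2$ but is not itself the mechanism that makes the tail error small---that comes from $(|h|^\gamma-1)\to 0$ as $\gamma\to 0$.
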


We can also include a drift term in the equation when $\alpha \geq 1$. This is stated in the next result.

\begin{thm}[H\"older estimates with drift] \label{t:holder-with-drift}
Assume $\alpha \geq 1$. Let $u$ be a bounded function in $\R^d \times [-1,0]$ such that
\begin{align*}
u_t - C_0 |\grad u| - M^+ u \leq C &\text{ in } Q_1, \\
u_t + C_0 |\grad u| - M^- u \geq -C &\text{ in } Q_1.
\end{align*}
then there are constants $C_7$ and $\gamma$, depending on $n$, $\lambda$, $\Lambda$, $C_0$, such that
\[ \| u \|_{C^\gamma(Q_{1/2})} \leq C_7 \left( \|u\|_{L^\infty(\R^d \times [-1,0])} + C \right).\]
\end{thm}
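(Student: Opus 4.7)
The plan is to deduce H\"older continuity from the weak Harnack inequality (Theorem \ref{t:leps}) by the classical oscillation-decay iteration. After a linear rescaling I may normalize to $\|u\|_{L^\infty(\R^d \times [-1,0])} + C \leq 1$. The heart of the argument is a \emph{diminish-of-oscillation} statement: there exist universal constants $\theta \in (0,1)$ and $r_0 \in (0,1/2)$ such that if $u$ satisfies the two Pucci inequalities on $Q_1$ with $|u|\leq 1$ globally, then $\osc_{Q_{r_0}} u \leq \theta$ up to an error of order $C$. Given this, I will iterate to produce monotone sequences $m_k \nearrow$ and $M_k \searrow$ with $M_k - m_k \leq 2\theta^k$ and $m_k \leq u \leq M_k$ on $Q_{r_0^k}$, which yields a modulus of continuity at the origin with exponent $\gamma = \log\theta/\log r_0$. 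Translation invariance of the Pucci inequalities (Lemma \ref{l:M+Properties}) then gives the estimate at every point of $Q_{1/2}$.

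For the oscillation-decay lemma, set $M = \sup_{Q_1} u$ and $m = \inf_{Q_1} u$ and split according to the measure of
\[
E := \{u \leq (M+m)/2\} \cap \big(B_{1/4}\times[-1,-2^{-\al}]\big).
\]
If $|E| \geq \tfrac12 |B_{1/4}\times[-1,-2^{-\al}]|$, I would apply Theorem \ref{t:leps} to $w = M - u$: it is nonnegative on $Q_1$ and satisfies $w_t + C_0|\grad w| - M^- w \geq -C$, and the measure condition on $E$ forces $w \geq (M-m)/2$ on a definite-measure subset of $B_{1/4}\times[-1,-2^{-\al}]$. The left-hand side of Theorem \ref{t:leps} is thus bounded below by a universal multiple of $(M-m)$, yielding $\inf_{Q_{1/4}} w \geq \eta(M-m) - \tilde C \cdot C$ and hence $\osc_{Q_{1/4}} u \leq (1-\eta)(M-m) + \tilde C \cdot C$. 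The complementary case is symmetric, applying Theorem \ref{t:leps} to $u - m$ instead. This gives the lemma with $r_0 = 1/4$ and $\theta = 1 - \eta$ (plus lower-order error in $C$).

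For the iteration step, at stage $k$ consider the rescaled function $\tilde u(x,t) := \theta^{-k}\big(u(r_0^k x, r_0^{k\al} t) - (M_k + m_k)/2\big)$. By the scaling and rotation invariance of Lemma \ref{l:M+Properties}, $\tilde u$ satisfies the rescaled Pucci inequalities with drift constant $r_0^{k(\al-1)} C_0 \leq C_0$ (which is where $\al \geq 1$ enters) and right-hand side $r_0^{k\al}\theta^{-k} C$. The induction hypothesis gives $|\tilde u| \leq 1$ on $Q_1$, but globally $\tilde u$ is controlled only by $\theta^{-k}\|u\|_{L^\infty}$, which grows with $k$. This global growth is nevertheless offset when applying the oscillation lemma because the relevant kernel tails $\int_{\R^d \setminus B_R} K(h)\dd h$ decay like $R^{-\al}$ (Lemma \ref{l:KernelIntegrationBounds}); choosing $\theta$ close enough to $1$ so that $\theta > r_0^\al$ makes the resulting geometric series summable. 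This closes the induction.

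The main technical obstacle is precisely this tail bookkeeping. When one replaces $u - m_k$ by its positive part $(u - m_k)^+$ in order to apply Theorem \ref{t:leps} (which requires global nonnegativity), an extra term of the form $\int (u(x+h) - m_k)^- K(h)\dd h$ appears on the right-hand side, contributing a tail error that must be summed across all previous scales $j < k$. Keeping this error strictly below $\eta(M_k - m_k)$ at every step --- not merely at a single step --- is what forces the joint constraint between $\theta$ and $r_0$ and is the algebraic core of the induction. Once these constants are selected universally, the H\"older estimate of Theorem \ref{t:holder-with-drift} follows directly.
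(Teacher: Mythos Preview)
Your strategy is the same as the paper's --- normalize, prove oscillation decay via the weak Harnack inequality, and iterate --- but your organization has two weaknesses that the paper's version handles more cleanly.

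First, the oscillation-decay lemma as you state it is not correct: you apply Theorem \ref{t:leps} to $w = M - u$ with $M = \sup_{Q_1} u$, noting that $w \geq 0$ on $Q_1$. But Theorem \ref{t:leps} requires $w \geq 0$ on all of $\R^d \times [-1,0]$, and $u$ may well exceed $M$ outside $B_1$. You are aware of this (your last paragraph discusses replacing by the positive part), but separating the ``clean'' lemma from the tail bookkeeping means the lemma as stated is false. The paper instead truncates immediately, setting $v = (\tilde u - \max_{Q_2}\tilde u + 1)^+$, and carries the truncation error through from the start.

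Second, your parameter logic is tangled. You fix $r_0 = 1/4$ and $\theta = 1-\eta$ from the oscillation lemma, then later write ``choosing $\theta$ close enough to $1$'' to control the tail. But $\theta$ is not free; it is dictated by the weak Harnack constant. Mere summability of $\sum (r_0^\alpha/\theta)^m$ is also not enough --- you need the sum to be strictly smaller than $\eta$, which forces $r_0$ small, not $\theta$ large. The paper avoids this circularity by taking as inductive hypothesis $\osc_{Q_r} u \leq 2r^\gamma$ for \emph{all} $r \geq 8^{-k}$ (not just dyadic $r$). After rescaling, this gives the pointwise bound $v(x,t) - (\tilde u(x,t) - \max\tilde u + 1) \leq (|x|/2)^\gamma - 1$ for $|x|>2$, and the resulting tail error $c(\gamma)$ in the equation for $v$ tends to $0$ as $\gamma \to 0$. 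Now $\gamma$ is the single free parameter, chosen small at the end so that both $c(\gamma) < \eta/2$ and $8^{-2\gamma} \geq 1-\delta$. This packaging makes the interdependence of constants transparent.
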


The proofs of these two theorems are essentially the same. The only difference is that when $\alpha \geq 1$ we can include a non zero drift term in Theorem \ref{t:leps}. Because of this, we write the proof only once, for Theorem \ref{t:holder-with-drift}, which applies to both theorems.

\begin{proof}[Proof of Theorem \ref{t:holder-with-drift}]
We start by observing that we can reduce to the case $C \leq \eps_0$ and $\|u\|_{L^\infty} \leq 1/2$ by considering the function
\[ \frac 1 {C/\eps_0 + 2 \|u\|_{L^\infty}} u(x,t).\]
We choose $\eps_0$ sufficiently small, which will be specified below.

Our objective is to prove that for some $\gamma > 0$, which will also be specified below, 
\begin{equation} \label{e:holderaim}  \osc_{Q_r} u \leq 2 r^\gamma,
\end{equation}
for all $r \in (0,1)$. This proves the desired modulus of continuity at the point $(0,0)$. Since there is nothing special about the origin, we obtain the result of the theorem at every point in $Q_{1/2}$ using a standard scaling and translation argument.
Note that since $\|u\|_{L^\infty} \leq 1/2$, we know a priori that \eqref{e:holderaim} holds for all $r < 2^{-1/\gamma}$. We can make this threshold arbitrarily small by choosing a small value of $\gamma$.

In order to prove that \eqref{e:holderaim} holds for all values of $r \in (0,1)$, we use induction. We assume that it holds for all $r \geq 8^{-k}$ and we show that it then holds for all $r \geq 8^{-(k+1)}$. Because of the observation in the previous paragraph, we can guarantee this inequality for the first few values of $k$ by choosing a small value of $\gamma$. Thus, we are left to prove the inductive step.

Let
\[ \tilde u(x,t) = \frac 12 \ \frac 1 { 8^{\gamma(k-1)}} u \left( \frac{8^{-(k-1)}} 2 x, \frac{8^{-\alpha(k-1)}} {2^\alpha} t \right).\]
This function $\tilde u$ is a scaled version of $u$ so that the values of $\tilde u$ in $Q_2$ correspond to the values of $u$ in $Q_{8^{-k+1}}$. Moreover, since \eqref{e:holderaim} holds for $r \geq 8^{-k}$, we have that
\begin{equation} \label{e:al1}
	\osc_{Q_{2r}} \tilde u \leq \min(r^\gamma,1),  
\end{equation}
for all $r \geq 1/8$. 

Since $\osc_{Q_2} \tilde u \leq 1$, then for all $(x,t) \in Q_2$, we have that $\tilde u(x,t) \geq \max_{Q_2} \tilde u - 1/2$ or $\tilde u(x,t) \leq \min_{Q_2} \tilde u + 1/2$. There may be points where both inequalities hold. The important thing is that at least one of the two inequalities holds at every point $(x,t) \in Q_2$. Therefore, one of the two inequalities will hold in at least half of the points (in measure) of the cylinder $B_{1/4} \times [-1,-2^{-\alpha}]$. Without loss of generality, let us assume it is the first of these inequalities the one which holds for most points (a similar argument works otherwise). That is, we have
\[ \left\{ \tilde u \geq \max_{Q_2} \tilde u - \frac 12 \right\} \cap \left( B_{1/4} \times [-1,-2^{-\alpha}] \right) \geq \frac 12 |B_{1/4}| \times (1-2^{-\alpha}).\]

Let $v$ be the truncated function
\[ v(x,t) := \left( \tilde u(x,t) - \max_{Q_2} \tilde u + 1 \right)^+.\]
Note that $v \geq 0$ everywhere and $v = 
\tilde u(x,t) - \max_{Q_2} \tilde u + 1$ in $Q_2$. If $x \notin B_2$ and $t \in [-1,0]$, it can happen that $v(x,t) > \tilde u(x,t) - \max_{Q_2} \tilde u + 1$. We can estimate their difference using \eqref{e:al1}.
\begin{equation} \label{e:al2}  
	v(x,t) - \left(\tilde u(x,t) - \max_{Q_2} \tilde u + 1\right) \leq \osc_{B_{|x|}\times[-1,0]} \tilde u - 1 \leq \left(\frac{|x|}2 \right)^\gamma - 1, \ \text{ for any } x \notin B_2, \ t \in [-1,0].
\end{equation}
Note that for any fixed $R$, the right hand side converges to zero uniformly for $2\leq \abs{x}\leq R$ as $\gamma \to 0$.

Inside $Q_1$, the function $v$ satisfies the following equation
\begin{align*}
v_t + C_0 |\grad v| - M^- v  
&\geq \tilde u_t + C_0\abs{\grad \tilde u} - M^- \tilde u + M^-(\tilde u - v)     \\
&\geq -\eps_0 + M^-(\tilde u - v)\\
&= -\eps_0 + M^-( (\tilde u - \max \tilde u +1) - v)\\
&\geq -\eps_0 - c(\gamma).
\end{align*}
Here $c(\gamma) = -\min_{Q_1} M^-((\tilde u - \max \tilde u +1)-v) = \max_{Q_1} M^+(v-(\tilde u - \max \tilde u +1))$. We can estimate $c(\gamma)$ using \eqref{e:al2} and assumption (A2), because 
\begin{align}
	L(v-(\tilde u - \max \tilde u +1))(x) &= \int_{\R^d} \del_h \left( v-(\tilde u - \max \tilde u +1) \right)(x) K(h)\dd h
	\nonumber \\
	&= \int_{\abs{h}\geq 2} \left( v-(\tilde u - \max \tilde u +1) \right)(h) K(h)\dd h \nonumber \\
	&\leq C\int_{2\leq \abs{h}\leq R} (\abs{h}^\gam - 1) K(h)\dd h + \int_{\abs{h}\geq R} 2\norm{\tilde u}_{L^\infty} K(h) \dd h,
	\label{e:al3}
\end{align}
where we note the use of the fact that $v-(\tilde u - \max \tilde u +1)\equiv0$ and also $\grad (v-(\tilde u - \max \tilde u +1)) \equiv 0 $ in $Q_2$.  Thus given any $\rho$, we can make $c(\gam)<\rho$ by first choosing $R$ large enough so that the tails of $K$ are negligible outside of $B_{R}$-- hence controlling the second term of (\ref{e:al3})-- and then choosing $\gam$ small enough so that second term of (\ref{e:al3}) is small enough.  Since none of these choices depend upon the kernel, $K$, they hold for $M^+$, and hence $c(\gam)$, as well.

Applying Theorem \ref{t:leps}, 
\begin{align*}
\min_{Q_{1/4}} v + \eps_0 + c(\gamma) &\geq \frac{ 1}{ C_6} \left( \int_{B_{1/4} \times [-1,-2^{-\alpha}]} v^\eps \dx \dd t \right)^{1/\eps}, \\
&\geq \frac{1}{C_6} \left( \frac 12 |B_{1/4}| (1-2^{-\alpha}) \right)^{1/\eps} \frac 12.
\end{align*}
Let us choose $\eps_0>0$ and $\gamma>0$ sufficiently small so that
\[ \delta := \frac{1}{C_6} \left( \frac 12 |B_{1/4}| (1-2^{-\alpha}) \right)^{1/\eps} \frac 12 - \eps_0 - c(\gamma) > 0.\]
Therefore, we obtained $\min_{Q_{1/4}} v \geq \delta$, which implies that $\osc_{Q_{1/4}} \tilde u \leq 1-\delta$. In term of the original variables, this means that 
\[ \osc_{Q_{8^{-k}}} u \leq 2 \times 8^{-\gamma (k-1)} (1-\delta).\]
Consequently, for any $r \in (8^{-k-1},8^{-k})$, 
\[ \osc_{Q_r} u \leq 2 \times 8^{-\gamma (k-1)} (1-\delta).\]
Choosing $\gamma$ sufficiently small so that
\[ 8^{-2\gamma} \geq (1-\delta),\]
implies that \eqref{e:holderaim} holds for all $r > 2^{-k-1}$. This finishes the inductive step, and hence the proof.

Note that there is no circular dependence between the constants $\gamma$ and $\eps_0$. All conditions required in the proof are satisfied for any smaller value. We choose $\eps_0$ and $\gamma$ sufficiently small so that all these conditions are met.
\end{proof}


\section{$C^{1,\gam}$ regularity for nonlinear equations} 

\label{s:nonlinear}

It is by now standard that a H\"older regularity result as in Theorem \ref{thm:intro} for kernels $K$ which have rough dependence in $x$ and $t$ implies a $C^{1,\alpha}$ estimate for solutions to nonlinear equations. The following is a more precise statement.

\begin{thm} \label{t:c1a}
Assume $\alpha_0>1$, $\alpha \in [\alpha_0,2]$ and $I$ is a translation invariant nonlocal operator which is uniformly elliptic with respect to the class of kernels that satisfy (A1), (A2), (A3) and (A4). Let $u: \R^n \times [-T,0] \to \R$ be a bounded viscosity solution of the following equation
\[ u_t - Iu = f \text{ in } B_1 \times [-T,0].\]
Then $u(\cdot,t) \in C^{1+\gamma}(B_{1/2})$ for all $t \in [-T/2,0]$ and $u(x,\cdot) \in C^{(1+\gamma)/2}([-T/2,0])$ for all $x \in B_{1/2}$. Moreover, the following regularity estimate holds,
\[ \sup_{t \in [-T/2,0]} \|u(\cdot,t)\|_{C^{1+\gamma}(B_{1/2})} + \sup_{x \in B_{1/2}} \|u(x,\cdot)\|_{C^{(1+\gamma)/2}([-T/2,0])} \leq C \left( \|u\|_{L^\infty(\R^n\times [-T,0])} + \|f\|_{L^\infty(B_1 \times [-T,0])} + I0 \right).\]
The constants $C$ and $\gamma$ depend only on $\lambda$, $\Lambda$, $\mu$, $n$ and $\alpha_0$. Here $\gamma>0$ is the minimum between $\alpha_0-1$ and the constant $\gamma$ from Theorem \ref{thm:intro} (or Theorem \ref{t:holder-with-drift}). 
\end{thm}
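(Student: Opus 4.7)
The plan is a standard bootstrap from the H\"older estimate in Theorem \ref{t:holder-with-drift} (applicable because $\alpha \geq \alpha_0 > 1$) to $C^{1+\gamma}$ regularity for the nonlinear equation. Two structural facts do the work. \emph{Translation invariance} of $I$ implies that for every $(\xi, \tau)$, $u(\cdot + \xi, \cdot + \tau)$ is again a solution, with $f$ shifted accordingly. \emph{Uniform ellipticity} with respect to the class (Definition \ref{d:ellipticity}) implies that the difference $w = u_1 - u_2$ of two solutions satisfies the extremal inequalities
\[
w_t - M^+ w - C_0 |\grad w| \leq |f_1 - f_2|, \qquad w_t - M^- w + C_0 |\grad w| \geq -|f_1 - f_2|,
\]
which are precisely the hypotheses of Theorem \ref{t:holder-with-drift}. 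The same observation applied to $u - 0$ shows that $u$ itself satisfies these extremal inequalities with right-hand side $\|f\|_{L^\infty} + |I0|$.

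First I would bootstrap from H\"older to Lipschitz in space. Theorem \ref{t:holder-with-drift} applied to $u$ gives $u \in C^{\gamma_0}(Q_{1/2})$ in the parabolic distance, with $\gamma_0$ the exponent of that theorem. Applying it now to each difference $w^{\xi,\tau}(x,t) := u(x+\xi, t+\tau) - u(x, t)$, whose $L^\infty$-norm is controlled by $[u]_{C^{\gamma_0}}(|\xi| + |\tau|^{1/\alpha})^{\gamma_0}$, yields a uniform H\"older estimate on the family $\{w^{\xi,\tau}\}$. By the standard equivalence between the $C^\beta$-seminorm and the decay rate of incremental quotients, this upgrades $u$ to $C^{2\gamma_0}$ parabolically. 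Iterating finitely many times, $u \in C^\beta$ for every $\beta < \alpha$; in particular $u$ is Lipschitz in $x$ and $C^{1/\alpha}$ in $t$, with constants depending only on $\|u\|_{L^\infty}$, $\|f\|_{L^\infty}$ and $|I0|$.

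For the $C^{1+\gamma}$ estimate, the key extra observation is that when $\alpha > 1$ any affine function $\ell(x) = a + p \cdot x$ has $\delta_h \ell \equiv 0$, so $M^\pm \ell \equiv 0$; hence subtracting an affine function from a solution preserves the extremal inequalities up to the bounded drift $C_0|p|$. One then establishes an improvement-of-flatness: there exist universal $\rho_0, \theta \in (0,1)$, $\gamma \in (0, \min(\gamma_0, \alpha_0-1))$ and $\epsilon_0 > 0$ such that if $\|u\|_{L^\infty(Q_1)} \leq 1$ and $\|f\|_{L^\infty} + |I0| \leq \epsilon_0$, there is an affine $\ell$ with $|a| + |p| \leq C$ and
\[
\sup_{Q_{\rho_0}} |u - \ell| \leq \theta \rho_0^{1+\gamma}.
\]
This is obtained by applying Theorem \ref{t:holder-with-drift} to the first-order incremental quotients $v_e^\rho(x,t) := \rho^{-1}[u(x+\rho e, t) - u(x,t)]$, which by Step~1 are bounded uniformly in $\rho$ and satisfy the extremal inequalities by the two structural facts above. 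Iterating the improvement of flatness on dyadic zooms, using translation invariance to re-center at any base point of $Q_{1/2}$ and the identity $M^\pm \ell \equiv 0$ to relaunch the hypothesis after each zoom, gives the claimed $C^{1+\gamma}$ modulus in space at every point of $Q_{1/2}$. The parabolic scaling $Q_r = B_r \times (-r^\alpha, 0]$ then automatically converts this into a time H\"older exponent of $(1+\gamma)/\alpha \geq (1+\gamma)/2$, since $\alpha \leq 2$.

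The main obstacle is the improvement-of-flatness step. The naive attempt to treat $v_e^\rho$ as satisfying a scale-invariant equation with bounded right-hand side fails, because incremental quotients of $f$ are not controlled in $L^\infty$ from $\|f\|_{L^\infty}$ alone. The resolution is to work with affine-subtracted incremental quotients and carefully track the $\rho$-dependence of every error term in the dyadic iteration: on the $k$-th rescaling the contribution coming from $f$ carries a prefactor $\rho_0^{k(\alpha - 1 - \gamma)}$, which is summable thanks to $\gamma < \alpha_0 - 1$ and can be absorbed into the smallness threshold $\epsilon_0$ at each step. This is the only genuinely nonlinear ingredient; everything else is a mechanical use of Theorem \ref{t:holder-with-drift} together with the translation invariance and ellipticity structure of $I$.
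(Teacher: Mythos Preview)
Your bootstrap sketch overlooks the central nonlocal difficulty: tail control. Theorem \ref{t:holder-with-drift} bounds $[u]_{C^\gamma(Q_{1/2})}$ by the \emph{global} quantity $\|u\|_{L^\infty(\R^d\times[-1,0])}$, not a local one. When you claim that the incremental quotient $w^{\xi,\tau}$ has $L^\infty$-norm controlled by $[u]_{C^{\gamma_0}}(|\xi|+|\tau|^{1/\alpha})^{\gamma_0}$, this is only true inside the cylinder where you already know $u\in C^{\gamma_0}$; for $x$ far from the origin, $|w^{\xi,\tau}(x,t)|$ is merely bounded by $2\|u\|_{L^\infty}$, with no decay in $|\xi|$. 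So applying Theorem \ref{t:holder-with-drift} to $w^{\xi,\tau}$ does \emph{not} give $[w^{\xi,\tau}]_{C^{\gamma_0}}\lesssim (|\xi|+|\tau|^{1/\alpha})^{\gamma_0}$, and the iteration to $C^{2\gamma_0}$ collapses. The same issue recurs in the improvement-of-flatness step: after subtracting $\ell_k$ and rescaling by $\rho_0^{-k(1+\gamma)}$, the rescaled function grows like $|x|^{1+\gamma}$ at infinity (this is all the inductive hypothesis gives), so neither it nor its incremental quotients are globally bounded, and Theorem \ref{t:holder-with-drift} as stated does not apply. The obstacle you single out (lack of control on incremental quotients of $f$) is secondary by comparison.

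The paper does not give a self-contained proof here; it invokes the argument of \cite{serra2014regularity}, which proceeds by a compactness/blow-up scheme together with a Liouville-type statement, precisely to circumvent the tail issue. The paper's only substantive remark is that in the present generality (assumption (A2) replacing a pointwise upper bound) the weighted $L^1$ tail norm used in \cite{serra2014regularity} must be replaced by a weighted $L^\infty$ growth control of the form $\sup_x (1+|x|)^{\eps-\alpha_0}|u(x)|$. This is exactly the tail-control ingredient your outline is missing. A direct bootstrap in the spirit of \cite{caffarelli2011regularity} can be made to work, but it requires first strengthening Theorem \ref{t:holder-with-drift} to functions with subcritical growth at infinity and then tracking that growth through every step of the iteration; you would need to supply that.
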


The proof of Theorem \ref{t:c1a} is given in \cite{serra2014regularity} for the smaller class of symmetric kernels satisfying \eqref{e:pointwise-bound-for-K}. The proof in \cite{serra2014regularity} uses the main result in \cite{lara2014regularity}. The proof of Theorem \ref{t:c1a} follows simply by replacing the use of the result of \cite{lara2014regularity}  in \cite{serra2014regularity} by Theorem \ref{t:holder-with-drift} in this paper. There is only one comment that needs to be made. In \cite{serra2014regularity}, the following quantity is used a few times to control the tail of an integral operator
\[ \|u\|_{L^1(\R^n,\omega_0)} := \int_{\R^n} u(x) (1+|x|)^{-n-\alpha_0} \dd x.\]
Because of our assumption \eqref{e:assumption-above}, this quantity is not sufficient and needs to be replaced by
\[ \max \left\{ x \in \R^n : (1+|x|)^{\eps-\alpha_0} u(x) \right\},\]
for some arbitrary small $\eps > 0$. After this small modification, the proof in \cite{serra2014regularity} straight forwardly applies to prove Theorem \ref{t:c1a} using Theorem \ref{t:holder-with-drift}.

The main example of a nonlinear integral operator $I$ is given by the Isaacs operator from stochastic games
\[ Iu(x) = \inf_i \sup_j \int_{\R^n} \delta_h u(x,t) K^{ij}(h) \dd h.\]
Here, the kernels $K^{ij}$ must satisfy the hypothesis (A1), (A2), (A3) and (A4) uniformly in $i$ and $j$.

The result can also be extended non translation invariant kernels $K^{ij}(x,h,t)$ provided that they are continuous with respect to $x$ and $t$. See \cite{serra2014regularity} for a discussion on this extension.


\appendix

\section{The crawling ink spots theorem}

In this section we prove a version of the \emph{crawling ink spots theorem} for fractional parabolic equations. This is a covering argument which first appeared in the original work of Krylov and Safonov \cite{krylov1979estimate}. In that paper it is indicated that the result was previously known by Landis, and it was Landis himself the one who came up with its suggestive name.

Let $d_\alpha$ be the parabolic distance of order $\alpha$. By definition, it is
\[ d_\alpha((x_0,t_0),(x_1,t_1)) = \max\left((2|t_1-t_2|)^{1/\alpha} , |x_1-x_2| \right).\]

The parabolic cylinders $Q_r(x,t)$ are balls of radius $r$ centered at $(x,t-r^\alpha/2)$ with respect to the distance $d_\alpha$. The importance of this characterization is that it allows us to use the Vitali covering lemma, since this result is valid in arbitrary metric spaces.

\begin{lemma} \label{l:inkspots}
Let $\mu>0$ and $E \subset F \subset B_1 \times \R$ be two open sets which satisfy the following two assumptions
\begin{itemize}
\item For every point $(x,t) \in F$, there exists a cylinder $Q \subset B_1 \times \R$ so that $(x,t) \in Q$ and $|E \cap Q| \leq (1-\mu) |Q|$.
\item For every cylinder $Q \subset B_1 \times \R$ such that $|E \cap Q| > (1-\mu) |Q|$, we have $Q \subset F$.
\end{itemize}
Then $|E| \leq (1-c\mu) |F|$, where $c$ is a constant depending on dimension only.
\end{lemma}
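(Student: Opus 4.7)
The plan is to prove the lemma via a Vitali covering argument in the parabolic metric $d_\alpha$. The two hypotheses pull in opposite directions: (H1) covers $F$ by cylinders of low $E$-density, while (H2) forces high-$E$-density cylinders to lie inside $F$. My strategy is to exploit a ``supremum of admissible radii'' to balance these two, so that after a Vitali selection, the enlarged cylinders are still contained in $F$.

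First I would, for each $(x,t) \in F$, introduce
\[ r(x,t) := \sup\bigl\{r>0 : \exists\, Q_r \subset B_1 \times \R \text{ with } (x,t) \in Q_r \text{ and } |E \cap Q_r| \leq (1-\mu)|Q_r|\bigr\}. \]
Hypothesis (H1) guarantees $r(x,t) > 0$, and the constraint $Q_r \subset B_1 \times \R$ keeps $r(x,t)$ bounded. I would then choose $\tilde Q_{(x,t)}$ containing $(x,t)$ and contained in $B_1 \times \R$, with radius $\rho_{(x,t)} \geq r(x,t)/2$ and satisfying $|E \cap \tilde Q_{(x,t)}| \leq (1-\mu)|\tilde Q_{(x,t)}|$. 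Since parabolic cylinders are $d_\alpha$-balls, Vitali's covering lemma produces a countable disjoint subcollection $\{Q_j\}$ of the $\tilde Q_{(x,t)}$ satisfying $\bigcup_j 5 Q_j \supset F$.

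The crux of the argument will be the claim that $5Q_j \subset F$ for every $j$. Writing $Q_j = \tilde Q_{(x_j,t_j)}$ with radius $\rho_j$, one notes $(x_j,t_j) \in Q_j \subset 5Q_j$ and that the radius of $5Q_j$ equals $5\rho_j \geq 5 r(x_j,t_j)/2 > r(x_j,t_j)$. Were it the case that $|E \cap 5 Q_j| \leq (1-\mu)|5 Q_j|$, then $5Q_j$ would itself be a legitimate competitor in the supremum defining $r(x_j,t_j)$, giving the contradiction $r(x_j,t_j) \geq 5\rho_j > r(x_j,t_j)$. Hence $|E \cap 5Q_j| > (1-\mu)|5Q_j|$, and (H2) then delivers $5Q_j \subset F$. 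The hard part will be the pedantic technicality when $5Q_j$ leaves $B_1 \times \R$, in which case (H2) is not directly applicable; this can be patched by first applying the argument to $F \cap (B_{1-\eps} \times \R)$ and sending $\eps \to 0$, or by substituting a smaller Vitali expansion factor, without affecting the dimensional nature of the final constant.

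With the claim in hand the conclusion reduces to a volume comparison. Each $Q_j \subset F$ satisfies $|Q_j \setminus E| \geq \mu|Q_j|$, and the $Q_j$'s are disjoint, so
\[ |F \setminus E| \geq \sum_j |Q_j \setminus E| \geq \mu \sum_j |Q_j|. \]
Using that for parabolic cylinders $|5Q_j| = 5^{d+\alpha}|Q_j|$, one gets $|F| \leq \sum_j |5Q_j| = 5^{d+\alpha} \sum_j |Q_j|$, whence $\sum_j |Q_j| \geq 5^{-(d+\alpha)}|F|$. Combining these yields $|F\setminus E| \geq c\mu|F|$ with $c = 5^{-(d+2)}$ (a dimensional constant, since $\alpha \in (0,2)$), and therefore $|E| \leq (1-c\mu)|F|$.
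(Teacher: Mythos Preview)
Your overall architecture---cover $F$ by low-$E$-density cylinders, extract a Vitali subfamily, and compare volumes---matches the paper's. The final computation is also the same. The genuine difference is in how you secure the key inclusion $Q_j \subset F$, and this is where your argument has a gap.

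You obtain $Q_j \subset F$ indirectly, by first arguing that $5Q_j$ has high $E$-density (since its radius exceeds $r(x_j,t_j)$) and then invoking (H2) to get $5Q_j \subset F$. But (H2) applies only to cylinders contained in $B_1 \times \R$, and there is no reason $5Q_j$ stays inside $B_1$ spatially. Your proposed patches do not close this: restricting to $F \cap (B_{1-\eps}\times\R)$ does not help because $r(x,t)$ is \emph{not} controlled by $\mathrm{dist}(x,\partial B_1)$---a point near $\partial B_1$ can still lie in a low-density cylinder of radius close to $1$ centered near the origin, so $5Q_j$ may have radius of order $1$ regardless of $\eps$. And the Vitali expansion factor is not a free parameter you can shrink.

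The paper sidesteps this entirely by reversing the order of the two properties. For each $(x,t)\in F$ it first takes the cylinder $Q^0$ supplied by (H1), and then selects $Q^{(x,t)}$ \emph{maximal among cylinders containing $(x,t)$ and contained in $Q^0 \cap F$}. Thus $Q^{(x,t)}\subset F$ by construction. The low-$E$-density of $Q^{(x,t)}$ is then deduced: either $Q^{(x,t)}=Q^0$ (so it inherits low density from (H1)), or any slightly larger cylinder $Q$ with $Q^{(x,t)}\subsetneq Q\subset Q^0$ fails to be contained in $F$, whence the \emph{contrapositive} of (H2) gives $|E\cap Q|\le(1-\mu)|Q|$, and one passes to the limit. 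After Vitali, the paper only uses $Q_j\subset F$ (already built in) and $\bigcup 5Q_j\supset F$ (from the covering); it never needs $5Q_j$ to sit inside anything. This is the cleaner route, and it is what you should adopt in place of your ``supremum of admissible radii'' device.
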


\begin{proof}
For every point $(x,t) \in F$, let $Q^0$ be the cylinder such that $(x,t) \in Q^0$ and $|E \cap Q^0| < (1-\mu) |Q^0|$.

Recall that $F$ is an open set. Let us choose a maximal cylinder $Q^{(x,t)}$ such that $(x,t) \in Q^{(x,t)}$, $Q^{(x,t)} \subset Q^0$ and $Q^{(x,t)} \subset F$. Two things may happen, either $Q^{(x,t)} = Q^0$, in which case $|Q^{(x,t)} \cap E| < (1-\mu) |Q^{(x,t)}|$ or for any larger cylinder $Q^{(x,t)} \subset Q \subset Q^0$ we would have $Q \not \subset F$. In the latter case we would have $|E \cap Q| \leq (1-\mu)|Q|$ for \textbf{any} cylinder $Q$ so that $Q^{(x,t)} \subset Q \subset Q^0$. In particular, the inequality holds for a decreasing sequence converging to $Q^{(x,t)}$ and therefore $|E \cap Q^{(x,t)} | \leq (1-\mu) |Q^{(x,t)}|$.

In any case, we have constructed a cover $Q^{(x,t)}$ of the set $F$ so that for all $(x,t) \in F$,
\begin{itemize}
\item $(x,t) \in Q^{(x,t)}$.
\item $Q^{(x,t)} \subset F$.
\item $|Q^{(x,t)} \cap E| \leq (1-\mu) |Q^{(x,t)}|$.
\end{itemize}

Using the Vitali covering lemma, we can select a countable subcollection of cylinders $Q_j$ such that $F \subset \bigcup_{j=1}^\infty 5Q_j$. Here each $Q_j$ is one of the cylinders $Q^{(x,t)}$. We write $5Q_j$ to denote the cylinder expanded as a ball with respect to the metric $d_\alpha$ with the same center and five times the radius.

Since $Q_j \subset F$ and $|E \cap Q_j| \leq (1-\mu) |Q_j|$, then $|Q_j \cap (F\setminus E)| \geq \mu |Q_j|$. Therefore,
\begin{align*}
|F \setminus E| &\geq \sum_{j=1}^\infty |Q_j \cap (F \setminus E)|, \\
&\geq \sum_{j=1}^\infty \mu |Q_j|,\\
= 5^{-d-\alpha} \mu \sum_{j=1}^\infty |5 Q_j| &\geq 5^{-d-\alpha} \mu |F|.
\end{align*}
The lemma follows with $c = 5^{-d-\alpha}$. 
\end{proof}

Lemma \ref{l:inkspots} is not applicable directly for parabolic equations. What we need is a covering lemma so that if $|E \cap Q| \geq (1-\mu) |Q|$, then a time-shift of the cylinder $Q$ is included in $F$ instead of $Q$ itself. This time-shift is given by the cylinders $\bar Q^m$ which we defined in section \ref{s:Leps}.

We now give the proof of the \emph{crawling ink spots} theorem.

\begin{proof}[Proof of Theorem \ref{t:ink-spots}]
Let $\mathcal Q$ be the collection of cylinders $Q \subset B_1 \times \R$ such that $|E \cap Q| > (1-\mu) |Q|$. Let $G = \bigcup_{Q \in \mathcal Q} Q$. By construction, $E$ and $G$ satisfy the assumptions of Lemma \ref{l:inkspots}, thus $|E| \leq (1-c\mu) |G|$. In order to prove this theorem, we are left to show that $|G| \leq (m+1)/m |F|$. For that, we will see that
\begin{align*}
\left\vert \bigcup_{Q \in \mathcal Q} \bar Q^m \right\vert &\geq \frac {m}{m+1} \left\vert \bigcup_{Q \in \mathcal Q} Q \cup \bar Q^m \right\vert \\
&\geq \frac {m}{m+1} |G|.
\end{align*}
The second inequality above is trivial by the inclusion of the sets. The first inequality is not obvious since the cylinders may overlap. We justify this first inequality below.

From Fubini's theorem, the measure of any set $A \in B_1 \times \R$ is given by
\[ |A| = \int_{B_1} \mathcal L_1( A \cap (\{x\} \times \R) ) \dd x,\]
where $\mathcal L_1$ stands for the one dimensional Lebesgue measure.

We finish the proof applying Fubini's theorem and noticing that for all $x \in B_1$,
\[ \mathcal L_1 \left( \bigcup_{Q \in \mathcal Q} \bar Q^m \cap (\{x\} \times \R)  \right) \geq \frac {m}{m+1} \mathcal L_1 \left(\bigcup_{Q \in \mathcal Q} \left( Q \cup \bar Q^m\right) \cap (\{x\} \times \R)  \right)
\]
This inequality follows from Lemma \ref{lem:cover-1d} which is described below.
\end{proof}

The following lemma is copied directly from Lemma 2.4.25 in \cite{imbert2013introduction}. An elementary proof is given there, which is independent of the rest of the text.

\begin{lemma}\label{lem:cover-1d}
Consider two (possibly infinite) sequences of real numbers
$(a_k)_{k=1}^N$ and $(h_k)_{k=1}^N$ for $N \in \mathbb N \cup \{\infty\}$ with
$h_k >0$ for $k =1,\dots,N$. Then
\[
\left|\cup_{k=1}^N (a_k, a_k + (m+1)h_k) \right| \le \frac{m}{m+1}
\left|\cup_{k=1}^N (a_k+h_k, a_k+ (m+1)h_k)\right|.
\]
\end{lemma}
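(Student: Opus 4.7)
The plan is to prove this one-dimensional measure inequality by reducing to the finite case and then arguing by induction on $N$. Write $J_k := (a_k, a_k + (m+1)h_k)$ and $R_k := (a_k + h_k, a_k + (m+1)h_k)$, so that $R_k \subset J_k$ and $|R_k| = \frac{m}{m+1}|J_k|$. When the $J_k$'s are pairwise disjoint the desired inequality is automatic with equality, so the real content is to show that overlaps among the $J_k$'s cannot make the ratio $|\cup R_k|/|\cup J_k|$ worse than $m/(m+1)$.

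The passage from $N = \infty$ to finite $N$ is routine: both unions are countable unions of open intervals, and by monotone convergence applied to an exhausting sequence of finite subcollections it suffices to prove the inequality for every finite $N$. For the finite case I would proceed by induction on $N$. The base case $N=1$ is a direct computation giving the ratio $m/(m+1)$. For the inductive step I would re-index so that $J_N$ has the largest right endpoint, denoted $b_N := a_N + (m+1)h_N$, apply the inductive hypothesis to the remaining $N-1$ intervals to control $U' := \cup_{k<N} J_k$ versus $V' := \cup_{k<N} R_k$, and then account for the incremental contributions of $J_N$ and $R_N$ to $U$ and $V$.

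The key point that makes the induction close is that because $b_N$ is maximal, the tail $(b^*, b_N)$ of $J_N$ is disjoint from $U'$, where $b^* := \max_{k<N} b_k$. Splitting into two sub-cases based on whether $a_N + h_N$ lies to the right or to the left of $b^*$, one checks directly that the incremental contributions $|J_N \setminus U'|$ and $|R_N \setminus V'|$ preserve the ratio $m/(m+1)$: in the first sub-case $R_N \cap (-\infty, b^*] = \emptyset$ so $R_N$ contributes its full length $m h_N$ while $J_N$ adds at most $(m+1) h_N$; in the second sub-case the tail $(b^*, b_N) \subset R_N \setminus V'$ has length exactly matching the new contribution to $U$, and combining with the inductive hypothesis closes the estimate. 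The main obstacle is purely book-keeping: it is easy to make an off-by-one slip when comparing the four reference points $a_N$, $a_N + h_N$, $b^*$, and $b_N$, so the case analysis must be laid out carefully, but each sub-case reduces to a short arithmetic check leveraging the identity $|R_k| = \frac{m}{m+1}|J_k|$.
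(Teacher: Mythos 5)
Your base case, reduction to finite $N$, and sub-case~1 of the induction all work (and you correctly read past a sign typo in the printed statement: as written the inequality could only hold for null sets; the intended claim, which you set out to prove, is $|\cup_k R_k| \geq \tfrac{m}{m+1}|\cup_k J_k|$). The gap is in sub-case~2. You assert that the tail $(b^*,b_N)$ ``has length exactly matching the new contribution to $U$,'' but $J_N\setminus U'$ is not just $(b^*,b_N)$: it also picks up every portion of $(a_N,b^*)$ left uncovered by $U'$, and that extra portion can dominate. The incremental inequality $|R_N\setminus V'| \geq \tfrac{m}{m+1}|J_N\setminus U'|$, which is what you would need to add to the inductive hypothesis $|V'|\geq\tfrac{m}{m+1}|U'|$ to close the induction, is in fact false in this sub-case.

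A concrete counterexample with $m=1$: take $J_1=(5,9)$, $J_2=(4,8)$, $J_3=(3,7)$, $J_4=(0,10)$, so $R_1=(7,9)$, $R_2=(6,8)$, $R_3=(5,7)$, $R_4=(5,10)$. Peeling off $J_4$ (largest right endpoint $b_4=10$; here $b^*=9$ and $a_4+h_4=5<b^*$, so this is your sub-case~2) gives $U'=(3,9)$, $V'=(5,9)$, and then $|R_4\setminus V'| = |(9,10)| = 1$ while $\tfrac12|J_4\setminus U'| = \tfrac12|(0,3)\cup(9,10)| = 2$. The lemma still holds overall ($|V|=5=\tfrac12|U|=\tfrac12\cdot 10$), but only because the inductive hypothesis held with a unit of slack ($|V'|=4$ vs.\ $\tfrac12|U'|=3$) that exactly compensates the deficit; your inductive step does not track that slack, so the argument does not close. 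Repairing it requires either a strengthened invariant that records the slack, or a different decomposition altogether --- e.g.\ a sweeping/chain argument within each connected component of $\cup_k J_k$, which is closer in spirit to the proof in \cite[Lemma 2.4.25]{imbert2013introduction} to which the paper defers in lieu of giving one of its own.
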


\bibliographystyle{plain}
\bibliography{pie}
\index{Bibliography@\emph{Bibliography}}%

\end{document}